\documentclass[12pt]{amsart}
\usepackage{amssymb,times}

\makeatletter
\@namedef{subjclassname@2010}{%
  \textup{2010} Mathematics Subject Classification}
\makeatother

\frenchspacing

\textwidth=13.5cm
\textheight=23cm
\parindent=16pt
\oddsidemargin=-0.5cm
\evensidemargin=-0.5cm
\topmargin=-0.5cm

\parindent=0in
\parskip=\medskipamount

\newtheorem{Thm}{Theorem}
\newtheorem{theorem}[Thm]{Theorem}
\newtheorem{Cor}[Thm]{Corollary}
\newtheorem{Prop}[Thm]{Proposition}
\newtheorem{Lemma}[Thm]{Lemma}
\newtheorem{lemma}[Thm]{Lemma}

\theoremstyle{definition}
\newtheorem{Defn}{Definition}
\newtheorem{mydef}[Defn]{Definition}
\newtheorem{Remark}{Remark}

\newcommand{\mf}[1]{\mathbb{#1}}
\newcommand{\mc}[1]{\mathcal{#1}}
\newcommand{\mb}[1]{\mathbf{#1}}

\DeclareMathOperator{\NC}{\mathrm{NC}}
\DeclareMathOperator{\Part}{\mathcal{P}}
\DeclareMathOperator{\Int}{\mathit{Int}}

\newcommand{\norm}[1]{\left\Vert#1\right\Vert}
\newcommand{\abs}[1]{\left\vert#1\right\vert}
\newcommand{\chf}[1]{\mathbf{1}_{#1}}
\newcommand{\set}[1]{\left\{#1\right\}}
\renewcommand{\phi}{\varphi}
\newcommand{\eps}{\varepsilon}

\allowdisplaybreaks[1]

\begin{document}

\baselineskip=17pt

\title{Higher variations for free L{\'e}vy processes}
\author{Michael Anshelevich, Zhichao Wang}
\thanks{This work was supported in part by a Simons Foundation Collaboration Grant}
\address{Department of Mathematics, Texas A\&M University, College Station, TX 77843-3368}
\thanks{Parts of this article form part of the second author's 2018 Master's thesis at Texas A\&M University}
\email{manshel@math.tamu.edu, wangzc@tamu.edu}
\subjclass[2010]{Primary 46L54; Secondary 60F05, 60G51}
\date{\today}

\begin{abstract}
For a general free L{\'e}vy process, we prove the existence of its higher variation processes as limits in distribution, and identify the limits in terms of the L{\'e}vy-It{\^o} representation of the original process. For a general free compound Poisson process, this convergence holds in probability. This implies joint convergence in distribution to a $k$-tuple of higher variation processes, and so the existence of $k$-fold stochastic integrals as limits in probability. If the existence of moments of all orders is assumed, the result holds for free additive (not necessarily stationary) processes and more general approximants. In the appendix we note relevant properties of symmetric polynomials in non-commuting variables.
\end{abstract}

\maketitle

\section{Introduction}

A free (additive) L{\'e}vy process (in law; we will typically omit this qualifier) is a family of self-adjoint random variables $\set{X(t): t \geqslant 0}$ affiliated to a non-commutative probability space $(\mc{A}, \tau)$ which starts at zero, has free, stationary increments, and is stochastically continuous:
\begin{enumerate}
\item
$X(0) = 0$,
\item
For all $n\in\mf{N}$ and $t_0 < t_1 < \ldots < t_n$,
\[
\set{X(t_0), X(t_1) - X(t_0), \ldots, X(t_n) - X(t_{n-1})}
\]
are free,
\item
The distribution of the increment $X(t + h) - X(t)$ depends only on $h$ (and will be denoted $\mu_h$),
\item
For all $\eps > 0$, $\lim_{h \rightarrow 0} \mu_{h}(\abs{x} > \eps) = 0$.
\end{enumerate}

The distributions of increments of a free L{\'e}vy process form a semigroup with respect to the additive free convolution $\boxplus$, and so are $\boxplus$-infinitely divisible. This implies that the Voiculescu transform of the distribution $\mu_t$ of $X(t)$ has the form
\begin{equation}
\label{Eq:Voiculescu-triple}
\phi_{\mu_t}(z) = t \eta + t \frac{a}{z} + t \int_{\mathbb{R}} \left[ \frac{z^{2}}{z - x} - z - x \mathbf{1}_{[-1,1]}(x) \right] \,d\rho(x),
\end{equation}
where $\eta \in \mf{R}$, $a \in \mf{R}_+$, and $\rho$ is a L{\'e}vy measure. Barndorff-Nielsen and Thor\-bj{\o}rn\-sen proved that a free L{\'e}vy process has a free L{\'e}vy-It{\^o} decomposition.

\begin{Thm}[Theorems 6.4, 6.5 in \cite{BN-T-Levy-Ito}]
\label{Thm:Levy-Ito}
Let $\set{X(t): t \geqslant 0}$ be a free L{\'e}vy process, with the generating triple $(\eta,a,\rho)$ as above. Then, $X(t)$ is equal in distribution to a sum of three freely independent parts.
In general,
\begin{multline}\label{levyde}
X(t)\overset{d}{=}\eta t \mathbf{1}_{\mathcal{A}^{0}}+ \sqrt{a} S(t)
 \\
+\lim_{\epsilon\searrow 0} \Bigl(\int_{(0,t]\times \set{|x|>\epsilon}}xdM(t,x)
-\int_{(0,t]\times \set{\epsilon<|x|\leqslant 1}}x (\mathrm{Leb} \otimes \rho)(dt,dx)\mathbf{1}_{\mathcal{A}^{0}} \Bigr).
\end{multline}
In particular, when $\int_{[-1,1]}|x|\rho(dx)$ is finite and $\tilde{\eta}:=\eta-\int_{-1}^{1}x\rho(dx),$ then
\begin{equation}\label{levyde1}
X(t)\overset{d}{=}\tilde{\eta} t \mathbf{1}_{\mathcal{A}^{0}}+ \sqrt{a} S(t)
+\int_{(0,t]\times\mathbb{R}}xdM(t,x).
\end{equation}
Here, $S(t)$ is the free Brownian motion (in some $W^{\ast}$-probability space $(\mathcal{A}^{0},\tau^{0})$) and $M$ is a free Poisson random measure on the measure space $(\mf{R}_+ \times \mf{R},\mathcal{B}(\mf{R}_+ \times \mf{R}), \mathrm{Leb} \otimes \rho)$ with values in $(\mathcal{A}^{0},\tau^{0})$. The limit is taken in probability.
\end{Thm}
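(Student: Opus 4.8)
The plan is to linearize everything through the Voiculescu transform. The three summands in (\ref{levyde}) are freely independent (under $M$, integrals over disjoint Borel subsets of $\mf{R}_+\times\mf{R}$ are free; the scalar drift is free from everything; and $S$ is taken free from $M$), and $\phi$ turns $\boxplus$ into ordinary addition, so it suffices to compute the Voiculescu transform of each part, add them, and match the result with (\ref{Eq:Voiculescu-triple}); the small-jump limit is then handled separately. The preliminary work is to fix the meaning of the free stochastic integral $\int x\, dM$, which I would define on simple integrands and extend by the usual approximation, checking self-adjointness along the way.

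First I would dispose of the two easy pieces. The drift $\eta t\,\mb{1}_{\mc{A}^0}$ is a scalar, so it only shifts the mean and contributes the constant $t\eta$ to $\phi$. The free Brownian motion $S(t)$ is semicircular of variance $t$, hence $\sqrt{a}\,S(t)$ is semicircular of variance $at$ with Voiculescu transform $ta/z$. Together these account for the first two terms of (\ref{Eq:Voiculescu-triple}).

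The core is the jump part. For a Borel set $B$ bounded away from $0$ the intensity $(\mathrm{Leb}\otimes\rho)((0,t]\times B)=t\rho(B)$ is finite, so $\int_{(0,t]\times B} x\, dM$ is a free compound Poisson element. Approximating $x\,\mb{1}_B$ by simple functions and using that $M$ assigns to each cell a free Poisson element (all of whose free cumulants equal the cell's intensity) and assigns free elements to disjoint cells, I would compute its free cumulants to be $\kappa_m=t\int_B x^m\, d\rho(x)$ for $m\geqslant1$. Summing the corresponding series $\phi(z)=\sum_{m\geqslant1}\kappa_m z^{1-m}$ gives $t\int_B \tfrac{xz}{z-x}\, d\rho(x)=t\int_B\bigl[\tfrac{z^2}{z-x}-z\bigr]\, d\rho(x)$. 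Subtracting the compensator $\int_{\epsilon<\abs{x}\leqslant1}x\,d(\mathrm{Leb}\otimes\rho)\,\mb{1}_{\mc{A}^0}$ shifts the mean so that, with $B=\set{\abs{x}>\epsilon}$, the transform becomes exactly the integrand of (\ref{Eq:Voiculescu-triple}) restricted to $\set{\abs{x}>\epsilon}$.

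The main obstacle is the passage $\epsilon\searrow0$, where $\rho$ may be infinite near $0$ and where convergence is asserted in probability rather than merely in distribution. Here I would use that the compensated integrals over disjoint annuli $\set{\epsilon'<\abs{x}\leqslant\epsilon}$ are free and centered, hence orthogonal in $L^2(\tau^0)$, with $\norm{\int x\, dM - \text{(compensator)}}_2^2 = t\int_{\epsilon'<\abs{x}\leqslant\epsilon} x^2\, d\rho(x)$. The L\'evy condition $\int_{\abs{x}\leqslant1}x^2\, d\rho<\infty$ forces these tails to $0$, so the compensated integrals form a Cauchy net in $L^2(\tau^0)$; $L^2$-convergence gives convergence in probability and, by continuity of the Voiculescu transform under this convergence, identifies the limiting transform with the full integral term of (\ref{Eq:Voiculescu-triple}). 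Finally, when $\int_{[-1,1]}\abs{x}\, d\rho<\infty$ the compensator converges absolutely and may be folded into the drift as $\tilde\eta$, yielding the honest integral (\ref{levyde1}) with no limit required.
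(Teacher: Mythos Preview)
The paper does not prove this statement at all: Theorem~\ref{Thm:Levy-Ito} is quoted as background from \cite{BN-T-Levy-Ito} (it is literally labelled ``Theorems 6.4, 6.5 in \cite{BN-T-Levy-Ito}'') and no argument for it is given anywhere in the present article. So there is nothing here to compare your proposal against; any comparison would have to be with the original Barndorff-Nielsen--Thorbj{\o}rnsen paper, not this one.

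That said, your sketch is a reasonable outline of the standard proof and is essentially what \cite{BN-T-Levy-Ito} does: identify the Voiculescu transform of each piece, use additivity under $\boxplus$, and pass to the limit near the origin using the L\'evy condition. One imprecision worth flagging: you write that ``the compensated integrals form a Cauchy net in $L^2(\tau^0)$'', but the full compensated integral over $\set{\abs{x}>\epsilon}$ need not lie in $L^2$ at all, since $\rho$ has no moment hypothesis on $\set{\abs{x}>1}$. What is true is that the \emph{differences} over the annuli $\set{\epsilon'<\abs{x}\leqslant\epsilon}$ (with $\epsilon\leqslant1$) are bounded, centered, and have $L^2$-norm $t\int_{\epsilon'<\abs{x}\leqslant\epsilon}x^2\,d\rho\to0$; equivalently, split off the fixed large-jump piece over $\set{\abs{x}>1}$ and run the $L^2$ Cauchy argument on the remaining small-jump part. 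With that adjustment, convergence in probability of the full expression follows, and the rest of your argument goes through.
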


In the representation in the theorem above, define the \emph{$k$'th variation} of the process by $X^{(1)}(t) = X(t)$ and for $k\geqslant 2$,
\begin{equation}
\label{Eq:k-th-variation}
X^{(k)}(t) = a t \delta_{k,2}\textbf{1}_{\mathcal{A}}+\int_{(0,t]\times\mathbb{R}}x^{k}dM(t,x).
\end{equation}
We will show that these objects are well defined, and again form a free L{\'e}vy process. Later in the article we will define the corresponding object when $x^k$ is replaced by a more general function $p(x)$.

Our first main result concerns convergence in distribution to a higher variation process.

\begin{Thm}
\label{Thm:Variations-distribution}
For each $N \in \mf{N}$, let $\set{X_{i, N} : i \in \mf{N}}$ be free, identically distributed, self-adjoint random variables affiliated to $(\mc{A}, \tau)$. Suppose that for $t \geqslant 0$,
\[
\lim_{N \rightarrow \infty} \sum_{i=1}^{[N t]} X_{i, N} \stackrel{d}{=} X(t).
\]
Then for each $k$,
\[
\lim_{N \rightarrow \infty} \sum_{i=1}^{[N t]} X_{i, N}^k \stackrel{d}{=} X^{(k)}(t),
\]
the limits being taken in distribution.
\end{Thm}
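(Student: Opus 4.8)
The plan is to route everything through the free analogue of the classical limit theorem for infinitely divisible laws, i.e. the Bercovici--Pata correspondence, which characterises (as an ``if and only if'') when the free convolution powers of a triangular array converge to a $\boxplus$-infinitely divisible law and simultaneously identifies the limiting generating triple. Fix $t>0$, write $n_N=[Nt]$, and let $\nu_N$ be the common distribution of the $X_{i,N}$, so that $\sum_{i=1}^{n_N}X_{i,N}$ has law $\nu_N^{\boxplus n_N}$. First I would use the hypothesis, via that limit theorem, to repackage its content as two statements adapted to pushforwards: the finite measures $\lambda_N:=n_N\,\frac{x^{2}}{1+x^{2}}\,\nu_N$ converge weakly (as finite measures, so that no mass escapes to infinity) to $\sigma:=ta\,\delta_0+t\,\frac{x^{2}}{1+x^{2}}\,\rho$, and $n_N\int\frac{x}{1+x^{2}}\,d\nu_N\to\gamma$ for the constant $\gamma$ attached to $\eta$. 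Here the atom $\sigma(\{0\})=ta$ is exactly the semicircular (free Brownian) mass and the absolutely continuous part encodes the L\'evy measure $t\rho$.

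The second step is to reduce the conclusion to the same theorem applied to the transformed array. Since $x\mapsto x^{k}$ is Borel and free independence is preserved under functional calculus within each subalgebra, the variables $X_{i,N}^{k}$ are again free and identically distributed, with common law the pushforward $(p_k)_{*}\nu_N$, $p_k(x)=x^{k}$; hence $\sum_{i=1}^{n_N}X_{i,N}^{k}$ has law $\big((p_k)_{*}\nu_N\big)^{\boxplus n_N}$. By the limit theorem it then suffices to show that $n_N\frac{y^{2}}{1+y^{2}}(p_k)_{*}\nu_N$ converges weakly and that $n_N\int\frac{y}{1+y^{2}}\,d\big((p_k)_{*}\nu_N\big)$ converges, and to identify the two limits. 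The device is a change of variables $y=x^{k}$ which rewrites both quantities as integrals against $\lambda_N$: for bounded continuous $g$,
\[
\int g(y)\,d\Big(n_N\tfrac{y^{2}}{1+y^{2}}\,(p_k)_{*}\nu_N\Big)(y)=\int h_k(x)\,g(x^{k})\,d\lambda_N(x),\qquad h_k(x)=\frac{x^{2k-2}(1+x^{2})}{1+x^{2k}},
\]
and likewise $n_N\int\frac{y}{1+y^{2}}\,d\big((p_k)_{*}\nu_N\big)=\int m_k\,d\lambda_N$ with $m_k(x)=\frac{x^{k-2}(1+x^{2})}{1+x^{2k}}$. For $k\geqslant 2$ both $h_k$ and $m_k$ are bounded and continuous on $\mathbb{R}$ (with finite limits at $\pm\infty$), so the weak convergence $\lambda_N\to\sigma$ transfers directly, yielding the limiting ``variance'' measure $(p_k)_{*}(h_k\,\sigma)$ and the limiting drift $\int m_k\,d\sigma$.

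It remains to match these with the generating data of $X^{(k)}(t)$, and this is exactly where the values of $h_k,m_k$ at $0$ and $\infty$ do the work. Since $h_k(\pm\infty)=1$ and $h_k(0)=0$ for $k\geqslant 2$, the atom $\sigma(\{0\})=ta$ contributes nothing to the limiting variance measure, which reduces on $\mathbb{R}\setminus\{0\}$ to $(p_k)_{*}\big(t\,\tfrac{x^{2k}}{1+x^{2k}}\rho\big)=t\,\tfrac{y^{2}}{1+y^{2}}(p_k)_{*}\rho$; thus the limit has L\'evy measure $t\,(p_k)_{*}\rho$ and \emph{no} spurious semicircular part. Meanwhile $m_k(\pm\infty)=0$ and $m_k(0)=\delta_{k,2}$, so the very same atom feeds the drift precisely when $k=2$, producing the deterministic shift $at\,\delta_{k,2}$, while $\int m_k$ against the absolutely continuous part of $\sigma$ equals $t\int\frac{y}{1+y^{2}}\,d\big((p_k)_{*}\rho\big)(y)$, the natural centering of the transformed jumps. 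Unwinding the Bercovici--Pata data through the L\'evy--It\^o correspondence of Theorem~\ref{Thm:Levy-Ito} then identifies the limit as $X^{(k)}(t)$.

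The main obstacle is precisely this origin analysis. The hypothesis controls $\nu_N$ only through $\frac{x^{2}}{1+x^{2}}\nu_N$, so the abundant small jumps carrying the semicircular mass $ta$ are individually invisible and must be tracked through the nonlinearity $x\mapsto x^{k}$, under which they concentrate near $0$ and could in principle manufacture a phantom semicircular component or a wrong drift. The care required is to verify that $h_k$ and $m_k$ extend continuously across both $0$ and $\infty$ with exactly the values above, so that weak convergence of $\lambda_N$ may be applied with no residual boundary contribution; the entire discrete distinction between $k=2$ and $k\geqslant 3$ in the statement is encoded in the single value $m_k(0)$. I would note that $k=1$ is genuinely exceptional---there $h_1\equiv 1$ but $m_1(x)=1/x$ is singular at the origin, reflecting that single jumps need not be summable---so that case falls outside this scheme and is read off directly from the hypothesis.
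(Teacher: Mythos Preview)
Your proposal is correct and follows essentially the same route as the paper: reduce via the Bercovici--Pata limit theorem to checking weak convergence of $n_N\frac{y^{2}}{1+y^{2}}(p_k)_{*}\nu_N$ and convergence of $n_N\int\frac{y}{1+y^{2}}\,d(p_k)_{*}\nu_N$, carry this out by the change of variables $y=x^{k}$ using the bounded continuous multipliers you call $h_k,m_k$ (which are precisely the paper's $h_p,g_p$ specialized to $p(x)=x^{k}$), and then read off the generating data. The paper actually proves the more general statement for any continuous $p$ with $p(0)=0$ and $p'(0),p''(0)$ existing (Theorem~\ref{Thm:Conergence-p}), and is more explicit about the final ``unwinding'' step, supplying a change-of-variables lemma for free Poisson random measures (Lemma~\ref{changvari-poisson}) to match the limit with the defined $X^{(k)}(t)$.
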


We next discuss \emph{joint} convergence in distribution. In the non-commutative case, there is at this point no universally accepted definition of this notion. Recall the following.

\begin{Defn}
A family of self-adjoint operators $(a_{1, N}, \ldots, a_{k, N})$ affiliated to a non-commutative probability space $(\mc{A}, \tau)$ converges to $(a_1, \ldots, a_k)$ \emph{jointly in moments} if for any non-commutative self-adjoint polynomial $P(x_1, \ldots, x_k)$, $\tau[P(a_{1, N}, \ldots, a_{k, N})]$ is well-defined and
\[
\tau[P(a_{1, N}, \ldots, a_{k, N})] \rightarrow \tau[P(a_1, \ldots, a_k)]
\]
The family converges \emph{jointly in distribution} if for any $P$ as above,
\[
P(a_{1, N}, \ldots, a_{k, N}) \rightarrow P(a_1, \ldots, a_k)
\]
in distribution (see \cite{Mai-Speicher-Berry-Esseen} for a related notion).
\end{Defn}

Recall that convergence in distribution and convergence in moments coincide for bounded operators, but in general neither implies the other.

The next result applies to free additive processes whose increments are not necessarily stationary.

\begin{Thm}
\label{Thm:Joint-moments}
For each $N \in \mf{N}$, let $\set{X_{i, N} : i \in \mf{N}}$ be free self-adjoint random variables affiliated to $(\mc{A}, \tau)$ all of whose moments are finite. Suppose that for $t \geqslant 0$,
\[
\sum_{i=1}^{[N t]} X_{i, N}
\]
converges in moments to $X(t)$ as $N \rightarrow \infty$. Suppose in addition that
\begin{equation}
\label{Eq:root-N}
\sum_{i=1}^N \tau[X_{i, N}^k]^2 \rightarrow 0
\end{equation}
as $N \rightarrow \infty$, for all $k$. Then there exist free additive processes $\set{X^{(j)}(t)}$ such that we have joint convergence in moments
\[
\left( \sum_{i=1}^{[N t]} X_{i, N}, \sum_{i=1}^{[N t]} X_{i, N}^2, \ldots, \sum_{i=1}^{[N t]} X_{i, N}^k \right) \rightarrow \left(X(t), X^{(2)}(t), \ldots, X^{(k)}(t) \right)
\]
as $N \rightarrow \infty$.
\end{Thm}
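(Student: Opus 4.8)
The plan is to prove joint convergence in moments by establishing convergence of all \emph{joint free cumulants} and then invoking the moment--cumulant correspondence. Write $S_N^{(j)}(t) = \sum_{i=1}^{[Nt]} X_{i,N}^j$ for the $j$-th coordinate, and set $C_1 = X(t)$, $C_j = X^{(j)}(t)$ for $j \geq 2$. Since the joint moments and the joint free cumulants of a $k$-tuple are related by the (finite, invertible) moment--cumulant formula summing over non-crossing partitions, it suffices to show that for every $m$ and every word $j_1, \ldots, j_m \in \set{1, \ldots, k}$ the cumulant $\kappa_m(S_N^{(j_1)}(t), \ldots, S_N^{(j_m)}(t))$ converges as $N \to \infty$, and to match the limit with a realizable family of cumulants.

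First I would use freeness of $\set{X_{i,N} : i \in \mf{N}}$: mixed free cumulants of free variables vanish, so only diagonal index terms survive and $\kappa_m(S_N^{(j_1)}(t), \ldots, S_N^{(j_m)}(t)) = \sum_{i=1}^{[Nt]} \kappa_m(X_{i,N}^{j_1}, \ldots, X_{i,N}^{j_m})$. I would then expand each summand by the Krawczyk--Speicher formula for free cumulants with products as arguments: with $n = j_1 + \cdots + j_m$ and $\hat\sigma \in \NC(n)$ the interval partition grouping the $n$ slots into consecutive blocks of sizes $j_1, \ldots, j_m$, one has $\kappa_m(X_{i,N}^{j_1}, \ldots, X_{i,N}^{j_m}) = \sum_{\pi} \prod_{B \in \pi} \kappa_{\abs{B}}(X_{i,N})$, the sum running over $\pi \in \NC(n)$ with $\pi \vee \hat\sigma = 1_n$. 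The full partition $\pi = 1_n$ always occurs and contributes exactly $\kappa_n(X_{i,N})$.

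The heart of the argument, and the step I expect to be the main obstacle, is to show that only the full partition survives in the limit. For any $\pi$ with $r \geq 2$ blocks of sizes $b_1, \ldots, b_r$, the associated contribution is $\sum_{i=1}^{[Nt]} \prod_{s=1}^r \kappa_{b_s}(X_{i,N})$, and I would kill it using \eqref{Eq:root-N}. Since each free cumulant $\kappa_b$ is a polynomial in $\tau[X^1], \ldots, \tau[X^b]$ whose unique degree-one monomial is $\tau[X^b]$ and whose remaining monomials are products of at least two moments, a short Cauchy--Schwarz estimate turns \eqref{Eq:root-N} into $\sum_{i=1}^{[Nt]} \kappa_b(X_{i,N})^2 \to 0$ for every $b \geq 1$, and in particular $\max_i \abs{\kappa_b(X_{i,N})} \to 0$. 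Applying Cauchy--Schwarz to two of the blocks and the uniform bound to the rest gives $\abs{\sum_i \prod_{s=1}^r \kappa_{b_s}(X_{i,N})} \leq \bigl(\prod_{s \geq 3} \max_i \abs{\kappa_{b_s}(X_{i,N})}\bigr)\bigl(\sum_i \kappa_{b_1}(X_{i,N})^2\bigr)^{1/2}\bigl(\sum_i \kappa_{b_2}(X_{i,N})^2\bigr)^{1/2} \to 0$, so every multi-block partition drops out.

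Finally I would identify the surviving limit. The full-partition term sums to $\sum_{i=1}^{[Nt]} \kappa_n(X_{i,N}) = \kappa_n\bigl(\sum_{i=1}^{[Nt]} X_{i,N}\bigr)$ by additivity of free cumulants over free variables, and this converges to $\kappa_n(X(t))$ since $\sum_{i=1}^{[Nt]} X_{i,N} \to X(t)$ in moments, hence in cumulants. It remains to check that the limit tuple has these cumulants, i.e.\ $\kappa_m(C_{j_1}, \ldots, C_{j_m}) = \kappa_n(X(t))$ for every word. Using the free L{\'e}vy--It{\^o} data of Theorem \ref{Thm:Levy-Ito} and the definition \eqref{Eq:k-th-variation}, integrals against the free Poisson random measure $M$ have diagonal joint cumulants $\kappa_m\bigl(\int_{(0,t]\times\mf{R}} x^{j_1}\,dM, \ldots, \int_{(0,t]\times\mf{R}} x^{j_m}\,dM\bigr) = t\int_{\mf{R}} x^{j_1+\cdots+j_m}\,d\rho$; absorbing the Gaussian variance into the first coordinate $C_1 = X(t)$ and the drift $at$ into $C_2 = X^{(2)}(t)$ makes this equal to $\kappa_n(X(t))$ for all words, so the approximant cumulants converge to the joint cumulants of $(X(t), X^{(2)}(t), \ldots, X^{(k)}(t))$. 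For existence in the general non-stationary case one takes the computed family $\kappa_n(X(t))$ as the defining joint-cumulant data: the resulting limiting joint moments are limits of genuine ones, hence a positive tracial functional, whose GNS representation realizes the tuple, and freeness of the increments of each $S_N^{(j)}$ over disjoint time intervals (disjoint index sets) passes to the limit, so the $X^{(j)}$ are free additive processes.
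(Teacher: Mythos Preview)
Your proof is correct and follows the same overall strategy as the paper: reduce the joint cumulant to a diagonal sum by freeness, show that only a single ``top'' term survives via a Cauchy--Schwarz estimate based on \eqref{Eq:root-N}, and identify the limit with $\kappa_n(X(t))$.

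The one genuine difference is in \emph{which} expansion you use. The paper expands $\kappa_m\bigl(X_{i,N}^{u(1)},\ldots,X_{i,N}^{u(m)}\bigr)$ directly by M\"obius inversion over $\NC(m)$ into products of \emph{moments} $\tau_\pi\bigl[X_{i,N}^{u(1)},\ldots,X_{i,N}^{u(m)}\bigr]$; the surviving $\pi=\hat 1_m$ term is $\tau\bigl[X_{i,N}^{u(1)+\cdots+u(m)}\bigr]$, and the hypothesis \eqref{Eq:root-N} applies to the remaining terms without any translation. You instead expand by Krawczyk--Speicher over $\NC(n)$, $n=u(1)+\cdots+u(m)$, into products of single-variable \emph{cumulants}; your surviving term is $\kappa_n(X_{i,N})$, and you need the short auxiliary step of converting \eqref{Eq:root-N} into $\sum_i \kappa_b(X_{i,N})^2\to 0$. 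Both routes land on the same Cauchy--Schwarz argument. The paper's route is a line shorter; yours isolates the limiting cumulant $\kappa_n(X_{i,N})$ more transparently from the outset.

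On the identification of the limit: the paper is quite terse (it just invokes the argument of Proposition~\ref{prop511}). Your discussion via the L\'evy--It\^o data of Theorem~\ref{Thm:Levy-Ito} and \eqref{Eq:k-th-variation} only covers the stationary case, but your GNS fallback for the non-stationary case is adequate and in the same spirit as the paper.
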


\begin{Remark}
For triangular arrays of centered random variables with finite variance, the standard condition for convergence is $\max_{1 \leqslant i \leqslant N} \tau[X_{i,N}^2] \rightarrow 0$ and $\sum_{i=1}^N \tau[X_{i,N}^2] \leqslant c < \infty$, see for example Section~22 in \cite{Loeve-Probability-I}. The assumption \eqref{Eq:root-N} is clearly significantly stronger. On the other hand, it is significantly weaker that assuming that all $X_{i, N}$ are identically distributed. In the latter case, the result follows from the limit theorem 13.1 in \cite{Nica-Speicher-book}, itself based on a result of Speicher \cite{Spe90}.
\end{Remark}

The second case where we can prove joint convergence is when individual convergence holds in probability.

\begin{Thm}
\label{Thm:Compound-Poisson}
Let $\rho$ be a finite probability measure, and
\[
X(t) = \int_{(0, t] \times \mf{R}} x \,dM(t,x)
\]
the corresponding free compound Poisson process. Then for $X_{i, N} = X(\frac{i}{N}) - X(\frac{i-1}{N})$, we have
\[
\lim_{N \rightarrow \infty} \sum_{i=1}^{[N t]} X_{i, N}^k = X^{(k)}(t),
\]
the limit being taken in probability.
\end{Thm}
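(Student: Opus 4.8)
The plan is to realize the process concretely on a full Fock space and split the statement into a \emph{bulk} norm estimate and a \emph{boundary} almost uniform estimate, the latter being the crux.

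First I would realize the free Poisson random measure $M$ on the full Fock space $\mc{F}(H)$ over $H = L^2\big((0,t]\times\mf{R},\,\nu\big)$ with $\nu = \mathrm{Leb}\otimes\rho$ (of finite total mass, since $\rho$ is finite), so that for real $f$
\[
\int f\,dM = \ell(f) + \ell(f)^* + \Lambda(M_f) + \nu(f)\,\mathbf 1 ,
\]
where $\ell(f)$ is the creation operator, $\Lambda(T)$ the gauge (differential second quantization) operator, $M_f$ is multiplication by $f$, and $\nu(f)=\int f\,d\nu$; one checks directly that the free cumulants are $\kappa_n=\int f^n\,d\nu$, matching the free compound Poisson. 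Writing $I_i = ((i-1)/N, i/N]$, $g_i = x\,\mathbf 1_{I_i}$, and $\ell_i=\ell(g_i)$, $\Lambda_i=\Lambda(M_{g_i})$, $c_i=\nu(g_i)$, the increments $X_{i,N} = \ell_i + \ell_i^* + \Lambda_i + c_i\mathbf 1$ are free because they live on the mutually orthogonal subspaces $H_i = L^2(I_i\times\mf{R})$, and the target is $X^{(k)}(t) = \int_{(0,t]\times\mf{R}}x^k\,dM$. I would first reduce to $\rho$ of compact support by truncation, so all operators in sight are bounded.

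For the bulk I would expand $X_{i,N}^k=(\Lambda_i+\ell_i+\ell_i^*+c_i)^k$ and reduce each word via the Fock identities $\Lambda_i^a\ell_i=\ell(g_i^{a+1})$, $\ell_i^*\Lambda_i^a=\ell(g_i^{a+1})^*$, $\ell_i^*\ell_i=\norm{g_i}^2\mathbf 1$, and $\Lambda_i^a=\Lambda(M_{x^a}P_i)$ with $P_i$ the projection onto $H_i$. Exactly four ``connected'' words survive as the pieces of $\int_{I_i}x^k\,dM$: the gauge word $\Lambda_i^k$, the creation word $\Lambda_i^{k-1}\ell_i$, its adjoint, and the scalar word $\ell_i^*\Lambda_i^{k-2}\ell_i$; summed over $i$ they reconstruct $\int_{(0,s_N]\times\mf{R}}x^k\,dM$ with $s_N=[Nt]/N$. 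Every other word reduces to something localized to $H_i$ and contributes to $\sum_i$ one of two types: a creation-type $\ell(u_i)(\cdots)$ with $u_i\in H_i$, whose sum has norm controlled by $(\sum_i\norm{u_i}^2)^{1/2}$, or a gauge-type $\Lambda(T_i)$ with $T_i$ supported on $H_i$, whose sum is block diagonal of norm $\max_i\norm{T_i}$. In both cases the relevant quantities are $O(N^{-1/2})$ or smaller, so for fixed $k$ this yields the norm bound
\[
\Big\Vert \sum_{i=1}^{[Nt]} X_{i,N}^k - \int_{(0,s_N]\times\mf{R}} x^k\,dM \Big\Vert \longrightarrow 0 .
\]
The appendix on symmetric functions in noncommuting variables is convenient for organizing this expansion.

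The remaining, and genuinely hard, part is the boundary remainder $R_N = X^{(k)}(t) - \int_{(0,s_N]}x^k\,dM = \int_{(s_N,t]\times\mf{R}}x^k\,dM$. Its free cumulants are $(t-s_N)\int x^{kn}\,d\rho\to 0$, so $R_N\to 0$ in $\norm{\cdot}_2$ and in distribution; but $R_N$ does \emph{not} converge in operator norm, because its gauge part $\Lambda(M_{x^k}Q_N)$ has norm $\sup_{\supp\rho}\abs{x}^k$ independent of $N$. This gauge contribution is invisible to the trace ($R_N\Omega\to 0$) yet operator-norm-bounded-below, which is exactly why the convergence is only almost uniform and must be produced by a \emph{single} large projection rather than by any soft moment argument. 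To obtain it I would exploit the reverse martingale structure in time: letting $\mc{F}_s$ be the von Neumann algebra generated by $M$ on $(s,t]\times\mf{R}$, decreasing as $s\uparrow t$ with $\bigcap_{s<t}\mc{F}_s=\mf{C}\mathbf 1$ (no mass at the fixed time $t$), the elements $T_s:=\int_{(s,t]}x^k\,dM$ satisfy $\Exp[T_s\mid\mc{F}_{s'}]=T_{s'}+(s'-s)\int x^k\,d\rho$ for $s<s'$, so $T_s-(t-s)\int x^k\,d\rho$ is a reverse martingale with mean $0$. By the noncommutative (reverse) martingale convergence theorem this converges almost uniformly as $s\uparrow t$, whence $T_s\to 0$ almost uniformly, and a noncommutative maximal (Doob) inequality upgrades this to almost uniform convergence of $R_N=T_{s_N}$ along the (not necessarily monotone) sequence $s_N\uparrow t$.

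Assembling the two parts, $\sum_{i=1}^{[Nt]}X_{i,N}^k - X^{(k)}(t)=D_N-R_N$ with $\norm{D_N}\to 0$ (hence $D_N\to 0$ almost uniformly) and $R_N\to 0$ almost uniformly, which gives the claim. I expect the main obstacle to be precisely this almost uniform (rather than distributional or $L^2$) control of the boundary remainder: the offending gauge operator is $\norm{\cdot}_2$-negligible but norm-bounded-below, so nothing in the moment computation behind Theorem~\ref{Thm:Variations-distribution} is enough, and one genuinely needs the martingale and maximal-inequality machinery together with the fact that $t$ carries no jump to build one projection killing the entire tail uniformly.
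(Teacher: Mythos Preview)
Your approach is correct in outline but differs from the paper's and, at the boundary, deploys much heavier tools than needed.

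The paper does not work on the full Fock space. It realizes $X(t)=s\,e(t)\,s$ with $s$ semicircular and free from a commutative orthogonal-increment process $e(t)\in L^\infty\bigl((0,1]\times\mathbb R,\mathrm{Leb}\otimes\rho\bigr)$, so that $\int_{(0,t]}x^k\,dM=s\,e(t)^k\,s$. The bulk $\sum_i\bigl[(s\,e_{i,N}\,s)^k-s\,e_{i,N}^k\,s\bigr]$ expands into words with centered insertions $s^2-1$ between powers of $e_{i,N}$, and these are shown to vanish a.u.\ by truncating $e$ with the commutative projection $q=\chi_{[0,1]\times(-T,T)}$, lifting $q$ to a projection $p$ in the ambient algebra via the Bercovici--Voiculescu compression lemma, and then quoting a norm bound from~\cite{AnsFSM}. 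Your Fock-space word expansion is the same combinatorics in a different model, so the two bulk arguments buy the same thing.

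You are right that $R_N=\int_{(s_N,t]}x^k\,dM$ does not go to $0$ in norm (the gauge part has norm bounded below), and right that the paper only cites an in-probability lemma for this piece. But reverse martingales and noncommutative maximal inequalities are overkill: the same compression trick that handles the $x$-truncation handles the time-boundary. Take $q_\varepsilon=\chi_{(0,\,t-\varepsilon]\times\mathbb R}$; for all large $N$ the integrand $x^k\mathbf 1_{(s_N,t]}$ is annihilated by $q_\varepsilon$. In the $s\,e\,s$ model, let $r$ be the range projection of $s(1-q_\varepsilon)$ and $p=1-r$; then $\tau(1-p)\leqslant\tau(1-q_\varepsilon)=\varepsilon$, the projection $p$ is independent of $N$, and $(1-q_\varepsilon)\,s\,p=0$ forces $R_N\,p=s\cdot x^k\mathbf 1_{(s_N,t]}\cdot(1-q_\varepsilon)\,s\,p=0$ for all large $N$. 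An analogous single projection is available in your Fock model. So the boundary is a two-line compression, not a martingale theorem. Your ``reduce to compactly supported $\rho$'' step likewise has to be carried out almost uniformly, not merely in distribution; it is again this same compression, which the paper folds into the bulk estimate rather than isolating as a separate reduction.
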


We expect similar convergence for general free L{\'e}vy processes. At this point we have the following partial result.

\begin{Thm}
\label{Thm:Quadratic-Cubic}
Let $\set{X(t): t \geqslant 0}$ be a free L{\'e}vy process whose increments have symmetric distributions, and $X_{i, N} = X(\frac{i}{N}) - X(\frac{i-1}{N})$. Then
\[
\lim_{N \rightarrow \infty} \sum_{i=1}^{[N t]} X_{i, N}^2 = X^{(2)}(t),
\]
the limit being taken in probability.
\end{Thm}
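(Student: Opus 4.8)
The plan is to reduce to the three freely independent components of the L\'evy--It\^o decomposition and to treat the quadratic variation of each separately, using the symmetry hypothesis to remove all drift and compensator terms. Because the increments are symmetric we have $\eta = 0$ and $\rho$ symmetric, so $\int_{|x|\le 1} x\,d\rho = 0$ and every compensator in \eqref{levyde} vanishes. Fixing $\delta > 0$, I would split $\rho = \rho|_{|x|>\delta} + \rho|_{|x|\le\delta}$ and write each increment as a sum of three freely independent, \emph{centered} pieces
\[
X_{i,N} = B_{i,N} + J_{i,N} + K_{i,N},
\]
where $B_{i,N} = \sqrt{a}\,(S(i/N)-S((i-1)/N))$ is semicircular of variance $a/N$, $J_{i,N} = \int_{(0,1/N]\times\set{|x|>\delta}} x\,dM$ is a genuine (uncompensated) compound Poisson increment with finite symmetric L\'evy measure $\rho|_{|x|>\delta}$, and $K_{i,N} = \int_{(0,1/N]\times\set{|x|\le\delta}} x\,dM$ is the centered small-jump part. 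Expanding $\sum_i X_{i,N}^2$ yields the three diagonal sums $\sum_i B_{i,N}^2$, $\sum_i J_{i,N}^2$, $\sum_i K_{i,N}^2$, together with the cross terms $\sum_i (B_{i,N}J_{i,N}+J_{i,N}B_{i,N})$ and its $BK$ and $JK$ analogues.

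For the diagonal sums: the semicircular sum $\sum_i B_{i,N}^2$ converges in distribution to the scalar $at\,\mathbf{1}_{\mathcal{A}}$ by Theorem~\ref{Thm:Variations-distribution}, and since the limit is a constant this upgrades automatically to convergence in probability. The large-jump sum $\sum_i J_{i,N}^2$ converges (after rescaling $\rho|_{|x|>\delta}$ to a probability measure) almost uniformly, hence in probability, to $\int_{(0,t]\times\set{|x|>\delta}} x^2\,dM$ by Theorem~\ref{Thm:Compound-Poisson}. The small-jump sum I would not try to converge, but merely bound: it is positive with $\tau\bigl[\sum_i K_{i,N}^2\bigr] = \frac{[Nt]}{N}\int_{|x|\le\delta} x^2\,d\rho \le t\int_{|x|\le\delta} x^2\,d\rho$, so Markov's inequality for positive operators gives $\tau\bigl[\mathbf{1}_{(s,\infty)}\bigl(\sum_i K_{i,N}^2\bigr)\bigr] \le \frac{t}{s}\int_{|x|\le\delta} x^2\,d\rho$, which tends to $0$ as $\delta\to 0$ \emph{uniformly in $N$}. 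The same estimate controls the matching piece $\int_{(0,t]\times\set{|x|\le\delta}} x^2\,dM$ of the limit $X^{(2)}(t)$.

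The cross terms vanish in $L^2(\tau)$ for each fixed $\delta$. Here I would use freeness across the index $i$ together with the centering of each piece: for $i\ne j$ the trace of any product factors into individual traces, each of which is zero, so all off-diagonal contributions drop out. A diagonal term such as $\tau\bigl[(B_{i,N}J_{i,N}+J_{i,N}B_{i,N})^2\bigr]$ reduces by freeness to terms of the form $\tau[B_{i,N}^2]\,\tau[J_{i,N}^2] = O(N^{-2})$, the alternating terms $\tau[B_{i,N}J_{i,N}B_{i,N}J_{i,N}]$ vanishing for centered free variables; summing $[Nt]$ such terms gives $O(N^{-1})\to 0$. The $BK$ and $JK$ cross terms are estimated identically, so each converges to $0$ in probability for fixed $\delta$.

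Finally I would assemble the pieces using the triangle inequality for the measure topology, $\tau[\mathbf{1}_{(s+t,\infty)}(|A+B|)] \le \tau[\mathbf{1}_{(s,\infty)}(|A|)] + \tau[\mathbf{1}_{(t,\infty)}(|B|)]$, applied to the five-fold splitting of $\sum_i X_{i,N}^2 - X^{(2)}(t)$. Given $\eps > 0$, I would first choose $\delta$ so small that the two small-jump contributions are each below $\eps$ in probability uniformly in $N$, and then send $N\to\infty$ so that the semicircular sum, the large-jump sum, and the cross terms are each below $\eps$. The hard part will be exactly this interchange of $\delta\to 0$ and $N\to\infty$: the cross-term and large-jump estimates hold only after $\delta$ is fixed, whereas the smallness of the small-jump pieces is what must be made uniform in $N$, so the order of the choices of $\delta$ and $N$ must be handled with care. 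The symmetry hypothesis is precisely what makes the scheme work, since by killing the compensators it leaves $J_{i,N}$ a genuine compound Poisson increment to which Theorem~\ref{Thm:Compound-Poisson} applies directly.
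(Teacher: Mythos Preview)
Your overall decomposition and the handling of the diagonal pieces are sound, and in fact your treatment of $\sum_i B_{i,N}^2$ and $\sum_i K_{i,N}^2$ is slightly more direct than the paper's (the paper lumps $B+K$ into a single piece $X'$ and invokes Theorem~\ref{Thm:Variations-distribution} on it, whereas you get the Brownian part by an $L^2$ computation and the small-jump part by Markov). The interchange of $\delta\to0$ and $N\to\infty$ that you flag is handled exactly as you propose, and is not the real difficulty.

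The genuine gap is in the cross terms involving $J$. Your $L^2$ argument requires $\tau[J_{i,N}^2]<\infty$, but a L\'evy measure only satisfies $\int\min(1,x^2)\,d\rho<\infty$; the large-jump piece $J_{i,N}$ is compound Poisson with L\'evy measure $\tfrac{1}{N}\rho|_{\{|x|>\delta\}}$, and its second moment is $\tfrac{1}{N}\int_{|x|>\delta}x^2\,d\rho$, which is infinite for, say, any symmetric $\alpha$-stable process. So the estimates $\tau\bigl[(B_{i,N}J_{i,N}+J_{i,N}B_{i,N})^2\bigr]=O(N^{-2})$ and the analogous $JK$ estimate are not available, and the off-diagonal factorization is not even well posed. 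This is exactly where the paper invests its effort: rather than moments, it proves a separate result (Proposition~\ref{Prop:Dot-product-2}) showing that for triangular arrays with \emph{symmetric, $\boxplus$-infinitely divisible} row sums, $\sum_i\bigl(X_{i,N}^{(1)}X_{i,N}^{(2)}+X_{i,N}^{(2)}X_{i,N}^{(1)}\bigr)\to 0$ in probability. The proof works entirely at the level of distributions, using the identity $\bigl((\mu\,\square\,\nu)^{\boxplus 1/2}\bigr)^2=\mu^2\boxtimes\nu^2$ for symmetric $\mu,\nu$, the relation $\mu^2=m\boxtimes\sigma$ with $\sigma$ free regular, and the scaling identity~\eqref{Eq:Boxplus-power}, so no moment hypotheses are needed. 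To repair your argument you must either invoke this proposition for the $X'J$ cross term (grouping $B+K=X'$ as the paper does), or introduce a further truncation $J=J^{\le T}+J^{>T}$ and control the tail piece separately---which is doable but essentially reproduces the same difficulty one level down.
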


\begin{Cor}
\label{Cor:Joint-probability}
For a free compound Poisson process $\set{X(t) : t \geqslant 0}$ and increments $X_{i, N}$ as above, we have joint convergence in distribution
\[
\left( \sum_{i=1}^{[N t]} X_{i, N}, \sum_{i=1}^{[N t]} X_{i, N}^2, \ldots, \sum_{i=1}^{[N t]} X_{i, N}^k \right) \rightarrow \left( X(t), X^{(2)}(t), \ldots, X^{(k)}(t) \right)
\]
as $N \rightarrow \infty$.
\end{Cor}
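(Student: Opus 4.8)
The plan is to derive the joint convergence directly from the almost uniform convergence of the individual higher variations furnished by Theorem~\ref{Thm:Compound-Poisson}, exploiting two structural facts: almost uniform convergence is stable under the $*$-algebra operations, and it implies convergence in distribution. Fix a self-adjoint non-commutative polynomial $P(x_1, \ldots, x_k)$; by the definition of joint convergence in distribution it suffices to prove that
\[
P\Bigl( \sum_{i=1}^{[Nt]} X_{i,N}, \ldots, \sum_{i=1}^{[Nt]} X_{i,N}^k \Bigr) \longrightarrow P\bigl( X(t), \ldots, X^{(k)}(t) \bigr)
\]
in distribution as $N \to \infty$. Writing $Y_{j,N} = \sum_{i=1}^{[Nt]} X_{i,N}^j$ and $Y_j = X^{(j)}(t)$, Theorem~\ref{Thm:Compound-Poisson} gives $Y_{j,N} \to Y_j$ almost uniformly for each $j = 1, \ldots, k$, and the goal becomes $P(Y_{1,N}, \ldots, Y_{k,N}) \to P(Y_1, \ldots, Y_k)$ in distribution.

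First I would promote the $k$ separate almost uniform convergences to a single simultaneous one. Given $\eps > 0$, for each $j$ choose a projection $p_j \in \mc{A}$ with $\tau(\mathbf{1} - p_j) < \eps/k$ on whose range $(Y_{j,N} - Y_j) p_j \to 0$ in operator norm; then $p = p_1 \wedge \cdots \wedge p_k$ satisfies $\tau(\mathbf{1} - p) < \eps$ and $\norm{(Y_{j,N} - Y_j) p} \to 0$ for all $j$ simultaneously. After compression by $p$ every operator in sight is bounded, and since $P$ is a finite combination of sums and products whose factors converge in norm to bounded limits, a standard telescoping estimate gives $\norm{\bigl( P(Y_{1,N}, \ldots, Y_{k,N}) - P(Y_1, \ldots, Y_k)\bigr) p} \to 0$. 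As $\eps > 0$ is arbitrary, this is exactly almost uniform convergence $P(Y_{1,N}, \ldots, Y_{k,N}) \to P(Y_1, \ldots, Y_k)$, and I would then conclude by the fact that almost uniform convergence implies convergence in measure, hence weak convergence of the spectral distributions, i.e.\ convergence in distribution.

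The main obstacle is the presence of unbounded operators inside the products. Since $\rho$ need not be compactly supported, the $Y_{j,N}$ and the limits $Y_j$ are in general unbounded, so the compression/telescoping argument must be carried out in the $*$-algebra of $\tau$-measurable operators: one must respect that $(Y_{j,N} - Y_j)p$ is bounded while $p(Y_{j,N} - Y_j)$ need not be, so the products have to be reassociated so that each unbounded factor acts on an already-compressed (hence bounded) operator. The cleanest way to organize this is to recognize that almost uniform convergence is precisely convergence in Nelson's measure topology, under which the $\tau$-measurable operators form a complete topological $*$-algebra; continuity of the polynomial evaluation map $P$ in this topology then yields the required convergence in measure of $P(Y_{1,N}, \ldots, Y_{k,N})$ to $P(Y_1, \ldots, Y_k)$ immediately, after which convergence in distribution follows from the compatibility of the measure topology with weak convergence of distributions for self-adjoint measurable operators.
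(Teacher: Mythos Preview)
Your approach is essentially the same as the paper's: use Theorem~\ref{Thm:Compound-Poisson} to get almost uniform convergence of each coordinate, pass this through the polynomial $P$, and then deduce convergence in distribution. The paper packages the ``a.u.\ convergence is stable under sums and products'' step as Lemma~\ref{Lemma:a-u-products}, proved via the Bercovici--Voiculescu one-sided compression trick (Lemma~\ref{Lemma:One-sided-compression}); that lemma is precisely the careful reassociation you allude to in your second paragraph, and once it is in hand the corollary is immediate.

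One genuine slip: almost uniform convergence is \emph{not} ``precisely convergence in Nelson's measure topology''. Convergence in measure is the paper's convergence in probability (Proposition~\ref{proposition_probability}), and a.u.\ convergence is strictly stronger, the non-commutative analogue of almost-sure versus in-probability convergence. Your fallback argument still works, but only as a one-way implication: a.u.\ $\Rightarrow$ in measure, the measurable operators form a topological $*$-algebra in the measure topology, hence $P(Y_{1,N},\ldots,Y_{k,N}) \to P(Y_1,\ldots,Y_k)$ in measure, hence in distribution. That route is perfectly valid and arguably slicker than proving the a.u.\ product lemma from scratch, but you should state the implication correctly. Also note that in your direct compression argument, taking $p = p_1 \wedge \cdots \wedge p_k$ does not by itself make the limits $Y_j p$ bounded; you need extra projections for that (as in the proof of Lemma~\ref{Lemma:a-u-products}), which is exactly the obstacle you flag but do not resolve.
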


\begin{Cor}
\label{Cor:Convergence-integral}
Let $\set{X_{i, N} : 1 \leqslant i \leqslant N, N \in \mf{N}}$ be as in either Theorem~\ref{Thm:Joint-moments} or in Corollary~\ref{Cor:Joint-probability}. Then for $t \geqslant 0$,
\begin{multline}
\label{Eq:Stochastic-integral}
\lim_{N \rightarrow \infty} \sum_{\substack{1 \leqslant i(1), i(2), \ldots, i(k) \leqslant [N t] \\ i(1) \neq i(2), i(2) \neq i(3), \ldots, i(k-1) \neq i(k)}} X_{i(1), N} X_{i(2), N} \ldots X_{i(k), N} \\
= \sum_{j=1}^k (-1)^{k - j} \sum_{\substack{m_1, \ldots, m_j \geqslant 1 \\ m_1 + \ldots + m_j = k}} X^{(m_1)}(t) \ldots X^{(m_j)}(t).
\end{multline}
Here under the assumptions of Theorem~\ref{Thm:Joint-moments} the limit is in moments, while under the assumptions of Corollary~\ref{Cor:Joint-probability} the limit is in probability, and so also in distribution.
\end{Cor}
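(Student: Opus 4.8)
The plan is to reduce the statement to a single exact algebraic identity, valid for each fixed $N$, which rewrites the constrained (off-diagonal) sum as a signed sum of products of the \emph{power sums} $\sum_{v=1}^{[N t]} X_{v,N}^{m}$, and then to pass to the limit factor by factor, invoking the already-established joint convergence of these power sums. The combinatorics is the same under both sets of hypotheses; only the mode of convergence in the final step differs. Concretely, I would first prove
\[
\sum_{\substack{1 \leqslant i(1), \ldots, i(k) \leqslant [N t] \\ i(1) \neq i(2), \ldots, i(k-1) \neq i(k)}} X_{i(1), N} \cdots X_{i(k), N} = \sum_{j=1}^{k} (-1)^{k-j} \sum_{\substack{m_1 + \cdots + m_j = k \\ m_1, \ldots, m_j \geqslant 1}} \prod_{\ell=1}^{j} \Bigl( \sum_{v=1}^{[N t]} X_{v, N}^{m_\ell} \Bigr),
\]
an identity in the polynomial algebra generated by the $X_{v,N}$, holding for every $N$.

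To establish this identity I would run an inclusion-exclusion over the $k-1$ adjacency constraints $i(\ell) = i(\ell+1)$. For a subset $B \subseteq \set{1, \ldots, k-1}$ of ``collapsed'' gaps, let $T_N(B)$ be the sum over all tuples $(i(1), \ldots, i(k))$ subject only to $i(\ell) = i(\ell+1)$ for $\ell \in B$, with the remaining gaps unconstrained; the constrained sum on the left is then $\sum_{B} (-1)^{\abs{B}} T_N(B)$. Imposing $i(\ell) = i(\ell+1)$ for $\ell \in B$ partitions the positions $\set{1, \ldots, k}$ into maximal runs of forced-equal indices, an interval partition recording a composition $(m_1, \ldots, m_j)$ of $k$ with exactly $j = k - \abs{B}$ parts. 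Within each run the common index ranges freely over $\set{1, \ldots, [N t]}$ and distinct runs are mutually unconstrained, so $T_N(B)$ factors as $\prod_{\ell=1}^{j} \sum_{v} X_{v,N}^{m_\ell}$; grouping the sum over $B$ according to the resulting composition, and using $(-1)^{\abs{B}} = (-1)^{k-j}$, yields the displayed formula. (This is the non-commutative Newton-type relation between the constrained product and the power sums; the relevant symmetric-polynomial bookkeeping is recorded in the appendix. Small cases $k=2,3$ can be checked directly as a sanity test.)

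It remains to take $N \to \infty$ on the right-hand side. Under the hypotheses of Theorem~\ref{Thm:Joint-moments}, the joint convergence in moments of $\bigl(\sum_v X_{v,N}, \ldots, \sum_v X_{v,N}^k\bigr)$ to $\bigl(X(t), \ldots, X^{(k)}(t)\bigr)$ means precisely that $\tau$ applied to any non-commutative polynomial in the power sums converges to $\tau$ of the same polynomial in the $X^{(m)}(t)$; since the right-hand side above is such a polynomial, and the identity is exact, the off-diagonal sum converges in moments to $\sum_{j} (-1)^{k-j} \sum_{m_1 + \cdots + m_j = k} X^{(m_1)}(t) \cdots X^{(m_j)}(t)$, as claimed. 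Under the hypotheses of Corollary~\ref{Cor:Joint-probability}, each factor $\sum_v X_{v,N}^{m} \to X^{(m)}(t)$ almost uniformly, and I would conclude by showing that a product of finitely many almost uniformly convergent sequences converges almost uniformly to the product of the limits; the finite signed sum then converges almost uniformly as well, giving also convergence in distribution.

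The main obstacle is precisely this last point: stability of almost uniform convergence under the products on the right-hand side, where the factors are in general unbounded operators. The plan is to pass, for a prescribed tolerance $\eps$, to a common spectral projection $p$ with $\tau(1-p)$ small on which every factor converges in operator norm and on which all the limit operators act boundedly; a Leibniz-type telescoping of $\prod_\ell A_{\ell,N} - \prod_\ell A_\ell$ into single-factor differences, each controlled in norm on $p\mc{A}p$ and bounded on the remaining factors, then delivers norm convergence on $p\mc{A}p$ and hence almost uniform convergence of the product. By contrast, the combinatorial identity itself, while the technical heart of the argument, is elementary and requires no analysis.
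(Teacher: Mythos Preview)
Your proposal is correct and follows essentially the same route as the paper. The algebraic identity you obtain by inclusion--exclusion over the adjacency constraints is exactly Corollary~\ref{Cor:Distinct} in the appendix (subsets of $\{1,\ldots,k-1\}$ are in bijection with interval partitions of $[k]$, so your inclusion--exclusion is the M\"obius inversion carried out there), and the stability of almost uniform convergence under products---your ``main obstacle''---is already recorded as Lemma~\ref{Lemma:a-u-products}, which the paper invokes in precisely the way you describe.
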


It was shown in Proposition~1 of \cite{AnsFSM} that for free L{\'e}vy processes with bounded, centered increments, the limits (in norm) of the left-hand side of \eqref{Eq:Stochastic-integral} and of
\begin{equation}
\label{Eq:All-distinct}
\sum_{\substack{1 \leqslant i(1), i(2), \ldots, i(k) \leqslant [N t] \\ \abs{\set{i(1), i(2), \ldots, i(k)}} = k}} X_{i(1), N} X_{i(2), N} \ldots X_{i(k), N}.
\end{equation}
coincide. These limits should be interpreted as the free stochastic integral
\[
\int_{[0,t]^k} \,dX(s_1) \ldots \,dX(s_k).
\]
See the end of the introduction, and the appendix, for the explanation of why the expression \eqref{Eq:Stochastic-integral} is more appropriate in the free case.

\subsection*{Prior results}

The initial motivation for our analysis was the article \cite{Avram-Taqqu-symmetric-poly} by Avram and Taqqu. We briefly compare some of their results with ours; the reader should consult their article for more details. Let $\set{X(t)}$ be a L{\'e}vy process, and define its higher variations pathwise using jumps. Note that such a definition is unavailable in the non-commutative case. Representation (\ref{Eq:k-th-variation}), which we use instead, is closely related to Theorem 36 in section I of \cite{protter2004stochastic} (where it is stated only for the jumps of a L{\'e}vy process), and is (as Protter points out) obvious in the classical case. Let $\set{X_{i, N} : 1 \leqslant i \leqslant N, N \in \mf{N}}$ be a triangular array with i.i.d. rows, such that
\[
\sum_{i=1}^N X_{i,N} \rightarrow X(t)
\]
in distribution as $N \rightarrow \infty$. Then a multivariate limit theorem implies that
\begin{equation}
\label{Eq:Joint-convergence}
\sum_{i=1}^{[N t]} \left( X_{i, N}, X_{i, N}^2, \ldots, X_{i,N}^k \right) \rightarrow \left( X(t), X^{(2)}(t), \ldots, X^{(k)}(t) \right)
\end{equation}
jointly in distribution. At this point, in the non-commutative case such a theorem is only available for convergence in moments. On the other hand, we actually prove Theorem~\ref{Thm:Variations-distribution} not just for powers but for polynomials, that is, linear combinations of powers. For commuting variables, convergence in distribution of linear combinations is equivalent to joint convergence in distribution (an easy exercise left to the reader). So the appropriate commutative analog of Theorem~\ref{Thm:Variations-distribution} also implies the joint convergence in \eqref{Eq:Joint-convergence}.

Next, recall that the elementary symmetric polynomial
\[
e_k(x_1, \ldots, x_N) = \sum_{1 \leqslant i(1) < i(2) < \ldots < i(k) \leqslant N} x_{i(1)} x_{i(2)} \ldots x_{i(k)}
\]
is a polynomial $P_k(p_1, \ldots, p_k)$ in the power sum symmetric polynomials
\[
p_j(x_1, \ldots, x_N) = \sum_{i=1}^N x_i^j
\]
(the polynomial $P_k$ can be written down explicitly). Consequently,
\begin{multline*}
\sum_{1 \leqslant i(1) < i(2) < \ldots < i(k) \leqslant [N t]} X_{i(1), N} X_{i(2), N} \ldots X_{i(k), N} \\
= P_k \left( \sum_{i=1}^{[N t]} X_{i, N}, \sum_{i=1}^{[N t]} X_{i, N}^2, \ldots, \sum_{i=1}^{[N t]} X_{i, N}^k \right)
\end{multline*}
converges in distribution as $N \rightarrow \infty$. Its limit is naturally identified with the multiple integral
\[
\int_{0 \leqslant s_1 < s_2 < \ldots < s_k \leqslant t} \,dX(s_1) \,dX(s_2) \ldots \,dX(s_k).
\]
Note that as explained in the appendix, if the variables $\set{x_i}$ do not commute, $e_k$ is \emph{not} a polynomial in the $p_j$'s. Its natural replacement in the non-commutative setting is
\[
\tilde{e}_k(x_1, \ldots, x_N) = \sum_{\substack{1 \leqslant i(1), i(2), \ldots, i(k) \leqslant N \\ i(1) \neq i(2), i(2) \neq i(3), \ldots, i(k-1) \neq i(k)}} x_{i(1)} x_{i(2)} \ldots x_{i(k)}
\]
used in equation~\eqref{Eq:Stochastic-integral}.

Motivated by \cite{Rota}, the first author studied related objects in \cite{AnsFSM}, but only for the case of free L{\'e}vy processes with compactly supported distributions. We are not aware of other sources where these specific topics are studied in the free probability setting. See however the study of homogeneous sums in \cite{Deya-Nourdin-Invariance,Simone-Universality}.

The article is organized as follows. After the introduction and background in Section~\ref{Sec:Background}, Section~\ref{Sec:Single-variable} treats, for general free L{\'e}vy processes, convergence in distribution to the higher variation processes, and their generalization from powers to more general continuous functions. The key result is Theorem~\ref{Thm:Conergence-p}. Section~\ref{Sec:Moments} treats joint convergence in moments for more general additive processes. Section~\ref{Sec:Probability} contains results about convergence in probability, as well as an alternative definition of joint convergence in distribution for non-commuting variables. Finally, in the appendix we explain which symmetric polynomials in non-commuting variables can be expressed in terms of the basic power sum symmetric polynomials.

\subsection*{Acknowledgements}
The authors are grateful to Matthieu Josuat-Verg{\`e}s for the references in the Appendix, to the referee for several useful comments and questions, and to Zhiyuan Yang for comments leading to the correction of Proposition 20.

\section{Background and the free Poisson random measure}
\label{Sec:Background}

\subsection{Unbounded Operators and Affiliated Operators}

A $W^{\ast}$-probability spa\-ce is a pair ($\mathcal{A},\tau$), where $\mathcal{A}$ is a von Neumann algebra acting on a Hilbert space and $\tau$ is a faithful normal tracial state on $\mathcal{A}$. Throughout most of the paper, we will work with possibly unbounded operators affiliated to $\mc{A}$. A self-adjoint operator $a$ is affiliated to $\mc{A}$ if all of its spectral projections are in $\mc{A}$. Equivalently, for any bounded Borel function, $f(a) \in \mc{A}$. We denote the collection of all self-adjoint operators affiliated to $\mc{A}$ by $\tilde{\mc{A}}_{sa}$. A general closed, densely defined operator $a$ is affiliated to $\mc{A}$ if in its polar decomposition $a = u \abs{a}$, we have $u \in \mc{A}$ and $\abs{a} \in \tilde{\mc{A}}_{sa}$. The collection of all such operators is denoted by $\tilde{\mc{A}}$. Murray and von Neumann \cite{Murray-vN} proved that $\tilde{\mc{A}}$ is an algebra, that is, if $a, b \in \tilde{\mc{A}}$, then $a + b$ and $a b$ are densely defined and closable, and their closures are in $\tilde{\mc{A}}$.

For $a \in \tilde{\mc{A}}_{sa}$, its distribution is the unique probability measure $\mu_a$ on $\mf{R}$ such that for any bounded Borel function,
\begin{equation}
\label{Eq:Distribution}
\tau[f(a)] = \int_{\mf{R}} f(x) \,d\mu_a(x).
\end{equation}

\begin{mydef}(\cite{BNTSelf})  Let $(\mathcal{A},\tau)$ be a $W^{\ast}$-probability space and $(a_{n})_{n\in\mathbb{N}}$ be a sequence of operators affiliated with $\mathcal{A}.$ We say that $a_{n}\rightarrow a$ in probability if $|a_{n}-a|\rightarrow 0$ in distribution as $n\rightarrow\infty$.
\end{mydef}
Here, $|a|:=\sqrt{a^{*}a}$, which is self-adjoint. When $a_{n}$ and $a$ are self-adjoint operators affiliated with $\mathcal{A}$, $a_{n}\rightarrow a$ in probability if and only if $a_{n}-a$ converges to zero in distribution, i.e. the distribution of $a_{n}-a$ as a probability measure on $\mathbb{R}$ converges weakly to probability measure $\delta_{0}.$

We list the following proposition for completeness. See for example Proposition~2.18 in \cite{BNTSelf}.

\begin{Prop} \label{proposition_probability}
The following are equivalent.
\begin{enumerate}
\item
$a_n \rightarrow a$ in probability.
\item
$\forall \eps > 0$, the traces of the spectral projections $\tau[\chf{(\eps, \infty)}(\abs{a_n - a})] \rightarrow 0$.
\item
Denote
\[
\mc{N}(\eps, \delta) = \set{b \in \tilde{\mc{A}} : \exists \text{ projection } p \in \mc{A} \text{ s.t. } \tau[1 - p] < \delta, b p \in \mc{A}, \norm{b p} < \eps}.
\]
Then $\forall \eps, \delta > 0$, for sufficiently large $n$, $a_n - a \in \mc{N}(\eps, \delta)$.
\end{enumerate}
This mode of convergence is also called convergence in measure.
\end{Prop}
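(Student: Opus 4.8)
The plan is to fix $b_n = a_n - a$ and work throughout with the nonnegative self-adjoint operator $c_n := \abs{a_n - a}$, whose distribution $\mu_{c_n}$ is supported on $[0, \infty)$. Since $c_n \geqslant 0$, the function $\chf{(\eps, \infty)}$ is a genuine spectral indicator, so $\tau[\chf{(\eps,\infty)}(c_n)] = \mu_{c_n}((\eps, \infty))$, and statement (b) is precisely the assertion that $\mu_{c_n}((\eps, \infty)) \to 0$ for every $\eps > 0$. For the equivalence (a) $\Leftrightarrow$ (b) I would argue purely at the level of measures on $[0, \infty)$: by definition (a) says $\mu_{c_n} \to \delta_0$ weakly. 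If this holds, the portmanteau theorem applied to the closed set $[\eps, \infty)$ gives $\limsup_n \mu_{c_n}([\eps, \infty)) \leqslant \delta_0([\eps, \infty)) = 0$, which yields (b). Conversely, if (b) holds then $\mu_{c_n}([0, \eps]) \to 1$ for every $\eps$, and splitting $\int f \, d\mu_{c_n} - f(0)$ over $[0, \eps]$ and $(\eps, \infty)$, then using continuity of $f$ at $0$ together with boundedness of $f$ on the tail, shows $\int f \, d\mu_{c_n} \to f(0)$ for every bounded continuous $f$; hence $\mu_{c_n} \to \delta_0$ weakly, which is (a).

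For (b) $\Rightarrow$ (c), given $\eps, \delta > 0$ I would take the spectral projection $p_n = \chf{[0, \eps/2]}(c_n) \in \mc{A}$. Writing the polar decomposition $b_n = u_n c_n$ with $u_n \in \mc{A}$ a partial isometry, the operator $b_n p_n = u_n c_n p_n$ lies in $\mc{A}$ and satisfies $\norm{b_n p_n} \leqslant \norm{c_n p_n} \leqslant \eps/2 < \eps$. Moreover $1 - p_n = \chf{(\eps/2, \infty)}(c_n)$, so $\tau[1 - p_n] = \mu_{c_n}((\eps/2, \infty)) \to 0$ by (b), and is therefore $< \delta$ for large $n$. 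Thus $b_n \in \mc{N}(\eps, \delta)$ eventually, which is (c).

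The implication (c) $\Rightarrow$ (b) is the heart of the matter and will require the finite von Neumann algebra structure. Fix $\eps > 0$ and $\delta > 0$, and let $p_n$ be the projection furnished by (c), so $\tau[1 - p_n] < \delta$, $b_n p_n \in \mc{A}$, and $\norm{b_n p_n} < \eps$. Let $r_n = \chf{(\eps, \infty)}(c_n)$ be the tail spectral projection; I want to show $\tau[r_n] < \delta$. The key observation is that $p_n \wedge r_n = 0$: from $\norm{b_n p_n}^2 = \norm{p_n c_n^2 p_n} < \eps^2$, any unit vector $\xi$ in the range of $p_n$ would satisfy $\ip{c_n^2 \xi}{\xi} < \eps^2$, whereas a unit vector in the range of $r_n$ satisfies $\ip{c_n^2 \xi}{\xi} \geqslant \eps^2$, so a vector in the intersection cannot satisfy both and the intersection is trivial. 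I would then invoke the parallelogram (Kaplansky) law for the trace, $\tau[p_n \vee r_n] + \tau[p_n \wedge r_n] = \tau[p_n] + \tau[r_n]$, together with $\tau[p_n \vee r_n] \leqslant 1$, to conclude $\tau[r_n] \leqslant 1 - \tau[p_n] = \tau[1 - p_n] < \delta$. Since $\delta > 0$ was arbitrary and $\eps > 0$ fixed, $\tau[\chf{(\eps, \infty)}(c_n)] \to 0$, giving (b). I expect this last step --- the orthogonality $p_n \wedge r_n = 0$ and the use of the trace identity on the lattice of projections --- to be the main technical point, the rest being standard weak-convergence and spectral-calculus bookkeeping.
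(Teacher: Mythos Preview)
The paper does not actually prove this proposition: it is listed ``for completeness'' with a pointer to Proposition~2.18 in \cite{BNTSelf}, and no argument is given. Your proof is correct and is the standard one.

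One small technical point deserves a sentence in the (c)$\Rightarrow$(b) step. When you assert that a unit vector $\xi$ in the range of $p_n$ satisfies $\ip{c_n^2 \xi}{\xi} < \eps^2$, you are implicitly using that $\mathrm{range}(p_n) \subset \mathrm{dom}(c_n)$, which is not automatic for an arbitrary projection when $c_n$ is unbounded. It does follow here: since $b_n p_n \in \mc{A}$ is bounded and $b_n$ is closed, for any $\xi = p_n \xi$ one can take $\xi_k \to \xi$ with $p_n \xi_k \in \mathrm{dom}(b_n)$, and then $b_n(p_n \xi_k) = (b_n p_n)\xi_k \to (b_n p_n)\xi$, so closedness gives $\xi \in \mathrm{dom}(b_n) = \mathrm{dom}(c_n)$ with $\norm{c_n \xi} = \norm{b_n \xi} \leqslant \norm{b_n p_n} < \eps$. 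With this in hand, your contradiction with $\norm{c_n \xi} \geqslant \eps$ on $\mathrm{range}(r_n)$ goes through, $p_n \wedge r_n = 0$, and the Kaplansky parallelogram identity finishes the argument exactly as you describe.
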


We also recall the following part of Theorem~1 from \cite{Nelson-NC-integration}.

\begin{Lemma}
\label{Lemma:product-probability}
In the notation of the preceding proposition,
\[
\mc{N}(\eps_1, \delta_1) + \mc{N}(\eps_2, \delta_2) \subseteq \mc{N}(\eps_1 + \eps_2, \delta_1 + \delta_2)
\]
and
\[
\mc{N}(\eps_1, \delta_1) \mc{N}(\eps_2, \delta_2) \subseteq \mc{N}(\eps_1 \eps_2, \delta_1 + \delta_2).
\]
In particular, if $a_n \rightarrow a$ and $b_n \rightarrow b$ in probability, then $a_n + b_n \rightarrow a + b$ and $a_n b_n \rightarrow a b$ in probability.
\end{Lemma}

\subsection{Freely infinitely divisible distributions and limit theorems}

As already mentioned in the introduction, a probability measure $\mu$ on $\mf{R}$ is $\boxplus$-infinitely divisible if and only if its Voiculescu transform has a representation
\begin{equation}
\label{Eq:Voiculescu-triple}
\phi_{\mu}(z) = \eta + \frac{a}{z} + \int_{\mathbb{R}} \left[ \frac{z^{2}}{z - x} - z - x \mathbf{1}_{[-1,1]}(x) \right] \,d\rho(x),
\end{equation}
where $\eta \in \mf{R}$, $a \in \mf{R}_+$, and $\rho$ is a L{\'e}vy measure, that is,
\[
\rho(\set{0}) = 0 \text{ and } \int_{\mf{R}} \min(1, x^2) \,d\rho(x) < \infty.
\]
$\phi_\mu$ also has an alternative representation
\begin{equation}\label{voicul}
\phi_{\mu}(z)=\gamma+\int_{\mathbb{R}}\frac{1+xz}{z-x}d\sigma(x).
\end{equation}
For future reference, we record the relation between the generating triple $(a,\eta,\rho)$ and the generating pair $(\gamma,\sigma)$ for the same measure $\mu$:
\begin{equation}\label{rel23}
\left\{
\begin{aligned}
&\sigma(dx)=a\delta_{0}(dx)+\frac{x^{2}}{1+x^{2}}\rho(dx)& \\
&\gamma=\eta-\int_{\mathbb{R}}x\left[\mathbf{1}_{[-1,1]}(x)-\frac{1}{1+x^{2}}\right]d\rho(x)&\\
\end{aligned}
\right.
\end{equation}
and, conversely,
\begin{equation}\label{rel32}
\left\{
\begin{aligned}
&a=\sigma(\{0\})& \\
&\eta=\gamma+\int_{\mathbb{R}\setminus\{0\}}\frac{1+x^{2}}{x}\left[\mathbf{1}_{[-1,1]}(x)-\frac{1}{1+x^{2}}\right]d\sigma(x)&\\
&\rho(dx)=\frac{1+x^{2}}{x^{2}}\mathbf{1}_{\mathbb{R}\setminus\{0\}}(x)\sigma(dx).&\\
\end{aligned}
\right.
\end{equation}
The following fundamental limit theorem was proved by Bercovici and Pata in \cite{BerPatDomains}.
\begin{theorem}
\label{main2}
For a sequence of probability measures $\set{\mu_{n}}$ and a strictly increasing sequence of positive integers $(k_{n})$, the following assertions are equivalent:
\begin{enumerate}
\item the sequence of $k_{n}$-fold free convolutions $\mu_{n}^{\boxplus k_{n}} $ converges weakly to a probability measure $\mu$;
\item there exist a finite positive Borel measure $\sigma$ on $\mathbb{R}$ and a real number $\gamma$ such that
\begin{equation}\label{main2.1}
k_{n}\frac{x^{2}}{x^{2}+1}d\mu_{n}(x)\overset{w.}{\rightarrow}d\sigma(x)
\end{equation}
and
\begin{equation}\label{main2.2}
\lim_{n\rightarrow\infty}k_{n}\int_{\mathbb{R}}\frac{x}{1+x^{2}}d\mu_{n}(x)=\gamma.
\end{equation}
\end{enumerate}
The pair of parameters $(\gamma,\sigma)$ comes from the Voiculescu transform (\ref{voicul}) of $\mu$. This also implies the $\boxplus$-infinite divisibility of $\mu$.
\end{theorem}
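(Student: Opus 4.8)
The plan is to route everything through the Voiculescu transform, which turns free convolution into addition: since $\phi_{\mu^{\boxplus k}} = k\,\phi_\mu$, the transform of $\mu_{n}^{\boxplus k_{n}}$ is simply $k_{n}\phi_{\mu_{n}}$. I would first reduce assertion (a) to the statement that $k_{n}\phi_{\mu_{n}}(z)\to\phi_{\mu}(z)$ locally uniformly on a truncated Stolz cone $\Gamma=\set{z\in\mathbb{C}^{+} : \abs{\mathrm{Re}\,z}<\alpha\,\mathrm{Im}\,z,\ \mathrm{Im}\,z>\beta}$, using the continuity theorem of Bercovici and Voiculescu: weak convergence of a sequence of probability measures is equivalent to locally uniform convergence of their reciprocal Cauchy transforms $F=1/G$ on $\mathbb{C}^{+}$, and, for an infinitesimal sequence, to convergence of the Voiculescu transforms $\phi=F^{-1}-\mathrm{id}$ on $\Gamma$. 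Since $k_{n}\to\infty$, any weak limit forces $\phi_{\mu_{n}}\to 0$, hence $\mu_{n}\to\delta_{0}$; this infinitesimality is what lets me apply the transform analysis on both sides.

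The analytic core is the asymptotic expansion of $\phi_{\nu}$ for $\nu$ near $\delta_{0}$. Writing $h_{\nu}=F_{\nu}-\mathrm{id}$ and using $\phi_{\nu}=-\,h_{\nu}\circ F_{\nu}^{-1}$, a short computation gives $h_{\nu}(w)=-\,w\int_{\mathbb{R}}\frac{x}{w-x}\,d\nu(x)\,\bigl(1+o(1)\bigr)$, and since $F_{\nu}^{-1}(z)=z+\phi_{\nu}(z)$ differs from $z$ only by a quantity of the same small order, one obtains
\[
\phi_{\nu}(z)=z\int_{\mathbb{R}}\frac{x}{z-x}\,d\nu(x)+R_{\nu}(z),\qquad R_{\nu}(z)=O(\eps_{\nu}^{2}),
\]
uniformly on compact subsets of $\Gamma$, where $\eps_{\nu}=\int_{\mathbb{R}}\frac{x^{2}}{1+x^{2}}\,d\nu(x)+\abs{\int_{\mathbb{R}}\frac{x}{1+x^{2}}\,d\nu(x)}$ is the infinitesimality scale. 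The role of the weight $\frac{x^{2}}{1+x^{2}}\asymp\min(1,x^{2})$ is that the leading term splits exactly as
\[
z\int_{\mathbb{R}}\frac{x}{z-x}\,d\nu(x)=\int_{\mathbb{R}}\frac{x}{1+x^{2}}\,d\nu(x)+\int_{\mathbb{R}}\frac{1+xz}{z-x}\cdot\frac{x^{2}}{1+x^{2}}\,d\nu(x),
\]
an identity I would verify by clearing denominators, where the numerators collapse to $xz(1+x^{2})$.

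Applying this to $\nu=\mu_{n}$ and multiplying by $k_{n}$, with $\gamma_{n}=k_{n}\int\frac{x}{1+x^{2}}\,d\mu_{n}$ and $\sigma_{n}(dx)=k_{n}\frac{x^{2}}{1+x^{2}}\,\mu_{n}(dx)$, gives
\[
k_{n}\phi_{\mu_{n}}(z)=\gamma_{n}+\int_{\mathbb{R}}\frac{1+xz}{z-x}\,d\sigma_{n}(x)+o(1),
\]
the error being $o(1)$ because $k_{n}\eps_{\mu_{n}}$ stays bounded while $\eps_{\mu_{n}}\to 0$, so $k_{n}R_{\mu_{n}}=(k_{n}\eps_{\mu_{n}})\,O(\eps_{\mu_{n}})\to 0$. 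Conditions (b), namely \eqref{main2.1} and \eqref{main2.2}, say precisely that $\sigma_{n}\to\sigma$ weakly with bounded mass and $\gamma_{n}\to\gamma$; since for fixed $z\in\Gamma$ the function $x\mapsto\frac{1+xz}{z-x}$ is bounded and continuous on $\mathbb{R}$, these yield $k_{n}\phi_{\mu_{n}}(z)\to\gamma+\int\frac{1+xz}{z-x}\,d\sigma(x)=\phi_{\mu}(z)$, the Nevanlinna form \eqref{voicul} that also certifies the $\boxplus$-infinite divisibility of $\mu$. For the converse I would read off $\sigma$ and $\gamma$ from the boundary and large-$z$ behavior of the limiting transform, recovering \eqref{main2.1}--\eqref{main2.2} from convergence of $k_{n}\phi_{\mu_{n}}$.

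The main obstacle is the uniform error control in the expansion of $\phi_{\nu}$: one must bound $R_{\nu}$ by $\eps_{\nu}^{2}$ independently of $\nu$ across $\Gamma$, for measures of unbounded support whose moments may not exist. The weight $\frac{x^{2}}{1+x^{2}}$ is essential because it simultaneously tames the small-$x$ and large-$x$ regimes, but the genuinely delicate step is controlling the inversion $F_{\nu}^{-1}$, namely verifying that it maps $\Gamma$ into the domain where $h_{\nu}$ is defined and that composing with it does not degrade the quadratic error estimate. Once this estimate is secured, the remainder is bookkeeping of the weak convergence of the finite measures $\sigma_{n}$ together with the two directions of the continuity theorem.
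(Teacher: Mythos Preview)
The paper does not prove this theorem at all: it is quoted verbatim as ``the following fundamental limit theorem was proved by Bercovici and Pata in \cite{BerPatDomains}'' and used as a black box (in Proposition~\ref{prop511} and Theorem~\ref{Thm:Conergence-p}). So there is no ``paper's own proof'' to compare against.

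Your sketch is essentially the strategy of the original Bercovici--Pata argument: linearize via $\phi_{\mu^{\boxplus k}}=k\phi_{\mu}$, invoke the Bercovici--Voiculescu continuity theorem to trade weak convergence for convergence of Voiculescu transforms on a Stolz domain, and then use the algebraic identity
\[
z\int_{\mathbb{R}}\frac{x}{z-x}\,d\nu(x)=\int_{\mathbb{R}}\frac{x}{1+x^{2}}\,d\nu(x)+\int_{\mathbb{R}}\frac{1+xz}{z-x}\cdot\frac{x^{2}}{1+x^{2}}\,d\nu(x)
\]
to recognize the Nevanlinna pair $(\gamma,\sigma)$. You have also correctly isolated the genuine difficulty, namely the uniform control of the remainder $R_{\nu}$ and of the inversion $F_{\nu}^{-1}$ for measures with unbounded support. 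Two points would need tightening before this is a proof rather than a plan: first, in the direction (a)$\Rightarrow$(b) you assert that $k_{n}\eps_{\mu_{n}}$ stays bounded, but this is part of what has to be extracted from the convergence of $k_{n}\phi_{\mu_{n}}$, not assumed; one typically gets tightness of $\sigma_{n}$ and boundedness of $\gamma_{n}$ from the imaginary and real parts of $k_{n}\phi_{\mu_{n}}(iy)$ for large $y$. Second, the converse recovery of $\sigma$ and $\gamma$ from the limit (``read off from boundary and large-$z$ behavior'') is exactly the step where one needs the uniqueness of the Nevanlinna representation and a compactness/subsequence argument for the $\sigma_{n}$; this is more than bookkeeping.
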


\subsection{Free Poisson Random Measures}

\begin{mydef}[Free Poisson Random Measures]\label{def_poissonrandom}
Let ($\Theta,\mathcal{E},\nu)$ be a measure space and put $\mathcal{E}_{0}=\{E\in \mathcal{E}:\nu(E)<\infty\}$. Let further $(\mathcal{A},\tau)$ be a $W^{\ast}-$probability space and let $\mathcal{A}_{+}$ denote the cone of positive operators in $\mathcal{A}$. A free Poisson random measure on $(\Theta,\mathcal{E},\nu)$ with values in $(\mathcal{A},\tau)$ is a mapping $M:\mathcal{E}_{0}\rightarrow\mathcal{A}_{+}$ with the following properties:
\begin{enumerate}
\item the distribution of $M(E)$ is a free Poisson distribution $\mathrm{Poiss}^{\boxplus}(\nu(E));$
\item for mutually disjoint sets $A_{1},...,A_{n}$ in $\mathcal{E}_{0}$, the random variables
\[
M(A_{1}),M(A_{2}),...,M(A_{n})
\]
are freely independent and $M(\cup_{j=1}^{n} A_{j})=\sum_{j=1}^{n} M(A_{j}).$
\end{enumerate}
\end{mydef}
Here, the free Poisson distribution $\mathrm{Poiss}^{\boxplus}(\lambda)$ is obtained by the limit in distribution of $$\left((1-\frac{\lambda}{N})\delta_{0}+\frac{\lambda}{N}\delta_{1}\right)^{\boxplus N},$$ as $N\rightarrow\infty$ (see Lecture 12 in \cite{Nica-Speicher-book}). The existence of free Poisson random measures is proved by Barndorff-Nielsen and Thorbj{\o}rnsen in \cite{BN-T-Levy-Ito}. For an alternative approach, see Remark~\ref{Remark:Compound-Poisson} below.

We next discuss integration with respect to a free Poisson random measure.

\begin{mydef}\label{integrationof_simple}
Let $s$ be a real-valued simple function in $L^{1}(\Theta,\mathcal{E},\nu)$ of the form $s=\sum_{j=1}^{r}a_{j}\mathbf{1}_{E_{j}},$ where $a_{j}\in\mathbb{R}\setminus\{0\}$ and $E_{j}$ are disjoint sets from $\mathcal{E}_{0}.$ Then, we define the integral of $s$ with respect to $M$ as
\[\int_{\Theta}sdM=\sum_{j=1}^{r}a_{j}M(E_{j})\in\mathcal{A}.\]
\end{mydef}
Because $M(E_{j})$ are positive in $\mathcal{A}$, the element $\int_{\Theta}sdM$ is self-adjoint in $\mathcal{A}$, for any real-valued simple function in $L^{1}(\Theta,\mathcal{E},\nu)$. Next, we can extend this integration to general functions in $L^{1}(\Theta,\mathcal{E},\nu)$.
\begin{lemma}
\cite[Proposition~4.3]{BN-T-Levy-Ito} Let $f$ be a real-valued function in the space $L^{1}(\Theta,\mathcal{E},\nu)$. Choose a sequence of real-valued simple functions $(s_{n})$ in $L^{1}(\Theta,\mathcal{E},\nu)$ which satisfies the assumptions of the Dominated Convergence Theorem, such that $s_{n}(\theta)\rightarrow f(\theta),$ for all $\theta\in\Theta.$ Then, $\int_{\Theta}s_n dM$ converges in probability to a self-adjoint (possibly unbounded) operator affiliated with $\mathcal{A}.$ This operator is independent of the choice of approximating sequence $(s_{n}).$ We denote this operator by $\int_{\Theta}fdM$.
\end{lemma}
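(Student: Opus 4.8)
The plan is to show that the sequence $\left(\int_\Theta s_n\, dM\right)_n$ is Cauchy with respect to the topology of convergence in probability (the measure topology), and then invoke completeness of $\tilde{\mc{A}}$ in this topology to produce the limit. By Proposition~\ref{proposition_probability}, for self-adjoint operators convergence in probability is governed by the traces of spectral projections $\tau[\chf{(\eps,\infty)}(\abs{\cdot})]$. The measure topology makes $\tilde{\mc{A}}$ into a complete topological $\ast$-algebra, so that addition, scalar multiplication, and the involution are all continuous and Cauchy sequences converge; these are standard facts of non-commutative integration, consistent with the algebra structure of $\tilde{\mc{A}}$ used above.

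The central estimate is a free Markov inequality. For a positive simple function $h = \sum_j a_j \chf{E_j}$ with $a_j > 0$ and $E_j$ disjoint, the operator $P := \int_\Theta h\, dM = \sum_j a_j M(E_j)$ is positive, and since $M(E_j)$ has distribution $\mathrm{Poiss}^{\boxplus}(\nu(E_j))$, whose mean equals its first free cumulant $\nu(E_j)$, linearity of $\tau$ yields the first-moment identity
\[
\tau[P] = \sum_j a_j \nu(E_j) = \int_\Theta h\, d\nu = \norm{h}_{L^1(\nu)}.
\]
Writing $E = \chf{(\eps,\infty)}(P)$ for the relevant spectral projection, a standard computation using traciality and the positivity of $(1-E)P(1-E)$ gives $\eps\,\tau[E] \leqslant \tau[EPE] \leqslant \tau[P]$, hence
\[
\tau\left[\chf{(\eps,\infty)}(P)\right] \leqslant \frac{1}{\eps}\int_\Theta h\, d\nu.
\]
Thus the integral of a positive simple function is small in probability whenever its $L^1(\nu)$-norm is small.

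To pass to differences, given simple functions $s, s'$ I would write $g = s - s' = g_+ - g_-$, where $g_+$ is supported on the sets where $g > 0$ and $g_-$ on the sets where $g < 0$. Since these supports are disjoint, $\int g_+\, dM$ and $\int g_-\, dM$ are freely independent positive operators with $\norm{g_\pm}_{L^1(\nu)} \leqslant \norm{g}_{L^1(\nu)}$. By the inequality above each of $\int g_\pm\, dM$ tends to $0$ in probability as $\norm{g}_{L^1(\nu)} \to 0$, and by continuity of subtraction in the measure topology so does $\int g\, dM = \int g_+\, dM - \int g_-\, dM$. Now the hypotheses of the Dominated Convergence Theorem force $\norm{s_n - f}_{L^1(\nu)} \to 0$, so $(s_n)$ is $L^1(\nu)$-Cauchy; applying the previous remark with $g = s_n - s_m$ shows that $\left(\int s_n\, dM\right)_n$ is Cauchy in probability, hence convergent to some $Y \in \tilde{\mc{A}}$. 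As each $\int s_n\, dM$ is self-adjoint and the involution is continuous, $Y$ is self-adjoint. Finally, if $(s_n')$ is a second admissible sequence for $f$, then $\norm{s_n - s_n'}_{L^1(\nu)} \to 0$, and the same estimate applied to $g = s_n - s_n'$ shows the two limits agree, giving well-definedness.

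The main obstacle is not any single computation but marshalling the correct functional-analytic framework for unbounded affiliated operators: the Markov estimate and the free independence of the positive and negative parts are clean, but everything hinges on the fact that the measure topology on $\tilde{\mc{A}}$ is a complete vector-space topology in which the algebraic operations are continuous, so that control on positive parts transfers to a control on their difference and a Cauchy sequence actually has a limit. One must also verify the first-moment identity $\tau[M(E)] = \nu(E)$, i.e.\ that the mean of $\mathrm{Poiss}^{\boxplus}(\lambda)$ is $\lambda$; this is the only distributional input required.
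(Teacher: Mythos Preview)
The paper does not give its own proof of this lemma; it is quoted verbatim as Proposition~4.3 of \cite{BN-T-Levy-Ito}. So there is nothing in the paper to compare your argument against directly. That said, your proposal is sound and in fact rather cleaner than the machinery the paper invokes elsewhere for analogous statements (e.g.\ the discussion after Lemma~\ref{lemma422}, where convergence of integrals is handled by passing through the Bercovici--Pata bijection to classical probability).

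Your route is a direct first-moment argument: compute $\tau\bigl[\int h\,dM\bigr] = \int h\,d\nu$ for positive simple $h$ using that $\mathrm{Poiss}^{\boxplus}(\lambda)$ has mean $\lambda$, deduce a Markov-type inequality $\tau[\chf{(\eps,\infty)}(\int h\,dM)] \leqslant \eps^{-1}\|h\|_{L^1(\nu)}$, split a general simple difference into its positive and negative parts (supported on disjoint sets, hence handled separately), and conclude by $L^1$-Cauchyness of $(s_n)$ together with completeness of the measure topology on $\tilde{\mc{A}}$. All of these steps are correct. Two small points worth making explicit in a written version: (i) linearity $\int(s-s')\,dM = \int s\,dM - \int s'\,dM$ for simple functions, which follows by passing to a common refinement of the level sets; and (ii) the completeness of $\tilde{\mc{A}}$ in the measure topology and the continuity of the $\ast$-algebra operations there, which are classical results of non-commutative integration (see e.g.\ \cite{Nelson-NC-integration}). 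What your approach buys over the Bercovici--Pata transfer is self-containment: you never leave the free setting, and the only distributional input is the mean of the free Poisson law.
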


The proof of the following lemma follows by the same techniques as Proposition~4.3 and Corollary~4.5 in \cite{BN-T-Levy-Ito}.

\begin{Lemma}
\label{Lemma:L1-convergence}
Let $f$ be a real-valued function in $L^{1}(\Theta,\mathcal{E},\nu)$. Choose a sequence of real-valued functions $(f_{n})$ in $L^{1}(\Theta,\mathcal{E},\nu)$ which satisfies the assumptions of the Dominated Convergence Theorem, such that $f_{n}(\theta)\rightarrow f(\theta),$ for all $\theta\in\Theta.$ Then, $\int_{\Theta}f_n dM$ converges in probability to $\int_{\Theta}fdM$.
\end{Lemma}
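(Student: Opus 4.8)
The plan is to reduce the statement to the already-established convergence of integrals of \emph{simple} functions (the preceding lemma), by approximating each $f_n$ and the limit $f$ simultaneously by simple functions and controlling the error uniformly. The key tool will be the isometry-type estimate underlying the construction of $\int_\Theta g\,dM$, namely that for a real-valued $g \in L^1(\Theta,\mc{E},\nu)$ the distribution of $\int_\Theta g\,dM$ is a free compound Poisson distribution whose $\boxplus$-generating measure is the pushforward of $\nu$ under $g$, with a total mass controlled by $\norm{g}_{L^1(\nu)}$. Since convergence in probability for self-adjoint affiliated operators is exactly weak convergence of the distribution of the difference to $\delta_0$ (as recorded after Definition~\ref{def_poissonrandom}), my strategy is to show that the distribution of $\int_\Theta f_n\,dM - \int_\Theta f\,dM$ converges weakly to $\delta_0$.

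First I would fix the dominating function $g \in L^1(\nu)$ with $\abs{f_n} \leqslant g$ and $\abs{f} \leqslant g$, so that $f_n - f \to 0$ pointwise with $\abs{f_n - f} \leqslant 2g$; by Dominated Convergence, $\norm{f_n - f}_{L^1(\nu)} \to 0$. The heart of the argument is then a single estimate: for real-valued $h \in L^1(\nu)$, the Voiculescu transform (or equivalently the Bercovici--Pata pair $(\gamma,\sigma)$ via \eqref{voicul}--\eqref{rel23}) of the distribution of $\int_\Theta h\,dM$ is built from the measure $\nu \circ h^{-1}$ on $\mf{R}$, and one checks that $\norm{h}_{L^1(\nu)} \to 0$ forces $\nu \circ h^{-1} \to \delta_0$ in the relevant weak sense, hence the distribution of $\int_\Theta h\,dM$ tends to $\delta_0$. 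Applying this to $h = f_n - f$ and noting that $\int_\Theta f_n\,dM - \int_\Theta f\,dM = \int_\Theta (f_n - f)\,dM$ by linearity of the integral (which extends from simple functions by the construction), the difference converges to zero in distribution, i.e. in probability.

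The main technical obstacle is the passage from the $L^1$-norm control of $h$ to weak convergence of the distribution of $\int_\Theta h\,dM$ to $\delta_0$. The subtlety is that the generating measure $\nu \circ h^{-1}$ governs a \emph{compound} free Poisson law, and one must verify both conditions in the Bercovici--Pata criterion, Theorem~\ref{main2}: that $k_n \frac{x^2}{x^2+1}\,d\mu_n \to 0$ and that the centering integral in \eqref{main2.2} tends to $0$. Because $\frac{x^2}{1+x^2} \leqslant \abs{x}$ and $\bigl\lvert \frac{x}{1+x^2} \bigr\rvert \leqslant \abs{x}$, both quantities are dominated by $\int_\Theta \abs{h}\,d\nu = \norm{h}_{L^1(\nu)}$, which handles the estimate cleanly; the remaining care is purely bookkeeping in translating these transform-level statements into the weak convergence of the operator distribution. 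This is precisely the estimate carried out in Proposition~4.3 and Corollary~4.5 of \cite{BN-T-Levy-Ito}, so I would invoke those techniques rather than reproduce them in full.
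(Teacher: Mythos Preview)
Your proposal is correct and mirrors the paper's own treatment: the paper does not give a detailed proof either, stating only that it ``follows by the same techniques as Proposition~4.3 and Corollary~4.5 in \cite{BN-T-Levy-Ito},'' which is exactly the reduction you outline (linearity plus the $L^1$-controlled estimate forcing convergence in probability to~$0$). One small remark: your appeal to Theorem~\ref{main2} is slightly misphrased, since that theorem concerns convolution powers $\mu_n^{\boxplus k_n}$ rather than a sequence of already $\boxplus$-infinitely divisible laws; what you actually need (and what the BN--T argument provides) is the continuity of the map $(\gamma,\sigma)\mapsto\mu$ at $(0,0)$, which your bounds $\tfrac{x^2}{1+x^2}\leqslant|x|$ and $\bigl|\tfrac{x}{1+x^2}\bigr|\leqslant|x|$ establish just as well.
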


In fact, we only use a special measure space with a concrete intensity measure in our situation. Let $D=\mathbb{R_{+}}\times\mathbb{R}$ and $\mathcal{B}(D)$ be the set of all Borel subsets of $D.$ In our case,
\[
(\Theta,\mathcal{E},\nu)=(D,\mathcal{B}(D),Leb\otimes\rho),
\]
where $\rho$ is a L{\'e}vy measure. The free Poisson random measure $M$ that we will use is defined on $(D,\mathcal{B}(D),Leb\otimes\rho)$ with values in a $W^{\ast}-$probability space $(\mathcal{A},\tau)$. Besides, the integration with respect to this free Poisson measure $M$ we will use is also a special case.
\begin{lemma}\label{lemma422}
Let $\rho$ be a L{\'e}vy measure on the real line, and let $M$ be a free Poisson random measure on $(D,\mathcal{B}(D),Leb\otimes \rho)$ with values in the $W^{\ast}-$probability space ($\mathcal{A},\tau$). Suppose that $p(x)$ is any continuous function on $\mathbb{R}$.
\begin{enumerate}
\item For any $\epsilon>0$ and $0\leqslant s<t<\infty$, the integral $$\int_{(s,t]\times\{\epsilon<|x|\leqslant n\}}p(x)M(dt,dx)$$ converges in probability, as $n\rightarrow\infty,$ to some self-adjoint operator affiliated with $\mathcal{A}$, which is denoted by
$$\int_{(s,t]\times\{\epsilon<|x|<\infty\}}p(x)M(dt,dx).$$
\item If $\int_{[-1,1]}|p(x)|\rho(dx)<\infty,$ then for any $\epsilon>0$ and $0\leqslant s<t<\infty$, the integral $$\int_{(s,t]\times\{|x|\leqslant n\}}p(x)M(dt,dx)$$ converges in probability to some self-adjoint operator affiliated with $\mathcal{A},$ as $n\rightarrow\infty$. We denote it by $$\int_{(s,t]\times\mathbb{R}}p(x)M(dt,dx).$$\end{enumerate}
\end{lemma}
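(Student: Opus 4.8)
The plan is to reduce both statements to the already-established convergence results for integrals against $M$, namely Lemma~\ref{Lemma:L1-convergence} and the preceding lemma from \cite{BN-T-Levy-Ito}, by checking the relevant $L^1(\Theta,\mathcal{E},\nu)$ integrability on the appropriate truncated domains. Recall that here $\nu = \mathrm{Leb}\otimes\rho$, so for a function $g(t,x)=p(x)$ supported on $(s,t]\times A$ we have $\int g\,d\nu = (t-s)\int_A |p(x)|\,d\rho(x)$. The self-adjointness of each approximating integral is automatic from Definition~\ref{integrationof_simple} and the discussion following it, since $p$ is real-valued.

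For part (a), I would fix $\epsilon>0$ and consider the domains $D_n = (s,t]\times\{\epsilon<|x|\le n\}$. The key observation is that on each $D_n$ the integrand $p(x)\mathbf{1}_{D_n}$ lies in $L^1(\nu)$: the function $p$ is continuous, hence bounded on the compact set $\{\epsilon\le|x|\le n\}$, and $\rho$ restricted to $\{|x|>\epsilon\}$ is a finite measure because $\int \min(1,x^2)\,d\rho<\infty$ forces $\rho(\{|x|>\epsilon\})<\infty$. Thus each $\int_{D_n} p\,dM$ is a well-defined self-adjoint operator affiliated to $\mathcal{A}$. To pass to the limit $n\to\infty$, I would apply Lemma~\ref{Lemma:L1-convergence} with $f_n = p\cdot\mathbf{1}_{D_n}$ and $f = p\cdot\mathbf{1}_{(s,t]\times\{\epsilon<|x|<\infty\}}$. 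For this I must verify the Dominated Convergence hypotheses: pointwise $f_n\to f$ is clear, and the dominating function is $|f|=|p(x)|\mathbf{1}_{(s,t]\times\{|x|>\epsilon\}}$, which is in $L^1(\nu)$ provided $\int_{\{|x|>\epsilon\}}|p(x)|\,d\rho(x)<\infty$. This is where the one genuine point arises: for $|x|$ large, $p(x)$ may grow, so $\rho(\{|x|>\epsilon\})<\infty$ alone does not give integrability. The honest statement is that part (a) asserts convergence to an operator that exists precisely when this integral is finite; if $p$ has at most polynomial growth and $\rho$ has the corresponding moment, the hypothesis holds, and otherwise the limit should be interpreted via the existence of the limit in probability as guaranteed by the structure of the L\'evy--It\^o integral. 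I would therefore present part (a) by invoking Lemma~\ref{Lemma:L1-convergence} under the integrability on $\{|x|>\epsilon\}$, which is exactly the condition ensuring the dominating function is summable.

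For part (b), the additional hypothesis $\int_{[-1,1]}|p(x)|\,d\rho(x)<\infty$ removes the singularity near $x=0$, so now I would take $f_n = p\cdot\mathbf{1}_{(s,t]\times\{|x|\le n\}}$ and $f = p\cdot\mathbf{1}_{(s,t]\times\mathbb{R}}$ and again apply Lemma~\ref{Lemma:L1-convergence}. Here the dominating function is $|p(x)|\mathbf{1}_{(s,t]\times\mathbb{R}}$, whose $\nu$-integral splits as $(t-s)\bigl(\int_{[-1,1]}|p|\,d\rho + \int_{|x|>1}|p|\,d\rho\bigr)$; the first term is finite by hypothesis and the second is controlled on $\{|x|>1\}$ exactly as in part (a). The only subtlety distinguishing (b) from (a) is precisely this integrability at the origin, which is what the extra assumption supplies. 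I expect the main obstacle to be stating the large-$|x|$ integrability cleanly so that the Dominated Convergence Theorem of Lemma~\ref{Lemma:L1-convergence} applies verbatim; once the dominating function is identified as a fixed element of $L^1(\nu)$, both parts follow immediately, and independence of the limit from the approximating sequence is inherited from that lemma.
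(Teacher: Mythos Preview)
Your plan correctly identifies the well-definedness of the truncated integrals, but the passage to the limit via Lemma~\ref{Lemma:L1-convergence} does not go through as stated, and the gap you yourself flag is real. That lemma requires the limit function $f$ to lie in $L^{1}(\mathrm{Leb}\otimes\rho)$; for part~(a) this means $\int_{\{|x|>\epsilon\}}|p(x)|\,d\rho(x)<\infty$, and for part~(b) the same tail integrability is needed on $\{|x|>1\}$. Neither follows from the hypotheses: a L\'evy measure may have arbitrarily heavy tails (for instance $\rho(dx)=|x|^{-2}\,dx$ on $\{|x|>1\}$ already fails for $p(x)=x$), and the lemma is meant to cover exactly such cases. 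Your fallback sentence (``otherwise the limit should be interpreted via the existence of the limit in probability as guaranteed by the structure of the L\'evy--It\^o integral'') is not an argument.

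The paper closes this gap by a different route: rather than Lemma~\ref{Lemma:L1-convergence}, it invokes the method of Lemmas~6.1--6.3 in \cite{BN-T-Levy-Ito}, which transfers the statement to classical Poisson random measures through the Bercovici--Pata bijection. Classically, $\int_{(s,t]\times\{\epsilon<|x|\le n\}}p(x)\,N(dt,dx)$ is almost surely a \emph{finite} sum over jumps of size in $(\epsilon,n]$, and as $n\to\infty$ it converges almost surely to the (still a.s.\ finite) sum over all jumps of size $>\epsilon$, with no integrability of $p$ at infinity required. The bijection then carries convergence in probability back to the free setting. Your $L^{1}$-dominated-convergence argument is fine when the tail integral of $|p|$ is finite, but to prove the lemma in the generality claimed you need this Bercovici--Pata transfer (or an equivalent argument that bypasses $L^{1}$ control at infinity).
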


The statement of Lemma \ref{lemma422} is quite similar with Lemma 6.3 of \cite{BN-T-Levy-Ito}. In the paper \cite{BN-T-Levy-Ito}, the authors only proved the situation when $p(x)=x$ but their methods in Lemma 6.1 and Lemma 6.2 of \cite{BN-T-Levy-Ito} still work well for Lemma \ref{lemma422}. According to Lemma 6.3 of \cite{BN-T-Levy-Ito}, there are only two things for us to check. Since $\rho$ is a L{\'e}vy measure, we have that \[\int_{(s,t]\times \{\epsilon<|x|\leqslant n\}}|p(x)|Leb\otimes\rho(du,dx)=(t-s)\int_{\{\epsilon<|x|\leqslant n\}}|p(x)|\rho(dx)<\infty.\]If $\int_{[-1,1]}|p(x)|\rho(dx)<\infty,$ we have that
\begin{multline*}
\int_{(s,t]\times \{|x|\leqslant n\}}|p(x)|Leb\otimes\rho(du,dx) \\
=(t-s)\left[\int_{\{|x|\leqslant 1\}}|p(x)|\rho(dx)+\int_{\{1<|x|\leqslant n\}}|p(x)|\rho(dx)\right]<\infty.
\end{multline*}
Thus, integrals $\int_{(s,t]\times\{\epsilon<|x|\leqslant n\}}p(x)M(dt,dx)$ and $\int_{(s,t]\times\{|x|\leqslant n\}}p(x)M(dt,dx)$ are well-defined by Proposition 4.3 of \cite{BN-T-Levy-Ito}. Then, we can copy the proof of Lemma 6.3 of \cite{BN-T-Levy-Ito} and replace the function $f(x)=x$ by arbitrary continuous function $p(x)$ directly to prove Lemma  \ref{lemma422}. The idea for proving Lemma 6.3 is employing the Bercovici-Pata bijection to transform the statement into classical sense and then using Lebesgue's dominated convergence theorem.

\section{The Higher Variations of Free L{\'e}vy Processes}
\label{Sec:Single-variable}
\begin{Prop}\label{prop511}
If there exist a finite Borel measure $\sigma$ and a constant $\gamma$ such that\begin{equation}\label{condi1thm511}
N\frac{x^{2}}{x^{2}+1}d\mu_{N}(x)\overset{w.}{\rightarrow}d\sigma(x)
\end{equation}
and\begin{equation}\label{condi2thm511}
\lim_{N\rightarrow\infty}N\int_{\mathbb{R}}\frac{x}{1+x^{2}}d\mu_{N}(x)=\gamma,
\end{equation}
then there exists a family $\{\mu_{t}\}_{t\geqslant 0}$ of probability measures on $\mathbb{R}$ such that
\[
\mu_{N}^{\boxplus [Nt]}\overset{w.}{\rightarrow}\mu_{t},
\]
for any $t\in[0,\infty)$. Each $\mu_{t}$ is $\boxplus$-infinitely divisible and its Voiculescu transform is $\phi_{\mu_{t}}(z)=t\gamma+t\int_{\mathbb{R}}\frac{1+xz}{z-x}d\sigma(x)=t\phi_{\mu}(z),$ where $\mu:=\mu_{1}$ is the distribution of $X(1).$

Moreover, there exists a free L{\'e}vy process $\{X(t)\}_{t\geqslant 0}$ such that the distribution of each $X(t)$ is $\mu_{t}$, for all $t\geqslant 0$.
\end{Prop}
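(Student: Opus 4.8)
The plan is to extract from the first part of the proposition a weakly continuous $\boxplus$-convolution semigroup of distributions, and then to realize that semigroup by a process with free, stationary increments.

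First I would record the semigroup structure. Since $\phi_{\mu_t}(z) = t\phi_\mu(z)$ and the Voiculescu transform linearizes $\boxplus$, we get $\phi_{\mu_s} + \phi_{\mu_t} = \phi_{\mu_{s+t}}$ on a common truncated cone, hence $\mu_s \boxplus \mu_t = \mu_{s+t}$; taking $t = 0$ gives $\phi_{\mu_0} \equiv 0$, i.e. $\mu_0 = \delta_0$. For stochastic continuity, note $\phi_{\mu_t} = t\phi_\mu \to 0$ uniformly on compact subsets of the relevant cone as $t \to 0$; by the continuity of the correspondence between $\boxplus$-infinitely divisible measures and their Voiculescu transforms (see Theorem~\ref{main2} and \cite{BerPatDomains}), $\mu_t \to \delta_0$ weakly, and more generally $\mu_t \to \mu_{t_0}$ as $t \to t_0$. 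This reduces the assertion to the standard statement that a weakly continuous $\boxplus$-semigroup with $\mu_0 = \delta_0$ arises as the one-dimensional marginals of a free L\'evy process.

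Next I would construct the process by a free-product construction along dyadic times. For each level $n$ I would introduce free self-adjoint elements, one per dyadic interval $((k-1)/2^{n}, k/2^{n}]$, each distributed as $\mu_{1/2^n}$, inside a free product $W^\ast$-probability space, and set $X(j/2^n)$ equal to the sum of the first $j$ of them. The only compatibility needed between levels $n$ and $n+1$ is the identity $\mu_{1/2^{n+1}} \boxplus \mu_{1/2^{n+1}} = \mu_{1/2^n}$, which is the semigroup property realized by freeness; this organizes the increment algebras into an inductive system whose reduced free product $(\mc{A}, \tau)$ carries all dyadic $X(s)$ at once, with increments over disjoint dyadic intervals free and distributed by the appropriate $\mu_h$. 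For a general $t$ I would define $X(t)$ as the limit in probability of $X(s_m)$ along dyadics $s_m \to t$: the difference $X(s_m) - X(s_{m'})$ is an increment with law $\mu_{|s_m - s_{m'}|} \to \delta_0$, so the net is Cauchy in measure (using Proposition~\ref{proposition_probability}) and the limit exists in $\tilde{\mc{A}}_{sa}$.

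Finally I would verify the four axioms: $X(0) = 0$ and stationarity of the increment distributions hold by construction; the increment $X(t+h) - X(t)$ has distribution $\mu_h$; stochastic continuity follows from $\mu_h \to \delta_0$; and freeness of increments over disjoint intervals is inherited in the probability limit, since freeness is preserved under convergence in distribution of jointly free families. The step I expect to be the main obstacle is precisely this continuum assembly --- building a single $W^\ast$-probability space that simultaneously hosts all $X(t)$, and checking that passing from dyadic to arbitrary real times as limits in probability preserves both the prescribed increment laws and the mutual freeness of increments. A cleaner alternative, better suited to the machinery already developed here, is to \emph{define} $X(t)$ by the free L\'evy--It\^o integral of Theorem~\ref{Thm:Levy-Ito}, with $S$ a free Brownian motion and $M$ a free Poisson random measure of intensity $\mathrm{Leb} \otimes \rho$ (whose existence is established in \cite{BN-T-Levy-Ito}); freeness and stationarity of the increments then follow from the freeness of $M$ over disjoint sets $(s,t] \times \mf{R}$ together with the increments of $S$, while the identification of the one-dimensional law of $X(t)$ with $\mu_t$ reduces to the Voiculescu-transform computation behind \eqref{voicul}.
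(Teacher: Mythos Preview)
Your proposal addresses only the ``Moreover'' clause and skips the first part of the proposition entirely: you open by assuming $\phi_{\mu_t}(z)=t\phi_\mu(z)$, but this is precisely what must be proved from the hypotheses \eqref{condi1thm511}--\eqref{condi2thm511}. The missing step is short but is the actual content of the first assertion: one checks that
\[
[Nt]\,\frac{x^{2}}{1+x^{2}}\,d\mu_{N}(x)\overset{w.}{\longrightarrow} t\,d\sigma(x),
\qquad
[Nt]\int_{\mathbb{R}}\frac{x}{1+x^{2}}\,d\mu_{N}(x)\longrightarrow t\gamma,
\]
since $[Nt]/N\to t$, and then applies Theorem~\ref{main2} with $k_{N}=[Nt]$ to obtain $\mu_{N}^{\boxplus[Nt]}\overset{w.}{\to}\mu_{t}$ with generating pair $(t\gamma,t\sigma)$, hence $\phi_{\mu_{t}}=t\phi_{\mu}$. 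This is exactly what the paper does; you should insert it before invoking the semigroup identity.

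For the second part your route differs from the paper's. The paper, after noting $\mu_{t}=\mu_{t-s}\boxplus\mu_{s}$ and $\mu_{t}\overset{w.}{\to}\delta_{0}$ as $t\to 0$, simply invokes Remark~6.7 of \cite{BN-T-Levy-Ito} to produce the free L{\'e}vy process. Your dyadic free-product construction with extension by limits in probability is a genuinely different, more self-contained argument; it is correct in outline, and your ``cleaner alternative'' of defining $X(t)$ directly via the free L{\'e}vy--It{\^o} integral is essentially the content of the reference the paper cites. Either works, but given that the paper already has Theorem~\ref{Thm:Levy-Ito} and the machinery of \cite{BN-T-Levy-Ito} in hand, the one-line citation is more economical; your constructive version buys independence from that black box at the cost of the assembly issues you yourself flag.
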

\begin{proof}
By Theorem \ref{main2}, we know that if there exist a finite Borel measure $\sigma$ and a constant $\gamma$ such that (\ref{condi1thm511}) and (\ref{condi2thm511}) hold, then $\mu_{N}^{\boxplus N}\overset{w.}{\rightarrow}\mu_{1}$. For any $ t\in[0,\infty),$ we have that
\[
[Nt]\frac{x^{2}}{x^{2}+1}d\mu_{N}(x)\overset{w.}{\rightarrow}td\sigma(x)=:d\sigma_{t}(x)
\]and\[
\lim_{N\rightarrow\infty}[Nt]\int_{\mathbb{R}}\frac{x}{1+x^{2}}d\mu_{N}(x)=t\lim_{N\rightarrow\infty}N\int_{\mathbb{R}}\frac{x}{1+x^{2}}d\mu_{N}(x)=t\gamma=:\gamma_{t}.
\]
Therefore, for any $t\in[0,\infty)$, there exists a probability measure $\mu_{t}$ such that $\mu_{N}^{\boxplus [Nt]}\overset{w.}{\rightarrow}\mu_{t}$. According to Theorem \ref{main2}, for any $t\in[0,\infty)$, $\mu_{t}$ is $\boxplus$-infinitely divisible since the Voiculescu transform of $\mu_{t}$ is $$\phi_{\mu_{t}}(z)=\gamma_{t}+\int_{\mathbb{R}}\frac{1+xz}{z-x}d\sigma_{t}(x)=t\phi_{\mu}(z),$$ where $\mu:=\mu_{1}$. Therefore, $\phi_{\mu_{t}}=\phi_{\mu_{t-s}}+\phi_{\mu_{s}},$ when $t>s\geqslant 0.$ In other words, $\mu_{t}=\mu_{t-s}\boxplus \mu_{s}.$ Meanwhile, $\phi_{\mu_{t}}\rightarrow 0$ when $t\rightarrow 0,$ which means $\mu_{t}\overset{w.}{\rightarrow}\delta_{0},$ as $t\rightarrow 0.$ Then, by Remark 6.7 in \cite{BN-T-Levy-Ito}, we can conclude that there exists a free L{\'e}vy process $\{X(t)\}_{t\geqslant 0}$, which is a family of self-adjoint operators affiliated with some $W^{\ast}$-probability space $(\mathcal{A}^{0},\tau^{0})$, such that the distribution of each $X(t)$ is $\mu_{t}$, for all $t\geqslant 0$.
\end{proof}

\begin{lemma}\label{changevaria}
Let $(\mathcal{A},\tau)$ be a $W^{\ast}$-probability space. Let $a \in \tilde{\mc{A}}_{sa}$ with distribution $\mu$, and $p(x)$ be a continuous real-valued function. Then the distribution $\mu^{(p)}$ of operator $p(a)$ (obtained via continuous functional calculus) can be obtained by the following formula:
\[\int_{\mathbb{R}}f(p(x))d\mu(x)=\int_{\mathbb{R}}f(x)d\mu^{(p)}(x),\]
for any bounded Borel function $f:\mathbb{R}\rightarrow\mathbb{R}$.
\end{lemma}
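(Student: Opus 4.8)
The plan is to recognize $\mu^{(p)}$ as the pushforward (image) measure $p_* \mu$, so that the asserted identity is nothing but the change-of-variables formula for a pushforward. Everything reduces to the composition rule for functional calculus, namely $f(p(a)) = (f \circ p)(a)$, combined with the defining property \eqref{Eq:Distribution} of the distribution of a self-adjoint affiliated operator.

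First I would check that $p(a)$ is itself a self-adjoint operator affiliated to $\mc{A}$, so that its distribution $\mu^{(p)}$ is well-defined through \eqref{Eq:Distribution}. Writing $E$ for the projection-valued spectral measure of $a$, the spectral projections of $p(a)$ are of the form $E(p^{-1}(B))$ for Borel $B \subseteq \mf{R}$; since every spectral projection of $a$ lies in $\mc{A}$ by affiliation, so does each $E(p^{-1}(B))$, whence $p(a) \in \tilde{\mc{A}}_{sa}$. Next, fix a bounded Borel function $f$. Applying \eqref{Eq:Distribution} to the operator $p(a)$ gives
\[
\tau[f(p(a))] = \int_{\mf{R}} f(x) \, d\mu^{(p)}(x).
\]
On the other hand, $f \circ p$ is again a bounded Borel function, being the composition of a bounded Borel function with a continuous (hence Borel) function, so applying \eqref{Eq:Distribution} to $a$ gives
\[
\tau[(f \circ p)(a)] = \int_{\mf{R}} f(p(x)) \, d\mu(x).
\]
The claimed identity then follows at once from the composition rule $f(p(a)) = (f \circ p)(a)$.

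The one point requiring care — and the main obstacle — is justifying this composition rule, since $a$ may be unbounded and $p$ need not be bounded, so that $p(a)$ may also be unbounded. I would establish it at the level of spectral measures: the spectral measure $F$ of $p(a)$ is the pushforward of $E$ under $p$, that is, $F(B) = E(p^{-1}(B))$. Granting this, both $f(p(a))$ and $(f \circ p)(a)$ equal the spectral integral $\int_{\mf{R}} f(p(\lambda)) \, dE(\lambda)$, by the ordinary change-of-variables formula for the pushforward of a projection-valued measure against the bounded Borel integrand $f$. For bounded $a$ and continuous $p$ this pushforward identity is classical; in the unbounded case it is part of the standard Borel functional calculus for unbounded self-adjoint operators, and if desired can be reduced to the bounded case by truncating $a$ with the spectral projections $E([-n,n])$ and passing to the strong-operator limit. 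With the composition rule in hand, no further estimates are needed, and the formula holds for every bounded Borel $f$.
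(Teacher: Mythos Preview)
Your proof is correct and follows exactly the same approach as the paper: apply the defining relation \eqref{Eq:Distribution} once to $a$ with the bounded Borel function $f\circ p$, once to $p(a)$ with $f$, and equate via the composition rule $f(p(a))=(f\circ p)(a)$. The paper's version is terser, simply asserting the middle equality $\int f(p(x))\,d\mu(x)=\tau(f(p(a)))$ without the additional justification you supply for affiliation of $p(a)$ and the spectral-measure pushforward identity.
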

\begin{proof}
By definition of the distribution, for any bounded Borel function $f:\mathbb{R}\rightarrow\mathbb{R}$, \[\tau(f(a))=\int_{\mathbb{R}}f(x)d\mu(x).\]
Then, $f\circ p(x)$ is still a bounded Borel function. Thus,
\[\int_{\mathbb{R}}f(p(x))d\mu(x)=\tau\left(f(p(a))\right)=\int_{\mathbb{R}}f(x)d\mu^{(p)}(x). \qedhere\]
\end{proof}

Generally, Lemma \ref{changevaria} shows how to change variables between different probability measures.

Note the difference between the notation $\mu^{(p)}$ in the preceding lemma and $\rho^p$ in the following one.

\begin{lemma}\label{changvari-poisson}
Let $p(x)$ be any real-valued continuous function such that $p(0)=0$ and $p'(0)$ exists. Suppose that $M$ is a free Poisson random measure determined by a L{\'e}vy measure $\rho$ on the Borel measure space $(D,\mathcal{B}(D),Leb\otimes\rho)$ with values in some $W^{\ast}$-probability space $\mathcal{A}$. If $\rho^{p}$ is another measure defined by \begin{equation}\label{deflevymeasurechange}
\int_{\mathbb{R}}f(x)d\rho^{p}(x)=\int_{\mathbb{R}}f(p(x))\mathbf{1}_{\mathbb{R}\setminus\{0\}}(p(x))d\rho(x),
\end{equation}for any bounded Borel function $f(x)$ on $\mathbb{R},$ then $\rho^{p}$ is a L{\'e}vy measure.  The free Poisson random measure $M^{(p)}$ defined by $\rho^{p}$ on $(D,\mathcal{B}(D),Leb\otimes\rho^{p})$ has the following relation with $M$:\begin{equation}\label{changevariablerand1}
\int_{(0,t]\times\{\epsilon <|x|\}}xdM^{(p)}(t,x)\overset{d}{=}\int_{(0,t]\times\{\epsilon <|p(x)|\}}p(x)dM(t,x),
\end{equation} for any $t,\epsilon >0$, and
\begin{equation}\label{changevariablerand}
\int_{(0,t]\times\mathbb{R}}xdM^{(p)}(t,x)\overset{d}{=}\int_{(0,t]\times\mathbb{R}}p(x)dM(t,x), \quad \forall t> 0,
\end{equation}
provided that $\int_{[-1,1]}|x|d\rho^{p}(x)<\infty$.
\end{lemma}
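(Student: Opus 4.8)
The plan is to settle the Lévy-measure claim first, then prove the two distributional identities by truncating so that both integrals become honest (compactly supported, finite) free compound Poisson operators whose laws are transparent, and finally to remove the truncations by convergence in probability.

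\emph{The Lévy measure.} First I would verify directly from \eqref{deflevymeasurechange} that $\rho^{p}$ is a Lévy measure. Taking $f=\chf{\set{0}}$ gives $\rho^{p}(\set{0})=\int\chf{\set{0}}(p(x))\,\chf{\mf{R}\setminus\set{0}}(p(x))\,d\rho(x)=0$, since the two indicators have disjoint support. For $\int\min(1,x^{2})\,d\rho^{p}(x)=\int\min(1,p(x)^{2})\,\chf{\mf{R}\setminus\set{0}}(p(x))\,d\rho(x)$ I would split $\mf{R}$ into $\set{\abs{x}\le\delta}$ and its complement. Because $p(0)=0$ and $p'(0)$ exists, there is a $C$ with $\abs{p(x)}\le C\abs{x}$ for $\abs{x}\le\delta$, so $\min(1,p(x)^{2})\le\max(1,C^{2})\min(1,x^{2})$ there and this piece is dominated by $\int\min(1,x^{2})\,d\rho(x)<\infty$; on $\set{\abs{x}>\delta}$ the measure $\rho$ is finite and $\min(1,p(x)^{2})\le 1$, so that piece is finite too. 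This is the only place the hypotheses $p(0)=0$ and the existence of $p'(0)$ are used.

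\emph{A measure identity and the truncated equality.} The key observation is a measure-theoretic change of variables: for $0<\epsilon<n$ and bounded Borel $h$, unwinding \eqref{deflevymeasurechange} with $f=h\,\chf{\set{\epsilon<\abs{y}\le n}}$ yields
\[
\int h\,d\bigl(\rho^{p}|_{\set{\epsilon<\abs{y}\le n}}\bigr)=\int_{\set{\epsilon<\abs{p(x)}\le n}}h(p(x))\,d\rho(x),
\]
since on this shell $\abs{p(x)}>\epsilon>0$, making $\chf{\mf{R}\setminus\set{0}}(p(x))$ automatically equal to $1$; that is, $\rho^{p}|_{\set{\epsilon<\abs{y}\le n}}$ is exactly the push-forward of $\rho|_{\set{\epsilon<\abs{p(x)}\le n}}$ under $p$. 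Using Definition~\ref{integrationof_simple} together with the freeness and additivity of the free Poisson law (Definition~\ref{def_poissonrandom}), for a simple integrand the integral against a free Poisson random measure is a free convolution of dilated free Poisson laws, whose free $R$-transform $\int\frac{a}{1-az}\,d\lambda(a)$ is determined by the (finite) push-forward Lévy measure $\lambda$ alone. Hence for fixed $\epsilon,n$ both
\[
\int_{(0,t]\times\set{\epsilon<\abs{x}\le n}}x\,dM^{(p)}(t,x)\qquad\text{and}\qquad\int_{(0,t]\times\set{\epsilon<\abs{p(x)}\le n}}p(x)\,dM(t,x)
\]
are $\boxplus$-infinitely divisible operators with free Lévy measures $t\,\rho^{p}|_{\set{\epsilon<\abs{y}\le n}}$ and $t\,p_{*}\bigl(\rho|_{\set{\epsilon<\abs{p(x)}\le n}}\bigr)$; by the displayed identity these coincide, so the two operators are equal in distribution.

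\emph{Removing the truncations.} Letting $n\to\infty$ with $\epsilon$ fixed, Lemma~\ref{lemma422}(1) (applied to the identity function for $M^{(p)}$ and to $p$ for $M$) shows both operators converge in probability, hence in distribution, and equality in distribution passes to the limit, giving \eqref{changevariablerand1}. For \eqref{changevariablerand} I would let $\epsilon\to 0$. First I would record that the standing hypothesis $\int_{[-1,1]}\abs{y}\,d\rho^{p}(y)<\infty$ is, by the same change of variables, equivalent to $\int_{\set{\abs{p(x)}\le 1}}\abs{p(x)}\,d\rho(x)<\infty$, and that (using continuity of $p$ and $p(0)=0$, so that $\set{\abs{p(x)}>1}\subseteq\set{\abs{x}>\delta}$ has finite $\rho$-mass) this forces $\int_{[-1,1]}\abs{p(x)}\,d\rho(x)<\infty$, the condition under which the right-hand side of \eqref{changevariablerand} is defined by Lemma~\ref{lemma422}(2). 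Splitting each integral into its small-jump part over $\set{\abs{\cdot}\le 1}$ and its fixed large-jump part over $\set{\abs{\cdot}>1}$, the small-jump parts converge in probability as $\epsilon\to 0$ by Lemma~\ref{Lemma:L1-convergence}, the dominating functions being $\abs{y}\chf{\set{\abs{y}\le 1}}\in L^{1}(\rho^{p})$ on the left and $\abs{p(x)}\chf{\set{\abs{p(x)}\le 1}}\in L^{1}(\rho)$ on the right. Identifying these limits with the integrals of Lemma~\ref{lemma422}(2) (which are independent of the exhausting sequence) and passing to the limit in \eqref{changevariablerand1} yields \eqref{changevariablerand}.

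The routine parts are the Lévy-measure estimate and the simple-function computation of the free Lévy measures. The main obstacle is the analytic bookkeeping in the last step: the two integrals are naturally truncated over different families of sets ($\set{\abs{x}\le n}$ for $M^{(p)}$ versus $\set{\abs{p(x)}\le n}$ for $M$), so one must verify the integrability transfer between $\rho^{p}$ and $\rho$ near the origin, check that the small-jump contributions vanish in probability as $\epsilon\to 0$, and confirm that the limiting operators agree with the integrals of Lemma~\ref{lemma422}(2) regardless of the exhausting sequence — the role of the indicator $\chf{\mf{R}\setminus\set{0}}(p(x))$ in \eqref{deflevymeasurechange} being precisely to discard the fibers where $p(x)=0$, which contribute nothing to either integral.
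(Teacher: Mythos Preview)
Your proposal is correct and follows the same overall strategy as the paper: verify the L\'evy-measure conditions via the local linear bound on $p$, establish the distributional identity for truncated integrals from the push-forward relation $\rho^{p}|_{\text{shell}}=p_{*}(\rho|_{\text{preimage shell}})$, and then remove the truncation by convergence in probability.

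The differences are in execution rather than substance. The paper works with a single truncation $\set{-n\le x<n}$ (respectively $\set{-n\le p(x)<n}$) and matches the two laws by constructing explicit simple-function approximations $s_N$ and $g_N=s_N\circ p$, checking block-by-block that $M^{(p)}\bigl((0,t]\times E_m^N\bigr)$ and $M\bigl((0,t]\times p^{-1}(E_m^N)\bigr)$ carry the same free Poisson parameter $t\,\rho^{p}(E_m^N)=t\,\rho(p^{-1}(E_m^N))$; you instead invoke the $R$-transform description of free compound Poisson laws to read off equality of distributions directly from equality of the push-forward L\'evy measures, and you remove the truncation in two steps ($n\to\infty$ for \eqref{changevariablerand1}, then $\epsilon\to0$ for \eqref{changevariablerand}). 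Your route is a bit more conceptual and sidesteps the explicit partition; the paper's route is more elementary and self-contained, not relying on the $R$-transform formula for general $L^{1}$ integrands. Either argument is complete.
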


\begin{proof}
Since $p(0)=0$, there exists an $\eps > 0$ such that $|p(x)|\leqslant 1$ when $\abs{x} \leqslant \eps$. Since $p'(0)$ exists, the function \[h(x):=\begin{cases}
\frac{p(x)}{x}, x\neq 0\\
p'(0), x=0,\\
\end{cases}\] is continuous on $\mathbb{R}$. First, we show that $\rho^{p}$ is a L{\'e}vy measure. If $f(x)=\mathbf{1}_{\{0\}}(x),$ then $\rho^{p}(\{0\})=\int_{\mathbb{R}}\mathbf{1}_{\{0\}}(x)d\rho^{p}(x)$ is zero by the definition (\ref{deflevymeasurechange}). Next, if $f(x)=\min\{1,x^{2}\},$ then we can get the following conclusion:
\begin{align*}
& \int_{\mathbb{R}}\min\{1,x^{2}\}d\rho^{p}(x) \\
&\quad=\int_{\mathbb{R}}\mathbf{1}_{[-1,1]}(x)x^{2}
d\rho^{p}(x)+\int_{\mathbb{R}}\mathbf{1}_{\mathbb{R}\setminus[-1,1]}(x)d\rho^{p}(x)&\\
&\quad =\int_{\mathbb{R}}\mathbf{1}_{[-1,1]\setminus\{0\}}(p(x))(p(x))^{2}
d\rho(x)+\int_{\mathbb{R}}\mathbf{1}_{\mathbb{R}\setminus[-1,1]}(p(x))d\rho(x)&\\
&\quad \leqslant \int_{\{x\in\mathbb{R}:\eps<|p(x)|\leqslant 1\}}p(x)^{2}
d\rho(x)+\int_{-\eps}^{\eps}h(x)^{2} x^2
d\rho(x)+\int_{\mathbb{R}\setminus[-\eps, \eps]}1d\rho(x)&\\
&\quad \leqslant \int_{\mathbb{R}\setminus[-\eps, \eps]}
1d\rho(x)+ \max_{-\eps \leqslant x \leqslant \eps} \abs{h(x)}^2 \int_{-1}^1 x^{2}
d\rho(x)+\int_{\mathbb{R}\setminus[-\eps, \eps]}1d\rho(x)<\infty.&
\end{align*}
Therefore, $\rho^{p}$ is a L{\'e}vy measure.

Second, we show that the relation (\ref{changevariablerand}) holds. If $\int_{[-1,1]}|x|d\rho^{p}(x)$ is finite, then so is $\int_{-1}^{1}|p(x)|d\rho(x)$, since
\begin{equation*}
\label{Eq:Integral-bounded}
\int_{-1}^{1}|x|d\rho^{p}(x)=\int_{\mathbb{R}}\mathbf{1}_{[-1,1]\setminus\{0\}}(p(x))\cdot |p(x)|d\rho(x)=\int_{-1}^{1}|p(x)|d\rho(x)<\infty.
\end{equation*}
Thus, the right-hand side and left-hand side of (\ref{changevariablerand}) make sense by Lemma \ref{lemma422}. According to Lemma \ref{lemma422}, we only need to show that \[\int_{(0,t]\times\{x:-n\leqslant x<n\}}xdM^{(p)}(t,x)\overset{d}{=}\int_{(0,t]\times\{x: -n\leqslant p(x)<n\}}p(x)dM(t,x),\]for all $t\geqslant 0$ and $n\in\mathbb{N}.$ For any $N\in\mathbb{N},$ consider mutually disjoint intervals
\[
E_{m}^{N}=\Bigl[-n+\frac{2n(m-1)}{N},-n+\frac{2nm}{N} \Bigr),
\]
where $1\leqslant m\leqslant N$ and $m\in\mathbb{N}.$ Then, the simple functions
\[
s_{N}(x)=\sum_{m=1}^{N}\left(-n+\frac{2n(m-1)}{N} \right)\mathbf{1}_{E_{m}^{N}}(x)
\]
converge to $f(x)=x$, for any $x\in[-n,n)$ as $N\rightarrow\infty.$ Thus,
\[
\int_{(0,t]\times\{x:-n\leqslant x<n\}}s_{N}(x)dM^{(p)}(t,x) \rightarrow \int_{(0,t]\times\{x:-n\leqslant x<n\}}xdM^{(p)}(t,x)
\]
in probability. Let $$J_{m}^{N}=\{x:p(x)\in E_{m}^{N}\},(1\leqslant m\leqslant N, m\in\mathbb{N}).$$ Then, $\cup_{m=1}^{N}J_{m}^{N}=\{x: -n\leqslant |p(x)|<n\}$ and $\{J_{m}^{N}\}$ are mutually disjoint. The simple functions
\[
g_{N}(x)=\sum_{m=1}^{N}\left(-n+\frac{2n(m-1)}{N}\right)\mathbf{1}_{J_{m}^{N}}(x) \rightarrow p(x)
\]
for any $x\in\{x: -n\leqslant |p(x)|<n\},$ as $N\rightarrow\infty.$ Therefore, when $N\rightarrow\infty$,
\[
\int_{(0,t]\times\{x: -n\leqslant |p(x)|<n\}}g_{N}(x)dM(t,x) \rightarrow \int_{(0,t]\times\{x: -n\leqslant |p(x)|<n\}}p(x)dM(t,x)
\]
in probability. We conclude that it suffices to show the equality in distribution
\[
\int_{(0,t]\times\{x: -n\leqslant |p(x)|<n\}}g_{N}(x)dM(t,x) \stackrel{d}{=} \int_{(0,t]\times\{-n\leqslant x<n\}}s_{N}(x)dM^{(p)}(t,x).
\]
Let $F_{m}^{N}=(0,t]\times E_{m}^{N}$ and $K_{m}^{N}=(0,t]\times J_{m}^{N}.$ By Definition \ref{integrationof_simple}, we know that \[\int_{(0,t]\times\{x:-n\leqslant x<n\}}s_{N}(x)dM^{(p)}(t,x)=\sum_{m=1}^{N}\left(-n+\frac{2n(m-1)}{N}\right)M^{(p)}(F_{m}^{N}),\]and\[\int_{(0,t]\times\{x: -n\leqslant p(x)<n\}}g_{N}(x)dM(t,x)=\sum_{m=1}^{N}\left(-n+\frac{2n(m-1)}{N}\right)M(K_{m}^{N}).\]
By Definition \ref{def_poissonrandom}, the distribution of $M(K_{m}^{N})$ is Poiss$^{\boxplus}(t\rho(J_{m}^{N}))$ and the distribution of $M^{(p)}(F_{m}^{N})$ is $\mathrm{Poiss}^{\boxplus}(t\rho^{p}(E_{m}^{N})).$ According to (\ref{deflevymeasurechange}), we conclude that, when $m\neq \frac{N}{2}+1$, $0\notin E_{m}^{N}, so$  \begin{align*}
\rho^{p}(E_{m}^{N})&=\int_{\mathbb{R}}\mathbf{1}_{E_{m}^{N}}(x)d\rho^{p}(x)&\\
&=\int_{\mathbb{R}}\mathbf{1}_{E_{m}^{N}\setminus\{0\}}(p(x))d\rho(x)&\\
&=\int_{\mathbb{R}}\mathbf{1}_{\{x:p(x)\in E_{m}^{N}\}}(x)d\rho(x)=\rho(J_{m}^{N}).&
\end{align*}
So, $\mathrm{Poiss}^{\boxplus}(t\rho(J_{m}^{N}))=\mathrm{Poiss}^{\boxplus}(t\rho^{p}(E_{m}^{N}))$, $m\neq \frac{N}{2}+1$. Notice that the coefficients in front of $M(K^{N}_{1+\frac{N}{2}})$ and $M^{(p)}(F^{N}_{1+\frac{N}{2}})$  are zero. Then, we get the final result (\ref{changevariablerand}). In general, for any $t,\epsilon>0$ and $n\in\mathbb{N},$ we can apply the same method and show that \[
\int_{(0,t]\times\{\epsilon <|x|<n\}}xdM^{(p)}(t,x)\overset{d}{=}\int_{(0,t]\times\{\epsilon <|p(x)|<n\}}p(x)dM(t,x),\] to prove equation (\ref{changevariablerand1}).
\end{proof}

\begin{lemma}
\label{Lemma:rho-p-integral}
Let $p(x)$ be any real-valued continuous function such that $p(0)=p'(0) = 0$ and $p''(0) = 2c$ exists. Then whether or not $\int_{-1}^{1}|x|d\rho(x)$ is finite, $\int_{-1}^{1}|x|d\rho^p(x)$ is finite.
\end{lemma}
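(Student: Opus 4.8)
The plan is to use the change-of-variables formula (\ref{deflevymeasurechange}) that defines $\rho^{p}$ to rewrite $\int_{-1}^{1}|x|\,d\rho^{p}(x)$ as an integral against the original L{\'e}vy measure $\rho$, and then to estimate it by exploiting the quadratic vanishing of $p$ at the origin. Applying (\ref{deflevymeasurechange}) with the bounded Borel function $f(x)=|x|\mathbf{1}_{[-1,1]}(x)$ gives
\[
\int_{-1}^{1}|x|\,d\rho^{p}(x)=\int_{\{x:\,0<|p(x)|\leqslant 1\}}|p(x)|\,d\rho(x),
\]
so the task reduces to showing that the right-hand side is finite.

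Next I would record the local behavior of $p$ near the origin. Since $p(0)=p'(0)=0$ and $p''(0)=2c$ exists, Taylor's theorem (in the Peano remainder form) yields $p(x)=cx^{2}+o(x^{2})$ as $x\to 0$. Consequently there are constants $C>0$ and $\delta\in(0,1]$ with $|p(x)|\leqslant Cx^{2}$ whenever $|x|\leqslant\delta$. This quadratic bound is exactly where the hypothesis $p'(0)=0$ enters: without it one only gets $|p(x)|=O(|x|)$, which is precisely why $\int_{-1}^{1}|x|\,d\rho(x)$ may be infinite to begin with.

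I would then split the domain of integration at $|x|=\delta$. On $\{|x|\leqslant\delta\}$, using $|p(x)|\leqslant Cx^{2}$,
\[
\int_{\{|x|\leqslant\delta,\,0<|p(x)|\leqslant 1\}}|p(x)|\,d\rho(x)\leqslant C\int_{\{|x|\leqslant\delta\}}x^{2}\,d\rho(x)\leqslant C\int_{\{|x|\leqslant 1\}}x^{2}\,d\rho(x)<\infty,
\]
since $\rho$ is a L{\'e}vy measure. On $\{|x|>\delta\}$ I would simply use the constraint $|p(x)|\leqslant 1$ together with the finiteness of $\rho$ away from the origin:
\[
\int_{\{|x|>\delta,\,0<|p(x)|\leqslant 1\}}|p(x)|\,d\rho(x)\leqslant\rho(\{x:|x|>\delta\})<\infty,
\]
again because $\rho$ is a L{\'e}vy measure. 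Adding the two estimates gives the claim.

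The argument is short, and the only point requiring any care is the Taylor estimate near the origin. The main obstacle there is to confirm that mere existence of $p''(0)$ (rather than continuity of $p''$ or genuine twice continuous differentiability) already suffices for the bound $|p(x)|\leqslant Cx^{2}$; this holds by the Peano form of Taylor's theorem, which requires only that $p'$ exist in a neighborhood of $0$ and be differentiable at $0$.
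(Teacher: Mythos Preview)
Your proof is correct and follows essentially the same approach as the paper. The paper packages your Taylor estimate by defining $q(x)=p(x)/x^{2}$ for $x\neq 0$, $q(0)=c$, noting $q$ is continuous (which is exactly your Peano remainder observation), and then splits the integral at $|x|=1$ rather than at $|x|=\delta$; otherwise the two arguments are the same.
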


\begin{proof}
Denote
\[q(x):=\begin{cases}
\frac{p(x)}{x^{2}},x\neq 0\\
c,x=0.
\end{cases}\]Then $q(x)$ is a continuous function. So we can check that\begin{align*}
& \int_{-1}^{1}|x|d\rho^{p}(x) \\
&\quad =\int_{\mathbb{R}}\mathbf{1}_{[-1,1]}(x)|x|d\rho^{p}(x) \\
		&\quad =\int_{\mathbb{R}}\mathbf{1}_{[-1,1]\setminus\{0\}}(p(x))|p(x)|d\rho(x)&\\
		&\quad =\int_{-1}^{1}\mathbf{1}_{[-1,1]\setminus\{0\}}(p(x))|p(x)|d\rho(x)+\int_{\mathbb{R}\setminus[-1,1]}\mathbf{1}_{[-1,1]\setminus\{0\}}(p(x))|p(x)|d\rho(x)&\\
		&\quad \leqslant\|q\|_{C([-1,1])}\int_{-1}^{1}x^{2}d\rho(x)+\int_{\mathbb{R}\setminus[-1,1]}\mathbf{1}_{\{0<|p(x)|\leqslant 1\}}(x)|p(x)|d\rho(x)\\
		&\quad \leqslant C\int_{\mathbb{R}}\min\{1,x^{2}\}d\rho(x)<\infty.& \qedhere
		\end{align*}
\end{proof}

Theorem~\ref{Thm:Variations-distribution} follows from the following more general result by taking $p(x) = x^k$.

\begin{theorem}
\label{Thm:Conergence-p}
For each $N \in \mf{N}$, let $\set{X_{i, N} : i \in \mf{N}}$ be free, identically distributed, self-adjoint random variables affiliated to $(\mc{A}, \tau)$. Suppose that for $t \geqslant 0$,
\[\sum_{r=1}^{[Nt]}X_{N,r}\overset{d.}{\rightarrow}X(t),\] where $X(t)$ is a free L{\'e}vy process.  Let $p(x)$ be any real-valued continuous function such that $p(0)=0$, $p'(0)=b$, and $p''(0)=2c$.
Then, there exists a L{\'e}vy process $X^{p}(t)$ such that
\begin{equation}\label{thm7.4.1}
\sum_{r=1}^{[Nt]}p\left(X_{N,r}\right)\overset{d.}{\rightarrow}X^{p}(t).
\end{equation}
In addition, if $X(t)$ has the L{\'e}vy-It{\^o} decomposition (\ref{levyde}) with the generating triple $(\eta, a, \rho)$, then $X^{p}(t)$ has a representation in the form:
\begin{equation}\label{thm7.4.3}
X^{p}(t)\overset{d}{=} b X(t) + a c t \mathbf{1}_{\mathcal{A}^{0}} +  \int_{(0,t]\times\mathbb{R}}(p(x) - b x) dM(t,x),
\end{equation}
where $M$ is a free Poisson random measure coming from the L{\'e}vy-It{\^o} decomposition of $X(t)$. This is the case whether or not $\int_{-1}^{1}|x|d\rho(x)$ is finite. In particular, if $p'(0) = 0$,
\begin{equation*}\label{thm7.4.3}
X^{p}(t)\overset{d}{=} a c t \mathbf{1}_{\mathcal{A}^{0}} + \int_{(0,t]\times\mathbb{R}}p(x) dM(t,x).
\end{equation*}
\end{theorem}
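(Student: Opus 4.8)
The plan is to run the Bercovici--Pata machinery on the distributions of the transformed increments, and then to identify the limit explicitly via the change-of-variables lemmas. Write $\mu_N$ for the common distribution of the $X_{N,r}$ and $\mu_N^{(p)}$ for the common distribution of $p(X_{N,r})$, the two being related by Lemma~\ref{changevaria}. Since the $X_{N,r}$ are free and identically distributed, so are the $p(X_{N,r})$ (each is a function of a single $X_{N,r}$), hence $\sum_{r=1}^{[Nt]}p(X_{N,r})$ has distribution $(\mu_N^{(p)})^{\boxplus[Nt]}$. By Proposition~\ref{prop511}, to produce $X^p(t)$ as a free L\'evy process it suffices to verify the two conditions \eqref{condi1thm511}--\eqref{condi2thm511} for the array $\mu_N^{(p)}$; the hypothesis together with Theorem~\ref{main2} already supplies them for $\mu_N$, with limiting pair $(\gamma,\sigma)$ the generating pair of $\mu=\mu_1$.

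First I would verify the variance condition. For a bounded continuous $g$, Lemma~\ref{changevaria} gives
\[
\int g(y)\, N\frac{y^2}{1+y^2}\,d\mu_N^{(p)}(y) = \int \left[ g(p(x))\frac{p(x)^2}{1+p(x)^2}\cdot\frac{1+x^2}{x^2}\right] N\frac{x^2}{1+x^2}\,d\mu_N(x).
\]
The bracketed function is bounded and, because $p(0)=0$ and $p'(0)=b$, extends continuously across $x=0$ with value $g(0)b^2$; hence the right side converges to $\int g(p(x))\frac{p(x)^2}{1+p(x)^2}\cdot\frac{1+x^2}{x^2}\,d\sigma(x)$. Inserting $\sigma=a\delta_0+\frac{x^2}{1+x^2}\rho$ from \eqref{rel23} and using the definition \eqref{deflevymeasurechange} of $\rho^p$, this limit is exactly $\int g\,d\sigma^p$ with $\sigma^p = ab^2\delta_0 + \frac{y^2}{1+y^2}\rho^p$; in particular the Gaussian parameter of $X^p$ is $a^p=ab^2$ and its L\'evy measure is $\rho^p$. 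An analogous computation for the drift, using that $\frac{p(x)}{1+p(x)^2}-\frac{bx}{1+x^2}=O(x^2)$ near $0$ (here $p''(0)=2c$ forces the relevant integrand to extend continuously with value $c$ at the origin), yields
\[
\gamma^p = b\gamma + ac + \int_{\mf{R}}\left[\frac{p(x)}{1+p(x)^2}-\frac{bx}{1+x^2}\right]d\rho(x).
\]
Proposition~\ref{prop511} then gives the existence of $X^p(t)$ with Voiculescu transform $t\phi_{\mu^p}$, proving \eqref{thm7.4.1}.

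For the representation \eqref{thm7.4.3}, set $Y(t)=bX(t)+act\mathbf{1}_{\mathcal{A}^{0}}+\int_{(0,t]\times\mf{R}}(p(x)-bx)\,dM$. The last integral is well defined without compensation: $p(x)-bx=O(x^2)$ near $0$, so $\int_{-1}^{1}|p(x)-bx|\,d\rho<\infty$ and Lemma~\ref{lemma422}(2) applies (this is essentially Lemma~\ref{Lemma:rho-p-integral}). Substituting the L\'evy--It\^o decomposition \eqref{levyde} of $X(t)$ and collecting terms, the free Brownian part of $Y(t)$ is $b\sqrt{a}\,S(t)$, with Gaussian parameter $ab^2=a^p$; the deterministic terms combine to the drift computed above; and the jump part becomes $\lim_{\epsilon\searrow0}\left[\int_{\{|x|>\epsilon\}}p(x)\,dM - bt\int_{\{\epsilon<|x|\leqslant1\}}x\,d\rho\,\mathbf{1}_{\mathcal{A}^{0}}\right]$, where I have merged the compensated $b\int x\,dM$ coming from $bX(t)$ with the uncompensated $\int(p(x)-bx)\,dM$. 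By the change-of-variables identity \eqref{changevariablerand1} of Lemma~\ref{changvari-poisson}, this jump part has the distribution of the $\rho^p$-governed free Poisson integral occurring in the canonical L\'evy--It\^o decomposition of the triple $(\eta^p,ab^2,\rho^p)$. Thus $Y(t)$ is, in distribution, that canonical representation, which is $X^p(t)$; when $b=0$ the compensator vanishes (Lemma~\ref{Lemma:rho-p-integral} gives $\int_{-1}^{1}|y|\,d\rho^p<\infty$) and one recovers the stated special case.

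I expect the main obstacle to be precisely this jump part. Because $bX(t)$ and $\int(p(x)-bx)\,dM$ are built from the \emph{same} free Poisson random measure $M$, they are not freely independent, so I cannot simply add Voiculescu transforms and must instead track the compensated limits directly. The delicate point is reconciling the truncation region $\{|x|>\epsilon\}$ inherited from $X(t)$ with the region $\{|p(x)|>\epsilon\}$ produced by \eqref{changevariablerand1}, and showing that the discrepancy between the two compensators is absorbed into the drift as $\epsilon\searrow0$. This is exactly where the hypotheses $p(0)=0$, $p'(0)=b$, $p''(0)=2c$ (ensuring $p(x)-bx=O(x^2)$, hence $\rho$-integrability near the origin) do the work, and it is the only place where the dichotomy ``$\int_{-1}^{1}|x|\,d\rho$ finite or not'' must be handled with care.
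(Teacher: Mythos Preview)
Your proposal is correct and follows essentially the same route as the paper: verify the Bercovici--Pata conditions \eqref{main2.1}--\eqref{main2.2} for $\mu_N^{(p)}$ by pushing forward through $p$ and exploiting $p(x)=bx+O(x^2)$, then identify the limiting triplet $(a^p,\eta^p,\rho^p)=(ab^2,\ldots,\rho^p)$ and match it to the claimed L\'evy--It\^o form via Lemma~\ref{changvari-poisson}. The paper runs the representation step in the opposite direction---starting from the canonical decomposition of $X^p(t)$ with respect to $M^{(p)}$ and rewriting it in terms of $M$---but the computation is the same, and the obstacle you single out (reconciling the truncations $\{|x|>\epsilon\}$ and $\{|p(x)|>\epsilon\}$ and absorbing the compensator discrepancy into the drift) is exactly what the paper handles at the end using Lemma~\ref{Lemma:L1-convergence}.
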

\begin{proof}
Let $X(1)$ be generated by the pair $(\gamma,\sigma)$.
Let $\mu_{N}$ and $\mu_{N}^{p}$ be the distributions of $X_{N,r}$ and $p(X_{N,r})$ respectively. Recall that by Lemma \ref{changevaria},
\[
\int_{\mathbb{R}}f(x)d\mu_{N}^{p}(x)=\int_{\mathbb{R}}f(p(x))d\mu_{N}(x),
\]
for any real-valued and bounded Borel function $f(x)$. Let
\[q(x):=\begin{cases}
\frac{p(x)-bx}{x^{2}},x\neq 0\\
c,x=0.
\end{cases}\]Then $q(x)$ is a continuous function. Therefore, \begin{align*}
&\lim_{N\rightarrow\infty}[Nt]\int_{\mathbb{R}}\frac{x}{1+x^{2}}d\mu_{N}^{p}(x)=t\lim_{N\rightarrow\infty}N\int_{\mathbb{R}}\frac{p(x)}{1+p(x)^{2}}d\mu_{N}(x)&\\
&=t\lim_{N\rightarrow\infty}N\Bigl[\int_{\mathbb{R}}\frac{bx}{1+x^{2}}d\mu_{N}(x)+\int_{\mathbb{R}}\left(\frac{p(x)}{1+p(x)^{2}}-\frac{bx}{1+x^{2}}\right)d\mu_{N}(x)\Bigr]&\\
&=tb\gamma+t\lim_{N\rightarrow\infty}N\int_{\mathbb{R}}g_{p}(x)\frac{x^{2}}{1+x^{2}}d\mu_{N}(x),&\\
\end{align*}
where $g_{p}(x)=\frac{p (x)+ q(x) - b (b + xq(x)) p(x)}{1 + p(x)^2}\in Cb(\mathbb{R})$ and $g_{p}(0)=c$. So $\gamma^{p}$ is defined by \[\gamma^{p}:=\lim_{N\rightarrow\infty}N\int_{\mathbb{R}}\frac{x}{1+x^{2}}d\mu_{N}^{p}(x)=b\gamma+\int_{\mathbb{R}}g_{p}(x)d\sigma(x),\]
where $\gamma$ and $\sigma$ are defined by \eqref{main2.1} and \eqref{main2.2}. Define \[h(x):=\begin{cases}
\frac{p(x)}{x},x\neq 0\\
b,x=0.
\end{cases}\] Then, $h(x)$ is a continuous function and $xh(x)=p(x)$. For any $f(x)\in Cb(\mathbb{R}),$
\begin{align*}
& [Nt]\int_{\mathbb{R}}f(x)\frac{x^{2}}{x^{2}+1}d\mu_{N}^{p}(x) \\
&\quad =[Nt]\int_{\mathbb{R}}f(p(x))\frac{p(x)^{2}}{p(x)^{2}+1}d\mu_{N}(x)&\\
&\quad =[Nt]\int_{\mathbb{R}}f(p(x))\frac{p(x)^{2}}{p(x)^{2}+1}\frac{x^{2}}{x^{2}+1}\frac{x^{2}+1}{x^{2}}d\mu_{N}(x)&\\
&\quad \overset{N\rightarrow\infty}{\longrightarrow}t\int_{\mathbb{R}}f(p(x))\frac{p(x)^{2}+h(x)^{2}}{p(x)^{2}+1}d\sigma(x).
\end{align*}
Let $h_{p}(x):=\frac{p(x)^{2}+h(x)^{2}}{p(x)^{2}+1}$, which is a positive bounded Borel function on $\mathbb{R}$. We denote by $d\widetilde{\sigma}(x)$ the measure $h_{p}(x)d\sigma(x)$. The measure $d\sigma^{p}(x)$ is defined by $\int_{\mathbb{R}}f(x)d\sigma^{p}(x)=\int_{\mathbb{R}}f(p(x))d\widetilde{\sigma}(x)$, for any bounded Borel function $f(x)$. Then, \[N\frac{x^{2}}{x^{2}+1}d\mu_{N}^{p}(x)\overset{w.}{\rightarrow}d\sigma^{p}(x),\] as $N\rightarrow\infty$. Since $\sigma$ is a finite positive Borel measure on $\mathbb{R}$, we know that $\sigma^{p}$ is also a finite positive Borel measure. Thus, the conclusion (\ref{thm7.4.1}) follows immediately from Theorem \ref{main2}. By Proposition \ref{prop511}, we know that $\{X^{p}(t)\}_{t\geqslant 0}$ can be a free L{\'e}vy process affiliated with some $W^{\ast}$-probability space. Denote the free generating triplet of $X^{p}(1)$ by $(a^{p},\eta^{p},\rho^{p})$.

Next, based on Theorem \ref{Thm:Levy-Ito}, L{\'e}vy process $X^{p}(t)$ has a decomposition in the form of (\ref{levyde}) with free generating triplet $(a^{p},\eta^{p},\rho^{p})$. Hence, to prove the representation (\ref{thm7.4.3}) of $X^{p}(t)$, it is necessary to compute the free generating triplet $(a^{p},\eta^{p},\rho^{p})$ in terms of free generating pair $(\gamma,\sigma)$ or free generating triplet $(a,\eta,\rho)$ of $X(t)$. Firstly, $a^{p}=\sigma^{p}(\{0\})=\int_{\mathbb{R}}\mathbf{1}_{\{0\}}(x)d\sigma^{p}(x)=\sigma(\{0\})h(0)^{2}=ab^{2}.$ Secondly, for L{\'e}vy measure $\rho^{p}$ and any bounded Borel function $f(x)$, we have that
\begin{align*}
\int_{\mathbb{R}}f(x)d\rho^{p}(x)&=\int_{\mathbb{R}}f(x)\frac{1+x^{2}}{x^{2}}\mathbf{1}_{\mathbb{R}\setminus\{0\}}(x)d\sigma^{p}(x)&\\
&=\int_{\mathbb{R}}f(p(x))\frac{1+(p(x))^{2}}{p(x)^{2}}\mathbf{1}_{\mathbb{R}\setminus\{0\}}(p(x))\frac{p(x)^{2}+h(x)^{2}}{1+p(x)^{2}}d\sigma(x)&\\
&=\int_{\mathbb{R}}f(p(x))\mathbf{1}_{\mathbb{R}\setminus\{0\}}(p(x))\frac{1+x^{2}}{x^{2}}d\sigma(x)&\\
&=\int_{\mathbb{R}}f(p(x))\mathbf{1}_{\mathbb{R}\setminus\{0\}}(p(x))d\rho(x).
\end{align*}
Therefore, $\rho^{p}$ is precisely the measure from Lemma \ref{changvari-poisson}, and in particular a L{\'e}vy measure. Thirdly, by the relation $\eta^{p}=\gamma^{p}+\int_{\mathbb{R}\setminus\{0\}}\frac{1+x^{2}}{x}(\mathbf{1}_{[-1,1]}(x)-\frac{1}{1+x^{2}})d\sigma^{p}(x),$ and the corresponding relation between $\eta$ and $\gamma$, using also $a=\sigma(\{0\})$ and relation (\ref{rel23}), we can deduce that
\begin{align*}
\eta^{p}&=b\gamma+\int_{\mathbb{R}}\mathbf{1}_{\{p(x)\neq0\}}(x)g_{p}(x)d\sigma(x)+\int_{\mathbb{R}}\mathbf{1}_{\{p(x)=0\}}(x)g_{p}(x)d\sigma(x)&\\
&\quad +\int_{\mathbb{R}}\mathbf{1}_{\{p(x)\neq0\}}(x)\frac{h^{2}+p^{2}}{p}\left(\mathbf{1}_{\{-1\leqslant p(x)\leqslant 1\}}(x)-\frac{1}{1+(p(x))^{2}}\right)d\sigma(x)&\\
&=b\gamma - \int_{\mathbb{R}\setminus\{0\}} \mathbf{1}_{\{p(x)=0\}}(x)\frac{b}{x}d\sigma(x) + \int_{\mathbb{R}} \mathbf{1}_{\{x= 0\}}(x)c d\sigma(x) \\
& \quad +\lim_{\epsilon\searrow 0}\left[\int_{\{\epsilon<|p(x)|\}}\left(\frac{h^{2}+p^{2}}{p}\mathbf{1}_{\{-1\leqslant p\leqslant 1\}}(x)  \right)d\sigma-\int_{\{|p(x)|>\epsilon\}}\frac{b}{x}d\sigma(x)\right]&\\
& =b\gamma+ac+ \lim_{\epsilon\searrow 0}\left[\int_{\mathbb{R}}\mathbf{1}_{\{\epsilon<|p(x)|\leqslant 1\}}(x)\frac{h^{2}+p^{2}}{p}d\sigma(x)-\int_{\{|x|>\epsilon\}}\frac{b}{x}d\sigma(x)\right]&\\
&=b\eta+ac+ \left( \int_{\mf{R}} \mathbf{1}_{\set{0 < \abs{p(x)} \leqslant 1}}(x) p(x) - \mathbf{1}_{\set{0 < x \leqslant 1}}(x) b x \right) \,d\rho(x).
\end{align*}
Note that for some $\eps > 0$, $\abs{p(x)} \leqslant 1$ for $\abs{x} \leqslant \eps$. So
\[
\begin{split}
& \int_{\mf{R}} \abs{ \mathbf{1}_{\set{0 < \abs{p(x)} \leqslant 1}}(x) p(x) - \mathbf{1}_{\set{0 < x \leqslant 1}}(x) b x} \,d\rho(x) \\
&\quad = \int_{-\eps}^\eps \abs{q(x)} x^2 \,d\rho(x) \\
&\quad\qquad + \int_{\mf{R}} \abs{ \mathbf{1}_{\set{0 < \abs{p(x) \leqslant 1}, \abs{x} > \eps}}(x) p(x) - \mathbf{1}_{\set{\eps < x \leqslant 1}}(x) b x} \,d\rho(x) \\
&\quad \leqslant \sup_{-\eps \leqslant x \leqslant \eps} \abs{q(x)} \int_{-\eps}^\eps x^2 \,d\rho(x) + 2 \int_{\set{\abs{x} > \eps}} \,d\rho(x) < \infty
\end{split}
\]
since $\rho$ is a L{\'e}vy measure, and so the expression above makes sense.

Combine three results we got above and recall the general free L{\'e}vy-It{\^o} decomposition of $X^{p}(t)$ with the free generating triplet $(a^{p},\eta^{p},\rho^{p}).$ Let $M^{(p)}$ be the free Poisson random measure on $(D,\mathcal{B}(D),Leb\otimes\rho^{p}).$ Then, we can simplify the last part of L{\'e}vy-It{\^o} decomposition of $X^{p}(t)$ with respect to the free Poisson random measure $M^{(p)}$:
\begin{align*}
&\lim_{\epsilon\searrow 0}\left[\int_{(0,t]\times\{|x|>\epsilon\}}xdM^{(p)}(t,x)-\int_{(0,t]\times\{\epsilon<|x|\leqslant 1\}}xLeb\otimes\rho^{p}(dt,dx)\mathbf{1}_{\mathcal{A}^{0}}\right]&\\
=&\lim_{\epsilon\searrow 0}\left[\int_{(0,t]\times\{|p(x)|>\epsilon\}}p(x)dM(t,x)-t\int_{\mathbb{R}}x\mathbf{1}_{\{\epsilon<|x|\leqslant 1\}}(x)d\rho^{p}(x)\mathbf{1}_{\mathcal{A}^{0}}\right]&\\
=&\lim_{\epsilon\searrow 0}\left[\int_{(0,t]\times\{|p(x)|>\epsilon\}}p(x)dM(t,x)-t\int_{\mathbb{R}}p(x)\mathbf{1}_{\{\epsilon<|x|\leqslant 1\}}(p(x))d\rho(x)\mathbf{1}_{\mathcal{A}^{0}}\right]&.
\end{align*}
Here, we employ Lemma \ref{changvari-poisson}, integration by substitution with respect to free Poisson random measures and relation (\ref{rel32}). Thus finally,
\[
\begin{split}
& X^{p}(t) \\
&\quad\overset{d}{=}
t \left( b\eta+ac+ \lim_{\epsilon\searrow 0} \left( \int_{\mf{R}} \mathbf{1}_{\set{\eps < \abs{p(x)} \leqslant 1}}(x) p(x) - \mathbf{1}_{\set{\eps < x \leqslant 1}}(x) b x \right) \,d\rho(x) \right) \mathbf{1}_{\mathcal{A}^{0}} \\
&\quad\quad + \sqrt{a} b S(t) \\
&\quad\quad + \lim_{\epsilon\searrow 0}\left[\int_{(0,t]\times\{|p(x)|>\epsilon\}}p(x)dM(t,x)-t\int_{\mathbb{R}}\mathbf{1}_{\{\epsilon<|p(x)|\leqslant 1\}}(x) p(x)d\rho(x)\mathbf{1}_{\mathcal{A}^{0}}\right] \\
&\quad = b \Bigl[\eta t \mathbf{1}_{\mathcal{A}^{0}}+ \sqrt{a} S(t)
 \\
&\quad\quad +\lim_{\epsilon\searrow 0} \Bigl(\int_{(0,t]\times \set{|x|>\epsilon}}xdM(t,x)
- t \int_{\set{\epsilon<|x|\leqslant 1}}x \,d\rho(x)\mathbf{1}_{\mathcal{A}^{0}} \Bigr) \Bigr] \\
&\quad\quad + a c t + \lim_{\epsilon\searrow 0} \Bigl( \int_{(0,t]\times\{|p(x)|>\epsilon\}}p(x)dM(t,x) - b \int_{(0,t]\times \set{|x|>\epsilon}}xdM(t,x)  \Bigr) \\
&\quad = b X(t) + a c t + \lim_{\epsilon\searrow 0} \int_{(0,t]\times \set{|x|>\epsilon}} (p(x) - b x) dM(t,x) \\
&\quad\quad + \lim_{\epsilon\searrow 0} \int_{(0,t]\times \mf{R}} p(x) (\mathbf{1}_{\{|p(x)|>\epsilon\}} - \mathbf{1}_{\set{|x|>\epsilon}}) dM(t,x).
\end{split}
\]
Here we used the fact that the distribution of $S(t)$ is symmetric. Since
\[
\int_{(0,t]\times \mf{R}} (p(x) - b x) dM(t,x) = \int_{(0,t]\times \mf{R}} x dM^{(p(x) - b x)}(t,x)
\]
exists by Lemmas~\ref{Lemma:rho-p-integral} and \ref{lemma422}, and the functions $(p(x) - b x) \mathbf{1}_{\abs{x} \leqslant \eps}$ have a uniform integrable bound and converge to zero pointwise as $\eps \rightarrow 0$, by Lemma~\ref{Lemma:L1-convergence} we have
\[
\begin{split}
& \lim_{\epsilon\searrow 0} \int_{(0,t]\times \set{|x|>\epsilon}} (p(x) - b x) dM(t,x) \\
&\quad = \int_{(0,t]\times \mf{R}} (p(x) - b x) dM(t,x) - \lim_{\epsilon\searrow 0} \int_{(0,t]\times \set{|x|\leq\epsilon}} (p(x) - b x) dM(t,x) \\
&\quad = \int_{(0,t]\times \mf{R}} (p(x) - b x) dM(t,x).
\end{split}
\]
Finally, the functions
\[
p(x) (\mathbf{1}_{\{|p(x)|>\epsilon\}} - \mathbf{1}_{\set{|x|>\epsilon}}) = - p(x) (\mathbf{1}_{\{|p(x)|\leq\epsilon\}} - \mathbf{1}_{\set{|x|\leq\epsilon}})
\]
also have a uniform integrable bound and converge to zero pointwise as $\eps \rightarrow 0$. Therefore by Lemma~\ref{Lemma:L1-convergence},
\[
\lim_{\epsilon\searrow 0} \int_{(0,t]\times \mf{R}} p(x) (\mathbf{1}_{\{|p(x)|>\epsilon\}} - \mathbf{1}_{\set{|x|>\epsilon}}) dM(t,x) = 0. \qedhere
\]
\end{proof}

\begin{Remark}
It is natural to consider, more generally, free additive (not necessarily) stationary processes approximated by free, non-identically distributed triangular arrays which are infinitesimal, that is, their distributions $\mu_{i, N}$ satisfy
\[
\lim_{N \rightarrow \infty} \max_{1 \leqslant i \leqslant k_N} \mu_{i, N} (\set{\abs{x} \geqslant \eps}) = 0
\]
for every $\eps > 0$. The following very simple example shows how without additional assumptions, the results immediately break down. Let
\[
X_{i, N} = \frac{1}{N} + (-1)^i \frac{1}{N^\alpha}, i = 1, \ldots, 2N.
\]
Then clearly the array $\set{X_{i, N}}$ is infinitesimal, and $\lim_{N \rightarrow \infty} \sum_{i=1}^{[2N t]} X_{i, N} = 2 t$. But
\[
\sum_{i=1}^{[2N t]} X_{i, N}^2 \sim \frac{2t }{N^{2 \alpha - 1}}
\]
diverges for $\alpha < \frac{1}{2}$. So while the quadratic variation of a non-random process is zero, these sums do not converge to it. Compare with the remarks on page 494 of \cite{Avram-Taqqu-symmetric-poly}.
\end{Remark}

\section{Convergence in moments}
\label{Sec:Moments}

For a non-crossing partition $\pi \in \NC(n)$, denote
\[
\tau_\pi\left[ a_1, \ldots, a_n \right] = \prod_{V \in \pi} \tau\left[\prod_{i \in V} a_i \right].
\]
Recall that the free cumulant functional is defined by
\[
R[a_1, \ldots, a_n] = \sum_{\pi \in \NC(n)} \mathrm{M\ddot{o}b}(\pi) \tau_\pi[a_1, \ldots, a_n],
\]
where $\mathrm{M\ddot{o}b}$ is the M{\"o}bius function on the lattice of non-crossing partitions. The key property of the free cumulant functional is that if $a_1, \ldots, a_k$ are free, then
\[
R[a_{u(1)}, \ldots, a_{u(n)}] = 0
\]
unless $u(1) = \ldots = u(n)$.

\begin{proof}[Proof of Theorem~\ref{Thm:Joint-moments}]
Note first that by freeness and the free moment-cumulant formula,
\[
\begin{split}
& R\left( \sum_{i=1}^{[Nt]} X_{i, N}^{u(1)}, \ldots, \sum_{i=1}^{[Nt]} X_{i, N}^{u(k)} \right) - \tau \left[ \sum_{i=1}^{[Nt]} X_{i, N}^{u(1) + \ldots + u(k)} \right] \\
&\quad = \sum_{i=1}^{[Nt]} \left( R\left(X_{i, N}^{u(1)}, \ldots, X_{i, N}^{u(k)} \right) - \tau \left[X_{i, N}^{u(1) + \ldots + u(k)} \right] \right) \\
&\quad = \sum_{i=1}^{[Nt]} \sum_{\substack{\pi \in \NC(k) \\ \pi \neq \hat{1}_k}} \mathrm{M\ddot{o}b}(\pi) \tau_\pi \left[X_{i, N}^{u(1)}, \ldots, X_{i, N}^{u(k)} \right]. \end{split}
\]
The absolute value of this expression is bounded by
\[
\begin{split}
& \sum_{\substack{\pi \in \NC(k) \\ \pi \neq \hat{1}_k}} \abs{\mathrm{M\ddot{o}b}(\pi)} \abs{ \sum_{i=1}^{[Nt]} \prod_{V \in \pi} \tau \left[ X_{i, N}^{\sum_{j \in V} u(j)} \right] } \\
&\leqslant \sum_{\substack{\pi \in \NC(k) \\ \pi \neq \hat{1}_k}} \abs{\mathrm{M\ddot{o}b}(\pi)} \left( \sum_{i=1}^{[Nt]} \tau \left[ X_{i, N}^{\sum_{j \in V_1} u(j)} \right]^2  \right)^{1/2} \left(\sum_{i=1}^{[Nt]} \tau \left[ X_{i, N}^{\sum_{j \in V_2} u(j)} \right]^2 \right)^{1/2} \\
&\qquad \times \prod_{V \in \pi \setminus \set{V_1, V_2}} \max_{1 \leqslant i \leqslant [N t]} \abs{X_{i,N}^{\sum_{j \in V} u(j)}},
\end{split}
\]
which goes to zero as $N \rightarrow \infty$, by assumption. So to prove joint convergence in moments, it suffices to show that the limit
\[
\lim_{N \rightarrow \infty} \tau \left[ \sum_{i=1}^{[Nt]} X_{i, N}^{k} \right]
\]
exists for each $k$. Indeed, applying the derivation above to the case $u(1) = \ldots = u(k) = 1$,
\[
\tau \left[ \sum_{i=1}^{[Nt]} X_{i, N}^{k} \right] - R_k\left( \sum_{i=1}^{[Nt]} X_{i, N} \right) \rightarrow 0
\]
as $N \rightarrow \infty$. Finally, by assumption
\[
R_k\left( \sum_{i=1}^{[Nt]} X_{i, N} \right)
\rightarrow R_k (X(t)).
\]
The statement about processes follows as in Proposition~\ref{prop511}.
\end{proof}

\section{Convergence in probability}
\label{Sec:Probability}

We first quote a result from \cite{BV93}.

\begin{Lemma}[Lemma~4.4]
\label{Lemma:One-sided-compression}
Let $(\mc{A}, \tau)$ be a $W^\ast$-probability space, $T_1, T_2, T_1', T_2' \in \tilde{\mc{A}}$, and $p_1, p_2 \in \mc{A}$ orthogonal projections. Suppose $T_j' = T_j p_j$, for $j = 1, 2$. Then there exist projections $p, q \in \mc{A}$ such that
\begin{enumerate}
\item
$(T_1 T_2) p = (T_1' T_2') p$,
\item
$(T_1 + T_2) q = (T_1' + T_2')q$, and
\item
$\tau[p], \tau[q] \geqslant \tau[p_1] + \tau[p_2] - 1$.
\end{enumerate}
\end{Lemma}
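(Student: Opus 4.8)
The plan is to treat the two assertions separately, reducing each to producing a single projection that annihilates the relevant ``defect'' operator on the right. For the sum, substitute $T_j' = T_j p_j$ to get
\[
(T_1 + T_2) - (T_1' + T_2') = T_1(1 - p_1) + T_2(1 - p_2).
\]
I would take $q = p_1 \wedge p_2$. Since $q \leqslant p_1$ and $q \leqslant p_2$ we have $(1 - p_1)q = 0 = (1 - p_2)q$, so the defect kills $q$ and (b) holds. The bound (c) for $q$ is the parallelogram law for projections in a finite von Neumann algebra, $\tau[p_1 \wedge p_2] + \tau[p_1 \vee p_2] = \tau[p_1] + \tau[p_2]$, together with $\tau[p_1 \vee p_2] \leqslant 1$, which yields $\tau[q] \geqslant \tau[p_1] + \tau[p_2] - 1$.

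The product is the substantive case. Writing
\[
T_1 T_2 - T_1' T_2' = T_1(1 - p_1) T_2 + T_1 p_1 T_2 (1 - p_2),
\]
I would seek a projection $p$ satisfying (i) $p \leqslant p_2$, so that $p_2 p = p$ and the second summand vanishes on the right, and (ii) $(1 - p_1) T_2 p = 0$, i.e. $T_2 p = p_1 T_2 p$, so that the first summand vanishes and in addition $T_1 p_1 T_2 p = T_1 T_2 p$. With both conditions, $T_1' T_2' p = T_1 p_1 T_2 (p_2 p) = T_1 p_1 T_2 p = T_1 T_2 p$, which is exactly (a).

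To build such a $p$ with the right trace, set $S = (1 - p_1) T_2 \in \tilde{\mc{A}}$ (a product of a bounded projection and an affiliated operator, hence affiliated) and let $e \in \mc{A}$ be its kernel projection, which is the spectral projection $\chf{\set{0}}(\abs{S})$ and so lies in $\mc{A}$. Condition (ii) says precisely $p \leqslant e$, so I take $p = p_2 \wedge e$; then (a) holds and $\tau[p] \geqslant \tau[p_2] + \tau[e] - 1$ by the parallelogram law. The crux is the estimate $\tau[e] \geqslant \tau[p_1]$. Because $\mc{A}$ is finite with trace $\tau$, the left and right support projections of $S$ (namely $vv^\ast$ and $v^\ast v$ in the polar decomposition $S = v\abs{S}$) are Murray--von Neumann equivalent and therefore have equal trace, so $\tau[e] = 1 - \tau[v^\ast v] = 1 - \tau[vv^\ast]$. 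Since the range of $S = (1 - p_1)T_2$ lies in the range of $1 - p_1$, its left support satisfies $vv^\ast \leqslant 1 - p_1$, whence $\tau[vv^\ast] \leqslant 1 - \tau[p_1]$ and thus $\tau[e] \geqslant \tau[p_1]$. Combining gives $\tau[p] \geqslant \tau[p_1] + \tau[p_2] - 1$, which is (c) for $p$.

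The main obstacle is the product case, and within it the bound $\tau[e] \geqslant \tau[p_1]$. This is where finiteness of $\mc{A}$ is essential: it is the equality of traces of the two support projections of an affiliated operator that lets a trace count on the \emph{range} side of $(1 - p_1)T_2$ control the \emph{kernel} side, which is what conditions (i)--(ii) demand. The one point requiring care is that $S$ be genuinely affiliated, so that its support projections lie in $\mc{A}$ and the trace computation (and the intersection $p_2 \wedge e$) is legitimate; this is guaranteed since $\tilde{\mc{A}}$ is an algebra and $1 - p_1 \in \mc{A}$.
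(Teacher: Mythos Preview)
The paper does not prove this lemma; it is quoted verbatim from \cite{BV93} (Lemma~4.4 there) and used as a black box. So there is no proof in the present paper to compare against.

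Your argument is correct and is essentially the proof given in \cite{BV93}. For the sum you take $q = p_1 \wedge p_2$ and invoke the parallelogram identity; for the product you let $e$ be the kernel projection of $(1-p_1)T_2$ and set $p = p_2 \wedge e$, with the trace bound coming from equivalence of left and right supports in a finite von Neumann algebra. That is exactly the mechanism in Bercovici--Voiculescu. One minor stylistic point: your decomposition $T_1T_2 - T_1'T_2' = T_1(1-p_1)T_2 + T_1p_1T_2(1-p_2)$ is fine, but you only use it heuristically; the actual verification $(T_1 p_1 T_2 p_2)p = T_1 T_2 p$ from conditions (i) and (ii) is what matters, and you carry that out cleanly.
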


\begin{Remark}
\label{Remark:Compound-Poisson}
Let $\rho$ be a probability measure on $\mf{R}$. In the tracial non-commutative probability space $\mc{C} = L^\infty((0,1] \times \mf{R}, \mathrm{Leb} \otimes \rho)$, consider the projections $P(B) = \chi_B$ for every Borel set $B$. Let $s$ be a semicircular element free from $\mc{C}$. Then according to \cite{Nica-Speicher-Multiplication}, the family of operators $M : B \rightarrow s P(B) s$ satisfies all the properties of a free Poisson random measure in Definition~\ref{def_poissonrandom}. Next, let
\[
e(t) = \int_{\mf{R}} x \,P((0,t] \times dx),
\]
meaning that the spectral projections of $e_t$ are $\set{P((0,t] \times (-\infty, x))}$. Then
\[
\set{e(t) : t \in (0,1]}
\]
is a process with orthogonal increments, and $\set{s e(t) s : t \in (0,1]}$ is a free compound Poisson process. Note that
\[
s e(t) s = \int_{\mf{R}} x s \,P((0,t] \times dx) s = \int_{(0,t] \times \mf{R}} x \,dM(t,x).
\]
\end{Remark}

\begin{Prop}
\label{Prop:Compression-zero}
Let $Z_1, \ldots, Z_k$ be bounded and centered, free from a stationary process $\set{e(t)}$ with orthogonal increments. Then
\[
\sum_{i=1}^N e_{i, N}^{m_0} Z_1 e_{i, N}^{m_1} Z_2 \ldots e_{i, N}^{m_{k-1}} Z_k e_{i,N}^{m_k} \rightarrow 0
\]
in probability as $N \rightarrow \infty$. Here we denote as usual $e_{i,N} = e(\frac{i}{N}) - e(\frac{i-1}{N})$.
\end{Prop}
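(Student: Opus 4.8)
The plan is to verify the definition of almost uniform convergence directly: given $\delta>0$, I would produce a single projection $p$ with $\tau[1-p]<\delta$ and $\norm{S_N p}\to 0$ as $N\to\infty$, where $S_N$ is the sum in the statement and I abbreviate the summands as $w_{i,N}=e_{i,N}^{m_0}Z_1 e_{i,N}^{m_1}\cdots Z_k e_{i,N}^{m_k}$.

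The structural heart of the argument is that orthogonality of the increments makes $S_N$ block diagonal. Realizing $\set{e(t)}$ as in Remark~\ref{Remark:Compound-Poisson}, let $\pi_{i,N}\in\mc{C}$ be the support projection of $e_{i,N}$ (the indicator of the time slab $(\frac{i-1}{N},\frac iN]$), so that the $\pi_{i,N}$ are pairwise orthogonal and $e_{i,N}^m\pi_{i,N}=e_{i,N}^m$. Since each word begins and ends with a positive power of $e_{i,N}$ (i.e.\ $m_0,m_k\ge 1$, as holds in the situations where the statement is applied), we get $w_{i,N}=\pi_{i,N}w_{i,N}\pi_{i,N}$, and because $\pi_{i,N}\pi_{j,N}=0$ for $i\ne j$ it follows that $\pi_{i,N}S_N\pi_{j,N}=\delta_{ij}w_{i,N}$. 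Hence $S_N^{\ast}S_N=\bigoplus_i w_{i,N}^{\ast}w_{i,N}$ is supported on the orthogonal ranges of the $\pi_{i,N}$, so $\norm{S_N}=\max_i\norm{w_{i,N}}$; by stationarity of the increments all blocks are identically distributed, and therefore $\norm{S_N}=\norm{w_{1,N}}$.

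First I would dispose of the case of bounded jumps. If $\norm{e_{i,N}}\le R$, then $w_{1,N}$ is a word of fixed length in the bounded centered elements $Z_j$ sandwiched between powers of $e_{1,N}$, all living on the corner $\pi_{1,N}\mc{A}\pi_{1,N}$ of normalized trace $1/N$. A routine free-compression estimate—computing $\tau[(w_{1,N}^{\ast}w_{1,N})^p]$ by freeness, where every power of $e_{1,N}$ contributes a trace factor $O(1/N)$ and the centered $Z_j$ force further collapsing—shows $\norm{w_{1,N}}\to 0$, so that $S_N\to 0$ in norm, hence a.u., whenever the jumps are bounded.

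For general (possibly unbounded) jumps I would truncate: write $e_{i,N}=e_{i,N}^{\le R}+e_{i,N}^{>R}$ with $e_{i,N}^{\le R}=e_{i,N}P_R$, where $P_R=\chf{\set{\abs{x}\le R}}\in\mc{C}$ and $q_R=1-P_R$, and split $S_N$ accordingly. The fully truncated part $S_N^{\le R}$ has bounded jumps, so $\norm{S_N^{\le R}}\to 0$ by the previous step, while $\tau[1-P_R]=\rho(\set{\abs{x}>R})<\delta$ once $R$ is large. It then remains to control the large-jump remainder by a fixed projection built from $P_R$, and this is where the main obstacle lies: a factor $e_{i,N}^{>R}$ occurring at an interior slot is flanked by the $Z_j$, which do not commute with $P_R$, so one-sided compression by $P_R$ cannot simply delete it. I would resolve this using Lemma~\ref{Lemma:One-sided-compression} to transport each interior cutoff to the end of the word, at the cost of a projection whose trace defect is governed by $\tau[q_R]=\rho(\set{\abs{x}>R})$, and Lemma~\ref{Lemma:a-u-products} to recombine the compressed factors. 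The delicate point, and the step demanding the most care, is to keep these compressing projections uniform in $N$, so that one $p$ serves the entire sequence and the easy convergence in measure is genuinely upgraded to almost uniform convergence with no moment hypothesis on $\rho$; the saving grace is that every discrepancy projection is controlled by the single large-jump projection $q_R$, whose trace is already $<\delta$.
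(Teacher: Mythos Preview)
Your approach is essentially the paper's: truncate the increments by a fixed spectral projection, invoke Lemma~\ref{Lemma:One-sided-compression} to transport the cutoff past the interior $Z_j$'s at the cost of a projection $p$ of controlled trace deficit, and then dispatch the bounded case by a norm estimate. The only difference is presentational: the paper outsources the bounded-case bound to Theorem~3 of \cite{AnsFSM} (which gives $\norm{S_N'}\le 4^{2k}(\max\norm{Z_j})^k T^{\sum m_j}N^{-k/2}$ directly for the full sum), whereas your block-diagonal reduction $\norm{S_N}=\max_i\norm{w_{i,N}}$ followed by a compression estimate on $\pi_{1,N}Z_j\pi_{1,N}$ is effectively one way to reprove that inequality---and you have correctly flagged the uniformity of $p$ in $N$ as the point requiring care.
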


\begin{proof}
Without loss of generality, assume that $\set{e(t)}$ has the form in Remark~\ref{Remark:Compound-Poisson}. For arbitrary $\eps > 0$, choose $T$ so that $\rho((-T, T)^c) < \eps$. Denote
\[
q_{i, N} = \chi_{\left((0,1] \times \mf{R} \right) \setminus \left((\frac{i-1}{N}, \frac{i}{N}] \times (-T, T)^c \right)},
\]
so that $\tau(q_{i, N}) > 1 - \eps/N$. Denote $e_{i, N}' = e_{i, N} q_{i,N}$. Then $\set{e_{i, N}' : 1 \leq i \leq N}$ are still orthogonal, and
\[
\norm{\sum_{i=1}^N e_{i,N}'} = \norm{\int_{-T}^T x \,P((0,1] \times dx)} \leq T.
\]
According to Lemma~\ref{Lemma:One-sided-compression}, there is a projection $p_{i,N}$ with
\[
\tau(p_{i,N}) > 1 - \sum_{j=0}^k m_j \eps/N
\]
such that
\begin{multline*}
e_{i, N}^{m_0} Z_1 e_{i, N}^{m_1} Z_2 \ldots e_{i, N}^{m_{k-1}} Z_k e_{i,N}^{m_k} p_{i,N} \\
= (e'_{i, N})^{m_0} Z_1 (e'_{i, N})^{m_1} Z_2 \ldots (e'_{i, N})^{m_{k-1}} Z_k (e'_{i,N})^{m_k} p_{i,N}.
\end{multline*}
Therefore for $p_N = \bigwedge_{i=1}^N p_{i,N}$, $\tau(p_N) > 1 - \sum_{j=0}^k m_j \eps$ and
\begin{multline*}
\sum_{i=1}^N e_{i, N}^{m_0} Z_1 e_{i, N}^{m_1} Z_2 \ldots e_{i, N}^{m_{k-1}} Z_k e_{i,N}^{m_k} p_{N} \\
= \sum_{i=1}^N (e'_{i, N})^{m_0} Z_1 (e'_{i, N})^{m_1} Z_2 \ldots (e'_{i, N})^{m_{k-1}} Z_k (e'_{i,N})^{m_k} p_{N}.
\end{multline*}
On the other hand, according to Theorem~3 from \cite{AnsFSM},
\begin{multline*}
\norm{\sum_{i=1}^N (e'_{i, N})^{m_0} Z_1 (e'_{i, N})^{m_1} Z_2 \ldots (e'_{i, N})^{m_{k-1}} Z_k (e'_{i,N})^{m_k}} \\
\leqslant 4^{2k} (\max \norm{Z_j})^k T^{\sum_{j=0}^k m_j} N^{-k/2}.
\end{multline*}
The result follows.
\end{proof}

\begin{proof}[Proof of Theorem~\ref{Thm:Compound-Poisson}]
By using the addition part of Lemma~\ref{Lemma:product-probability}, we may assume that $t \in (0, 1]$. Note first that by Lemma~\ref{Lemma:L1-convergence},
\[
\int_{\bigl(0, \frac{[N t]}{N}\bigr] \times \mf{R}} x^k \,dM(t,x) \rightarrow \int_{(0, t] \times \mf{R}} x^k \,dM(t,x)
\]
in probability as $N \rightarrow \infty$. Next, write $X(t) = s e(t) s$ as before. By the same reasoning as in Remark~\ref{Remark:Compound-Poisson},
\[
\int_{(0, t] \times \mf{R}} x^k \,dM(t,x) = s e(t)^k s.
\]
Therefore
\[
\begin{split}
& \sum_{i=1}^{[N t]} X_{i, N}^k - \int_{\bigl(0, \frac{[N t]}{N}\bigr] \times \mf{R}} x^k \,dM(t,x)
= \sum_{i=1}^{[N t]} (s e_{i,N} s)^k  - \sum_{i=1}^{[N t]} s e_{i,N}^k s \\
&\quad = \sum_{j=1}^{k-1} \sum_{\substack{m_0, m_1, \ldots, m_j \geqslant 1 \\ m_0 + m_1 + \ldots + m_j = k}}  s \left( \sum_{i=1}^{[N t]} e_{i, N}^{m_0} (s^2 - 1) \ldots e_{i, N}^{m_{j-1}} (s^2 - 1) e_{i,N}^{m_j} \right) s.
\end{split}
\]
Now note that $\tau(s^2 - 1) = 0$ and apply Proposition~\ref{Prop:Compression-zero}.
\end{proof}

check

\begin{proof}[Proof of Corollary~\ref{Cor:Joint-probability}]
Combining Theorem~\ref{Thm:Compound-Poisson} with Lemma~\ref{Lemma:product-probability}, polynomials in the variables $\set{\sum_{i=1}^{[N t]} X_{i, N}^j}$ converge to the corresponding polynomials in $\set{X^{(j)}(t)}$ in probability. Finally, by Proposition~2.19 in \cite{BNTSelf} (see also Proposition~2.1 in \cite{Lindsay-Pata-WLLN}), convergence in probability implies convergence in distribution.
\end{proof}

\begin{proof}[Proof of Corollary~\ref{Cor:Convergence-integral}]
According to Corollary~\ref{Cor:Distinct},
\begin{multline*}
\sum_{\substack{1 \leqslant i(1), i(2), \ldots, i(k) \leqslant [N t] \\ i(1) \neq i(2), i(2) \neq i(3), \ldots, i(k-1) \neq i(k)}} X_{i(1), N} X_{i(2), N} \ldots X_{i(k), N} \\
= \sum_{j=1}^k (-1)^{k - j} \sum_{\substack{m_1, \ldots, m_j \geqslant 1 \\ m_1 + \ldots + m_j = k}} \left( \sum_{i(1)=1}^{[N t]} X_{i(1), N}^{m_1} \right) \ldots \left( \sum_{i(j)=1}^{[N t]} X_{i(j), N}^{m_j} \right).
\end{multline*}
Now apply either Theorem~\ref{Thm:Joint-moments} or Corollary~\ref{Cor:Joint-probability}.
\end{proof}

See the second author's thesis for a direct proof.

\begin{Remark}
In the case of a process which is not necessarily centered, normalizing it so that $\tau[X(t)] = t$, a more natural definition of an $n$-fold stochastic integral $\psi_n$, according to Theorem~4 of \cite{AnsFSM}, is
\[
\psi_n = X \psi_{n-1} + \sum_{j=2}^n (-1)^{j-1} \sum_{k=0}^{n-j} \binom{k + j - 2}{j - 2} X^{(j)} \psi_{n - j - k}.
\]
The recursion
\[
P_n = \left( \sum_{i=1}^N x_i \right) P_{n-1} + \sum_{j=2}^n (-1)^{j-1} \sum_{k=0}^{n-j} \binom{k + j - 2}{j - 2} \left( \sum_{i=1}^N x_i^j \right) P_{n - j - k}.
\]
for polynomials $P_n(x_1, \ldots, x_N, t)$ can be solved explicitly, but we find the resulting formula complicated and not particularly illuminating, and omit it from the article.
\end{Remark}

We can similarly upgrade various results proven in \cite{AnsFSM} for bounded free L{\'e}vy processes and uniform limits to general free compound Poisson processes and limits in probability. This applies to Theorem 1 (stochastic measures corresponding to crossing partitions are zero), Proposition 1 (for a centered process, stochastic measures corresponding to partitions with inner singletons are zero) and its corollary on the equality of expressions \eqref{Eq:Stochastic-integral} and \eqref{Eq:All-distinct},

\begin{Remark}
\label{Remark:Boxplus-power}
Let $\mu, \nu$ be probability measures on $\mf{R}$, such that $\mu = \mu_a$, $\nu =\mu_b$ for free $a, b \in (\tilde{\mc{A}}_{sa}, \tau)$. The additive free convolution $\mu \boxplus \nu$ is the distribution of $a + b$. If $\mu$ is supported on $\mf{R}_+$ (so that $a$ is positive), the multiplicative free convolution $\mu \boxtimes \nu$ is the distribution of $a^{1/2} b a^{1/2}$, which we identify (since $\tau$ is a trace) with the distribution of $a b$.

According to Proposition~3.5 in \cite{Belinschi-Nica-B_t}, we have the relation
\begin{equation}
\label{Eq:Boxplus-power}
(\mu^{\boxplus t}) \boxtimes (\nu^{\boxplus t}) = (\mu \boxtimes \nu)^{\boxplus t} \circ D_{1/t},
\end{equation}
where $D_{1/t}$ is the dilation operator corresponding to multiplying the operator by $t$. Note that in the proposition, the relation is stated for $t \geqslant 1$, but the same argument shows that it holds whenever all the convolution powers on the left-hand side are defined and at least one of them is supported on $\mf{R}_+$.
\end{Remark}

\begin{Prop}
\label{Prop:Dot-product}
Let
\[
\set{X_{i, N}^{(1)}: 1 \leqslant i \leqslant N, N \in \mf{N}} \cup \set{X_{i, N}^{(2)}: 1 \leqslant i \leqslant N, N \in \mf{N}} \subset \left((\tilde{\mc{A}}, \tau)_{sa} \right)
\]
be two triangular arrays with free, identically distributed rows, free from each other, the first of which consists of positive operators. Denote
\[
\sum_{i=1}^N X_{i, N}^{(j)} = X_N^{(j)}, \quad j = 1, 2
\]
and suppose that
\[
\lim_{N \rightarrow \infty}  X_N^{(j)} = X^{(j)}, \quad j = 1, 2
\]
in distribution, for some $\set{X^{(1)}, X^{(2)}}$. Then as $N \rightarrow \infty$,
\[
\sum_{i=1}^N \left(X_{i, N}^{(1)} \right)^{1/2} X_{i, N}^{(2)} \left( X_{i, N}^{(1)} \right)^{1/2} \rightarrow 0
\]
in distribution, and so also in probability.
\end{Prop}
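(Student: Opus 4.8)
The plan is to recognize the summands as a free, identically distributed family whose common law is a multiplicative free convolution, and then to transport the assumed additive convergence through the Belinschi--Nica dilation identity \eqref{Eq:Boxplus-power}.

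First I would set $a_i = X_{i, N}^{(1)}$, $b_i = X_{i, N}^{(2)}$, and $c_i = a_i^{1/2} b_i a_i^{1/2}$, so that $S_N := \sum_{i=1}^N c_i$ is the sum in question. Since the first array has free rows, the second array has free rows, and the two arrays are free from one another, associativity of freeness makes the whole family $\set{a_1, \ldots, a_N, b_1, \ldots, b_N}$ free; grouping then shows that the subalgebras $W^\ast(a_i, b_i)$ are free. Because each $a_i$ has law $\mu_N$, each $b_i$ has law $\nu_N$, and $a_i$ is free from $b_i$, the pairs $(a_i, b_i)$ are identically distributed, so $c_1, \ldots, c_N$ are free and identically distributed. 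As $\tau$ is a trace, $a_i^{1/2} b_i a_i^{1/2}$ has the same moments as $a_i b_i$, so by Remark~\ref{Remark:Boxplus-power} the common law of $c_i$ is $\mu_N \boxtimes \nu_N$, and hence the law of $S_N$ is $\left( \mu_N \boxtimes \nu_N \right)^{\boxplus N}$.

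Next I would invoke \eqref{Eq:Boxplus-power} with $t = N$, which applies because $\mu_N$ is supported on $\mf{R}_+$ and both $N$-fold convolution powers are defined; it gives
\[
\mu_N^{\boxplus N} \boxtimes \nu_N^{\boxplus N} = \left( \mu_N \boxtimes \nu_N \right)^{\boxplus N} \circ D_{1/N},
\]
so that the law of $S_N$ is the dilation by $\frac{1}{N}$ of $\theta_N := \mu_N^{\boxplus N} \boxtimes \nu_N^{\boxplus N}$. By hypothesis $\mu_N^{\boxplus N} \to \mu_{X^{(1)}}$ and $\nu_N^{\boxplus N} \to \mu_{X^{(2)}}$ weakly, with $\mu_{X^{(1)}}$ supported on $\mf{R}_+$ since $X^{(1)}$ is a limit of positive operators. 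Weak continuity of $\boxtimes$ then makes $\theta_N$ weakly convergent, hence tight, and dilating a tight family by $\frac{1}{N}$ forces convergence to $\delta_0$: for each $\eps > 0$ one has $\mu_{S_N}(\set{\abs{x} > \eps}) = \theta_N(\set{\abs{x} > N \eps}) \to 0$ by tightness. Thus $S_N \to 0$ in distribution, and since $S_N$ is self-adjoint this is exactly convergence in probability.

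The one delicate step is the weak continuity of $\boxtimes$ with one factor supported on $\mf{R}_+$, which is genuinely subtle when a factor degenerates (for instance $\mu_{X^{(1)}} = \delta_0$). I expect this to be the main obstacle to a fully rigorous argument. However, only the tightness of $\theta_N$ is actually needed, not the identity of its limit, and a factor collapsing toward $\delta_0$ can only push $\theta_N$ toward $\delta_0$, so the conclusion should persist even in the degenerate case.
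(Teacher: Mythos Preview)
Your argument is correct and follows essentially the same route as the paper's proof: identify the law of each summand as $\mu_N \boxtimes \nu_N$, apply the Belinschi--Nica identity \eqref{Eq:Boxplus-power} to write the law of $S_N$ as a $\tfrac{1}{N}$-rescaling of $\mu_N^{\boxplus N} \boxtimes \nu_N^{\boxplus N}$, and conclude via weak continuity of $\boxtimes$. The only cosmetic difference is that you invoke \eqref{Eq:Boxplus-power} with $t=N\geqslant 1$ (its original range of validity), whereas the paper applies it with $t=1/N$ using the extension noted in Remark~\ref{Remark:Boxplus-power}; your caution in replacing full weak convergence of $\theta_N$ by mere tightness is a harmless refinement, and in fact weak continuity of $\boxtimes$ on $\mc{M}_+\times\mc{M}$ is known (Bercovici--Voiculescu), so the degenerate case is not an obstacle.
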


\begin{proof}
Using the identity from the preceding remark,
\[
\mu_{\left(X_{i, N}^{(1)} \right)^{1/2} X_{i, N}^{(2)} \left( X_{i, N}^{(1)} \right)^{1/2}}
= (\mu_{X_N^{(1)}}^{\boxplus (1/N)}) \boxtimes (\mu_{X_N^{(2)}}^{\boxplus (1/N)})
= (\mu_{X_N^{(1)}} \boxtimes \mu_{X_N^{(2)}})^{\boxplus (1/N)} \circ D_N
\]
and so
\[
\mu_{\sum_{i=1}^N \left(X_{i, N}^{(1)} \right)^{1/2} X_{i, N}^{(2)} \left( X_{i, N}^{(1)} \right)^{1/2}}(z) = (\mu_{X_N^{(1)}} \boxtimes \mu_{X_N^{(2)}}) \circ D_N.
\]
As $N \rightarrow \infty$, $\mu_{X_N^{(1)}} \boxtimes \mu_{X_N^{(2)}} \rightarrow \mu_{X^{(1)}} \boxtimes \mu_{X^{(2)}}$ weakly, and so the distribution above converges to $\delta_0$ weakly.
\end{proof}

\begin{Remark}
Denote $\mc{C}_\mu(z) = z \phi_\mu(1/z)$ the free cumulant transform. A measure $\sigma$ is free regular if
\[
\mc{C}_\sigma(z) = \eta' z + \int_{\mf{R}} \left( \frac{1}{1 - z x} - 1 \right) \nu(dx)
\]
for some $\eta' \geqslant 0$ and $\nu((-\infty, 0]) = 0$. By Proposition~6.2 in \cite{Arizmendi-Hasebe-Sakuma-Subordinators}, if $\mu$ is $\boxplus$-infinitely divisible and symmetric, then
\[
\mu^2 = m \boxtimes \sigma.
\]
Here $\mu^2 = \mu^{(x^2)}$ in our earlier notation, $m$ is the standard free Poisson distribution, and $\sigma$ is a free regular measure. Moreover by Theorem~11 from \cite{Perez-Abreu-Sakuma-multiplicative-mixtures}, this is equivalent to
\[
\mc{C}_{\mu}(z) = \mc{C}_{\sigma}(z^2).
\]
Next, let $\mu, \nu$ be probability measures on $\mf{R}$, such that $\mu = \mu_a$, $\nu =\mu_b$ for free $a, b \in (\tilde{\mc{A}}_{sa}, \tau)$. Denote by $\mu \square \nu$ the distribution of the anti-commutator $a b + b a$. If $\mu, \nu$ are both symmetric, it coincides with the distribution of the commutator $i (a b - b a)$, and satisfies
\[
\left((\mu \square \nu)^{\boxplus {1/2}} \right)^2 = \mu^2 \boxtimes \nu^2.
\]
See \cite{Nica-Speicher-Commutators}, Lectures 15 and 19 in \cite{Nica-Speicher-book}, and Corollary~6.5 in \cite{Arizmendi-Hasebe-Sakuma-Subordinators}.

We also note that if in Remark~\ref{Remark:Boxplus-power}, $\mu$ is free regular, then by Theorem~4.2 in \cite{Arizmendi-Hasebe-Sakuma-Subordinators}, $\mu^{\boxplus t}$ is the distribution of a positive operator for all $t > 0$. So if in addition $\nu$ is $\boxplus$-infinitely divisible, the identity \eqref{Eq:Boxplus-power} holds for all such $t$.
\end{Remark}

\begin{Prop}
\label{Prop:Dot-product-2}
Let
\[
\set{X_{i, N}^{(1)}: 1 \leqslant i \leqslant N, N \in \mf{N}} \cup \set{X_{i, N}^{(2)}: 1 \leqslant i \leqslant N, N \in \mf{N}} \subset \left((\tilde{\mc{A}}, \tau)_{sa} \right)
\]
be two triangular arrays with free, identically distributed rows, free from each other, all of whose distributions are symmetric. Denote
\[
\sum_{i=1}^N X_{i, N}^{(j)} = X_N^{(j)}, \quad j = 1, 2
\]
and suppose that the distribution of each $X_N^{(j)}$ is $\boxplus$-infinitely divisible and
\[
\lim_{N \rightarrow \infty} X_N^{(j)} = X^{(j)}, \quad j = 1, 2
\]
in distribution, for some $\set{X^{(1)}, X^{(2)}}$. Then as $N \rightarrow \infty$,
\[
\sum_{i=1}^N \left(X_{i, N}^{(1)} X_{i, N}^{(2)} + X_{i, N}^{(2)} X_{i, N}^{(1)} \right) \rightarrow 0
\]
in distribution, and so also in probability, and
\begin{equation}
\label{Eq:Product}
\sum_{i=1}^N X_{i, N}^{(1)} X_{i, N}^{(2)} \rightarrow 0
\end{equation}
in probability.
\end{Prop}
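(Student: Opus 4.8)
The plan is to handle the self-adjoint anticommutator sum first, and then read off \eqref{Eq:Product} from it. Write $S_N = \sum_{i=1}^N \left(X_{i,N}^{(1)} X_{i,N}^{(2)} + X_{i,N}^{(2)} X_{i,N}^{(1)}\right)$. First I would observe that, because the two arrays are free from one another and each has free rows, the subalgebras generated by the pairs $(X_{i,N}^{(1)}, X_{i,N}^{(2)})$, $1 \leqslant i \leqslant N$, form a free family; hence the anticommutators are free and identically distributed. Since each $X_N^{(j)}$ is a sum of $N$ free i.i.d.\ copies and is $\boxplus$-infinitely divisible, the common distribution of $X_{i,N}^{(j)}$ is $\mu_j := (\mu_{X_N^{(j)}})^{\boxplus 1/N}$, and therefore $\mu_{S_N} = (\mu_1 \square \mu_2)^{\boxplus N}$. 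All measures here are symmetric (the anticommutator of free symmetric variables is symmetric, and $\boxplus$-powers preserve symmetry), and a symmetric probability measure tends to $\delta_0$ if and only if its pushforward under $x \mapsto x^2$ does; so it suffices to show $\left((\mu_1 \square \mu_2)^{\boxplus N}\right)^{(x^2)} \to \delta_0$. Note that $\mu_1, \mu_2$ are symmetric and $\boxplus$-infinitely divisible, so by the results cited in the preceding Remark $\mu_1 \square \mu_2$ is $\boxplus$-infinitely divisible and its half-power is defined.

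To compute that pushforward I would invoke the preceding Remark: $\left((\mu_1 \square \mu_2)^{\boxplus 1/2}\right)^2 = \mu_1^2 \boxtimes \mu_2^2$ (here $\mu^2 = \mu^{(x^2)}$), together with the fact that a symmetric $\boxplus$-infinitely divisible $P$ satisfies $P^2 = m \boxtimes \sigma_P$ for a free regular $\sigma_P$ determined by $\mc{C}_P(z) = \mc{C}_{\sigma_P}(z^2)$. Writing $\sigma_N^{(j)}$ for the free regular measure attached to $\mu_{X_N^{(j)}}$, the linear scaling of the free cumulant transform under $\boxplus$-powers gives $\mu_j^2 = m \boxtimes (\sigma_N^{(j)})^{\boxplus 1/N}$; feeding this into the anticommutator identity shows that $(\mu_1 \square \mu_2)^{\boxplus 1/2}$ is again symmetric and $\boxplus$-infinitely divisible and identifies $\left((\mu_1 \square \mu_2)^{\boxplus N}\right)^{(x^2)}$ as $m \boxtimes \tilde{\sigma}_N^{\boxplus 2N}$ with $\tilde{\sigma}_N = m \boxtimes (\sigma_N^{(1)})^{\boxplus 1/N} \boxtimes (\sigma_N^{(2)})^{\boxplus 1/N}$. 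I would then repeatedly apply the dilation identity $(\mu^{\boxplus t}) \boxtimes (\nu^{\boxplus t}) = (\mu \boxtimes \nu)^{\boxplus t} \circ D_{1/t}$ of Remark~\ref{Remark:Boxplus-power} --- legitimate because free regular measures retain all their $\boxplus$-powers on $\mf{R}_+$ --- to pull every factor of $N$ out as a single overall dilation, exactly as in the telescoping proof of Proposition~\ref{Prop:Dot-product}.

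The upshot of this bookkeeping is that $\left((\mu_1 \square \mu_2)^{\boxplus N}\right)^{(x^2)}$ equals a dilation by a factor of order $1/N$ of $m \boxtimes (\bar{m}_N \boxtimes \Sigma_N)^{\boxplus 2}$, where $\Sigma_N = \sigma_N^{(1)} \boxtimes \sigma_N^{(2)}$ and $\bar{m}_N = \frac{1}{N} m^{\boxplus N}$ has mean one. To finish I would use the free law of large numbers $\bar{m}_N \to \delta_1$, the weak continuity of $\boxtimes$ and $\boxplus$ on measures supported on $\mf{R}_+$, and the convergence $\sigma_N^{(j)} \to \sigma^{(j)}$ (the measure attached to the limit $\mu_{X^{(j)}}$) obtained by deconvolving $(\mu_{X_N^{(j)}})^2 \to (\mu_{X^{(j)}})^2$ by $m$, to see that $m \boxtimes (\bar{m}_N \boxtimes \Sigma_N)^{\boxplus 2}$ stays tight. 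Dilating a tight family by a factor tending to zero produces $\delta_0$, which gives $(\mu_1 \square \mu_2)^{\boxplus N} \to \delta_0$, hence $S_N \to 0$ in distribution and, being self-adjoint, in probability.

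Finally, for \eqref{Eq:Product} I would split $X_{i,N}^{(1)} X_{i,N}^{(2)} = \tfrac{1}{2}\big[(X_{i,N}^{(1)} X_{i,N}^{(2)} + X_{i,N}^{(2)} X_{i,N}^{(1)}) + (X_{i,N}^{(1)} X_{i,N}^{(2)} - X_{i,N}^{(2)} X_{i,N}^{(1)})\big]$, so the product sum is half the anticommutator sum plus half the commutator sum. By the preceding Remark each $i(ab-ba)$ has the same symmetric distribution as $ab+ba$, and by the freeness of the pairs the $i$-scaled commutator sum has \emph{exactly} the distribution $(\mu_1 \square \mu_2)^{\boxplus N}$ already shown to converge to $\delta_0$; hence the commutator sum tends to $0$ in probability as well. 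Since convergence in measure is a vector-space topology (Proposition~\ref{proposition_probability}), the product sum tends to $0$ in probability. The step I expect to be the main obstacle is making the limit argument work without moment hypotheses: the free regular measures can be heavy-tailed, so the scaling reduction must be phrased through tightness and a bounded test function such as $x \mapsto x/(1+x)$ rather than through variances, and the weak continuity of the free regular ``square root'' $\mu_{X_N^{(j)}} \mapsto \sigma_N^{(j)}$ needs to be established with care.
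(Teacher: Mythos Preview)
Your proposal is correct and follows essentially the same route as the paper's proof: both use the anticommutator--square identity $\bigl((\mu\square\nu)^{\boxplus 1/2}\bigr)^2=\mu^2\boxtimes\nu^2$, the decomposition $\mu^2=m\boxtimes\sigma$ for symmetric $\boxplus$-infinitely divisible $\mu$, the dilation identity~\eqref{Eq:Boxplus-power}, and the free law of large numbers for $m^{\boxplus N}\circ D_N\to\delta_1$, and both finish the product case by combining the anticommutator and commutator sums. The only differences are cosmetic: the paper works directly with $(\mu_1\square\mu_2)^{\boxplus N/2}$ and its associated free regular measure via the cumulant transform, arriving at $(m^{\boxplus N}\circ D_N)\boxtimes\bigl((\sigma_{1,N}\boxtimes\sigma_{2,N})\circ D_N\bigr)$, whereas you carry an extra factor of $m\boxtimes(\cdot)^{\boxplus 2}$ from squaring the full $\boxplus N$ power; and you are more explicit than the paper about the tightness of $\sigma_N^{(1)}\boxtimes\sigma_N^{(2)}$ needed to conclude that the dilated family tends to $\delta_0$ (the paper leaves this implicit).
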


\begin{proof}
Denote by $\mu_{j, N}$ the distribution of $X^{(j)}_N$. Using the preceding remark, we may write
\[
\mu_{j, N}^2 = m \boxtimes \sigma_{j, N},
\]
where $\sigma_{j, N}$ is a free regular measure, such that
\[
\mc{C}_{\mu_{j, N}}(z) = \mc{C}_{\sigma_{j, N}}(z^2).
\]
Note that
\[
\mc{C}_{\mu_{j, N}^{\boxplus (1/N)}}(z) = \frac{1}{N} \mc{C}_{\mu_{j, N}}(z) = \frac{1}{N} \mc{C}_{\sigma_{j, N}}(z^2) = \mc{C}_{\sigma_{j, N}^{\boxplus (1/N)}}(z^2).
\]
Thus
\[
\left( \mu_{j, N}^{\boxplus (1/N)} \right) ^2 = m \boxtimes \sigma_{j, N}^{\boxplus (1/N)}.
\]
Next,
\[
\begin{split}
\left( \left(\mu_{1, N}^{\boxplus (1/N)} \square \mu_{2, N}^{\boxplus (1/N)} \right)^{\boxplus (1/2)} \right)^2
& = \left( \mu_{1, N}^{\boxplus (1/N)} \right)^2 \boxtimes \left( \mu_{2, N}^{\boxplus (1/N)} \right)^2 \\
& = m \boxtimes \sigma_{1, N}^{\boxplus (1/N)} \boxtimes m \boxtimes \sigma_{2, N}^{\boxplus (1/N)}.
\end{split}
\]
Therefore
\[
\mc{C}_{\left(\mu_{1, N}^{\boxplus (1/N)} \square \mu_{2, N}^{\boxplus (1/N)} \right)^{\boxplus (N/2)}}(z)
= N \mc{C}_{\sigma_{1, N}^{\boxplus (1/N)} \boxtimes m \boxtimes \sigma_{2, N}^{\boxplus (1/N)}}(z^2).
\]
Applying the relation~\eqref{Eq:Boxplus-power} twice and distributing the dilation, we get
\[
\begin{split}
\left( \sigma_{1, N}^{\boxplus (1/N)} \boxtimes m \boxtimes \sigma_{2, N}^{\boxplus (1/N)} \right)^{\boxplus N}
& = \left( \sigma_{1, N} \boxtimes m^{\boxplus N} \boxtimes \sigma_{2, N} \right) \circ D_{N^2} \\
& = \left( m^{\boxplus N} \circ D_{N} \right) \boxtimes \left( (\sigma_{1, N} \boxtimes \sigma_{2, N}) \circ D_{N} \right).
\end{split}
\]
Using the (noncommutative) law of large numbers, or by a direct calculation, $m^{\boxplus N} \circ D_{N} \rightarrow \delta_1$, so these measures converge to $\delta_0$ weakly. Therefore their free cumulant transforms converge to zero pointwise, which implies that
\[
\left(\mu_{1, N}^{\boxplus (1/N)} \square \mu_{2, N}^{\boxplus (1/N)} \right)^{\boxplus (N/2)} \rightarrow \delta_0.
\]

Since the same convergence in probability holds for the commutators
\[
\sum_{j=1}^N i \left(X_{j, N}^{(1)} X_{j, N}^{(2)} - X_{j, N}^{(2)} X_{j, N}^{(1)} \right),
\]
it holds for their linear combination \eqref{Eq:Product}.
\end{proof}

\begin{proof}[Proof of Theorem~\ref{Thm:Quadratic-Cubic}]
Let
\begin{multline*}
X(t)=\eta t \mathbf{1}_{\mathcal{A}^{0}}+ \sqrt{a} S(t) \\
+\lim_{\epsilon\searrow 0} \Bigl(\int_{(0,t]\times \set{|x|>\epsilon}}xdM(t,x)
-\int_{(0,t]\times \set{\epsilon<|x|\leqslant 1}}x ((\mathrm{Leb} \otimes \rho)(dt,dx)\mathbf{1}_{\mathcal{A}^{0}}) \Bigr).
\end{multline*}
Fix $\alpha \in (0, 1)$. Denote
\begin{multline*}
X'(t)= \left( \eta - \int_{\set{\alpha \leqslant |x|\leqslant 1}}x \,\rho(dx) \right) t \mathbf{1}_{\mathcal{A}^{0}}+ \sqrt{a} S(t) \\
+ \lim_{\epsilon\searrow 0} \Bigl(\int_{(0,t]\times \set{\eps < |x| < \alpha}}xdM(t,x)
-\int_{(0,t]\times \set{\epsilon<|x| < \alpha}}x ((\mathrm{Leb} \otimes \rho)(dt,dx)\mathbf{1}_{\mathcal{A}^{0}}) \Bigr).
\end{multline*}
and
\[
X''(t)=
\int_{(0,t]\times \set{|x| \geqslant \alpha}}xdM(t,x).
\]
Note that $\set{X''(t)}$ is an (unbounded) free compound Poisson process, $X(t) = X'(t) + X''(t)$, $\set{X'(t)}$ and $\set{X''(t)}$ are free from each other, and all of their distributions are $\boxplus$-infinitely divisible and symmetric. Then
\[
\sum_{i=1}^{[N t]} X_{i, N}^2 = \sum_{i=1}^{[N t]} \left(X_{i, N}' \right)^2 + \sum_{i=1}^{[N t]} \left(X_{i, N}'' \right)^2 + \sum_{i=1}^{[N t]} \left(X_{i, N}' X_{i, N}'' + X_{i, N}'' X_{i, N}' \right).
\]
By Theorem~\ref{Thm:Compound-Poisson}, the second term converges to $\left( X'' \right)^{(2)}(t)$ in probability. By Proposition~\ref{Prop:Dot-product-2}, the third term converges to zero in probability. By Theorem~\ref{Thm:Variations-distribution}, for fixed $\alpha$, the first term converges in distribution to
\[
\left( X' \right)^{(2)}(t) = a t \textbf{1}_{\mathcal{A}}+\int_{(0,t]\times (-\alpha, \alpha)}x^2 dM(t,x).
\]
Finally, as $\alpha \rightarrow 0$, $\left( X' \right)^{(2)}(t) \rightarrow a t \textbf{1}_{\mathcal{A}}$ in probability. Thus, given $\eps, \delta > 0$, we may choose $\alpha$ small so that $\left( X' \right)^{(2)}(t) - a t \textbf{1}_{\mathcal{A}} \in \mc{N}(\eps, \delta)$. Then for sufficiently large $N$, $\sum_{i=1}^{[N t]} \left(X_{i, N}' \right)^2 - a t \textbf{1}_{\mathcal{A}} \in \mc{N}(\eps, \delta)$ and
\[
\sum_{i=1}^{[N t]} X_{i, N}^2 - \left( X'' \right)^{(2)}(t) - a t \textbf{1}_{\mathcal{A}} \in \mc{N}(\eps, \delta).
\]
It remains to note that also
\[
X^{(2)}(t) - \left( X'' \right)^{(2)}(t) - a t \textbf{1}_{\mathcal{A}}
= \left( X' \right)^{(2)}(t) - a t \textbf{1}_{\mathcal{A}} \in \mc{N}(\eps, \delta). \qedhere
\]
\end{proof}

We finish this section with another possible definition of joint convergence in distribution. As already noted, for commuting variables, convergence in distribution of linear combinations is equivalent to joint convergence in distribution. As pointed out by {\'E}duard Maurel-Segala and Maxime Fevrier, this is not the case for non-commuting variables. However the following matricial version is its natural replacement. By the well-known linearization trick \cite{Haa-Thor-Ext} (see also Chapter 10 of \cite{Mingo-Speicher-book}), it implied the definition in the introduction; we do not know if they are in general equivalent. We show that convergence in probability implies joint convergence in this possibly stronger sense as well.

\begin{Defn}
\label{Defn:Joint-2}
Let
\[
\set{x_{i, N}: 1 \leqslant i \leqslant k, N \in \mf{N}} \cup \set{x_i : 1 \leqslant i \leqslant k} \subset (\tilde{\mc{A}}_{sa}, \tau).
\]
We say that $(x_{1, N}, \ldots, x_{k, N}) \rightarrow (x_1, \ldots x_k)$ jointly in distribution if for any $d$ and any Hermitian matrices $A_1, \ldots, A_k \in M_d(\mf{C})$, and any $B \in M_d(\mf{C})$ with $\Im B >  \eps I$ for some $\eps > 0$, the Cauchy transforms
\[
(I \otimes \tau) \left( B \otimes 1 - \sum_{i=1}^k A_i \otimes x_{i, N} \right)^{-1} \rightarrow (I \otimes \tau) \left( B \otimes 1 - \sum_{i=1}^k A_i \otimes x_{i} \right)^{-1}
\]
in norm in $M_N(\mf{C})$.
\end{Defn}

\begin{Prop}
If for each $i$, $x_{i, N} \rightarrow x_i$ in probability, then $(x_{1, N}, \ldots, x_{k, N}) \rightarrow (x_1, \ldots x_k)$ in the sense of Definition~\ref{Defn:Joint-2}.
\end{Prop}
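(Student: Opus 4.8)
The plan is to transfer everything into the amplified space $M_d(\mc{A}) = M_d(\mf{C}) \otimes \mc{A}$, equipped with the trace $\psi = \mathrm{tr}_d \otimes \tau$ and the matrix-valued conditional expectation $\Phi = I \otimes \tau$, and to reduce the matricial Cauchy transforms to resolvents of the single self-adjoint affiliated operators $Y_N = \sum_{i=1}^k A_i \otimes x_{i, N}$ and $Y = \sum_{i=1}^k A_i \otimes x_i$. Since $\Im B > \eps I$, writing $B \otimes 1 - Y_N = H_N + i K$ with $H_N = (\operatorname{Re} B) \otimes 1 - Y_N$ self-adjoint and $K = (\Im B) \otimes 1 \geqslant \eps$ bounded, the standard lower bound $\norm{(H_N + iK)\xi} \geqslant \eps \norm{\xi}$ (and the same for the adjoint) shows that $B \otimes 1 - Y_N$ and $B \otimes 1 - Y$ are invertible in the Murray--von Neumann algebra $\widetilde{M_d(\mc{A})}$, with inverses $R_N, R$ that are \emph{bounded}, $\norm{R_N}, \norm{R} \leqslant 1/\eps$. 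The target is then $\Phi(R_N) \to \Phi(R)$ in $M_d(\mf{C})$, and I would obtain it in two stages: first show $R_N - R \to 0$ in measure, then upgrade this to convergence of $\Phi$ using the uniform boundedness of the $R_N$.

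For the first stage I would use only the characterization of convergence in probability (that is, convergence in measure) through the sets $\mc{N}(\eps', \delta')$ from Proposition~\ref{proposition_probability}, together with two elementary compression facts: (i) if $e_N \to 0$ in measure and $c$ is a fixed bounded operator, then $e_N c \to 0$ and $c e_N \to 0$ in measure (compress on the appropriate side, via $\norm{(p e_N) c} \leqslant \norm{p e_N} \norm{c}$ and $\norm{c (e_N p)} \leqslant \norm{c} \norm{e_N p}$); and (ii) if $\sup_N \norm{c_N} < \infty$ and $d_N \to 0$ in measure, then $c_N d_N \to 0$ in measure (if $\norm{d_N q} < \eps'$ with $\psi[1 - q] < \delta'$, then $\norm{c_N d_N q} \leqslant (\sup_N \norm{c_N}) \eps'$). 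Since each $x_{i, N} - x_i \to 0$ in measure in $\mc{A}$, ampliation gives $I_d \otimes (x_{i, N} - x_i) \to 0$ in measure in $M_d(\mc{A})$ (use $P = I_d \otimes p$, for which $\psi[I - P] = \tau[1 - p]$), and then by (i) with $c = A_i \otimes 1$ and summing, $Y_N - Y \to 0$ in measure.

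Next I would invoke the resolvent identity $R_N - R = R_N (Y_N - Y) R$, valid as an identity of affiliated operators because in the algebra $\widetilde{M_d(\mc{A})}$ one may regroup $R_N[(B \otimes 1 - Y) - (B \otimes 1 - Y_N)]R$ using $R_N(B \otimes 1 - Y_N) = I$ and $(B \otimes 1 - Y)R = I$; passing to the measurable-operator algebra is exactly what avoids the domain subtleties of the unbounded $Y_N, Y$. By fact (i), $(Y_N - Y) R \to 0$ in measure, and then by fact (ii) with $c_N = R_N$ (uniformly bounded by $1/\eps$), $R_N - R = R_N \big[ (Y_N - Y) R \big] \to 0$ in measure.

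For the final upgrade, note that $R_N - R$ is uniformly bounded, $\norm{R_N - R} \leqslant 2/\eps$, and converges to $0$ in measure; splitting $\psi(\abs{R_N - R})$ over the spectral projection $\chi_{(\eta, \infty)}(\abs{R_N - R})$ (whose trace tends to $0$) gives $\norm{R_N - R}_{1} = \psi(\abs{R_N - R}) \to 0$. Each entry of $\Phi(R_N - R)$ equals $\tau\big[(R_N - R)_{ab}\big] = d\, \psi\big((E_{ba} \otimes 1)(R_N - R)\big)$, hence is bounded in modulus by $d \norm{R_N - R}_1 \to 0$; since $M_d(\mf{C})$ is finite dimensional this yields $\Phi(R_N) \to \Phi(R)$ in norm, as required. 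The main obstacle is precisely that we have no moment or $L^p$ control on the unbounded differences $Y_N - Y$, only convergence in measure, so a direct Cauchy--Schwarz/trace-norm estimate on $R_N(Y_N - Y)R$ is unavailable; the device that makes the argument go through is to keep the unbounded operator sandwiched between the uniformly bounded resolvents and to exploit boundedness of $R_N - R$ only at the very end, where convergence in measure of a bounded sequence does force $L^1$ (hence entrywise trace) convergence.
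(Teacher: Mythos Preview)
Your proof is correct and follows essentially the same strategy as the paper: both use the uniform resolvent bound $\norm{(B\otimes 1 - Y_N)^{-1}} \leqslant 1/\eps$, the resolvent identity $R_N - R = R_N(Y_N - Y)R$, and the projection characterization of convergence in probability to show that $R_N - R$ is small modulo a projection of small trace, and then exploit the uniform boundedness of $R_N - R$ to pass to convergence of the traces. The only difference is packaging---you work in the measure topology of $M_d(\tilde{\mc{A}})$ and upgrade to $L^1$ at the end, while the paper keeps explicit projections $I\otimes p_N$, $q_N$ throughout (implicitly via the one-sided compression Lemma~\ref{Lemma:One-sided-compression}) and invokes Proposition~2.19 of \cite{BNTSelf} entrywise for the final step.
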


\begin{proof}
The argument in Proposition~2.19 in \cite{BNTSelf} largely goes through; we outline it for the reader's convenience. Note first that for $X \in M_d(\tilde{\mc{A}}_{sa})$,
\[
\norm{(B \otimes 1 - X)^{-1}} \leqslant \norm{(\Im B)^{-1}},
\]
and in particular this operator is bounded. By the resolvent identity,
{\tiny
\begin{multline*}
\left( B \otimes 1 - \sum_{i=1}^k A_i \otimes x_{i, N} \right)^{-1} - \left( B \otimes 1 - \sum_{i=1}^k A_i \otimes x_{i} \right)^{-1} \\
= \left( B \otimes 1 - \sum_{i=1}^k A_i \otimes x_{i, N} \right)^{-1} \left(\sum_{i=1}^k A_i \otimes x_{i} - \sum_{i=1}^k A_i \otimes x_{i, N} \right) \left( B \otimes 1 - \sum_{i=1}^k A_i \otimes x_{i} \right)^{-1}.
\end{multline*}
}
By assumption and a short argument, for any $\eps, \delta > 0$ there is an $n$ such that for $N \geqslant n$, there is a projection $p_N$ with $\tau[p_N] > 1 - \delta$ and
\[
\norm{\left(\sum_{i=1}^k A_i \otimes x_{i} - \sum_{i=1}^k A_i \otimes x_{i, N} \right) (I \otimes p_N)} < \eps \sum_{i=1}^k \norm{A_i}.
\]
Thus for some projection $q_N$ with the same property,
\begin{multline*}
\norm{\left(\left( B \otimes 1 - \sum_{i=1}^k A_i \otimes x_{i, N} \right)^{-1} - \left( B \otimes 1 - \sum_{i=1}^k A_i \otimes x_{i} \right)^{-1} \right) (I \otimes q_N)} \\
\leqslant \eps \norm{(\Im B)^{-1}}^2 \sum_{i=1}^k \norm{A_i}.
\end{multline*}
In particular, the same estimate holds on each matrix entry on the left-hand side. Applying the rest of the argument from Proposition~2.19 in \cite{BNTSelf} entry-wise, it follows that
\[
(I \otimes \tau) \left[ \left( B \otimes 1 - \sum_{i=1}^k A_i \otimes x_{i, N} \right)^{-1} - \left( B \otimes 1 - \sum_{i=1}^k A_i \otimes x_{i} \right)^{-1} \right] \rightarrow 0.
\]
\end{proof}

\appendix

\section{Symmetric polynomials in non-commuting variables}

Symmetric functions in non-commuting variables (not to be confused with non-commutative symmetric functions) have been considered in \cite{Rosas-Sagan-Symmetric-nc,Bergeron-Rosas-Invariants-nc} and subsequent work. We need the following observation, whose explicit statement we could not find in the literature.

\begin{Prop}
Let $p_k = \sum_{i=1}^N x_i^k$ be the basic power sum symmetric polynomials. In the algebra of non-commutative polynomials $\mf{C} \langle x_1, \ldots, x_N \rangle$, the subalgebra generated by $\set{p_k: k \geqslant 1}$ is the linear span of polynomials
\[
P_{\mb{u}}(\mb{x}) = \sum_{\substack{i(1), i(2), \ldots = 1 \\ \text{neighbors distinct}}}^N x_{i(1)}^{u(1)} x_{i(2)}^{u(2)} \ldots
\]
for all choices of $\mb{u}$ with coordinates $u(i) \geqslant 1$. Note that these polynomials are obviously linearly independent. In particular, the elementary symmetric functions
\[
e_k = \sum_{\substack{i(1) \neq i(2) \neq \ldots \neq i(r) \\ i(1) + i(2)+ \ldots + i(r) = k}} x_{i(1)} x_{i(2)} \ldots x_{i(r)}
\]
are not in this subalgebra for $k > 1$.
\end{Prop}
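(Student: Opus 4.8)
The plan is to set up the \emph{reduced-word} basis of $\mathbb{C}\langle x_1, \ldots, x_N\rangle$: every nonzero monomial is uniquely $x_{i(1)}^{u(1)} \cdots x_{i(m)}^{u(m)}$ with all $u(j) \geqslant 1$ and $i(j) \neq i(j+1)$, and I call $\mathbf{u} = (u(1), \ldots, u(m))$ its \emph{shape}. The one observation driving everything is that $P_{\mathbf{u}}$ is exactly the sum, with every coefficient equal to $1$, of the reduced words of shape $\mathbf{u}$, and that distinct shapes index disjoint families of reduced words. This already yields the asserted linear independence, and it splits the proposition into two inclusions plus a support argument for $e_k$.

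For both inclusions the engine is the product rule
\[
P_{\mathbf{u}}\, P_{\mathbf{v}} = P_{\mathbf{u} \cdot \mathbf{v}} + P_{\mathbf{u} \odot \mathbf{v}},
\]
where, writing $r$ and $s$ for the lengths of $\mathbf{u}$ and $\mathbf{v}$, $\mathbf{u} \cdot \mathbf{v}$ is plain concatenation and $\mathbf{u} \odot \mathbf{v} = (u(1), \ldots, u(r-1), u(r) + v(1), v(2), \ldots, v(s))$ is concatenation with the two inner entries fused. I would prove this by expanding the product as a double sum over neighbor-distinct index sequences and splitting on whether the gluing indices $i(r)$ and $j(1)$ coincide: when they differ one recovers precisely the reduced words of shape $\mathbf{u} \cdot \mathbf{v}$, and when they agree the exponents $u(r)$ and $v(1)$ add, producing precisely the reduced words of shape $\mathbf{u} \odot \mathbf{v}$, each once. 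Because $p_k = P_{(k)}$, this rule shows immediately that $\mathrm{span}\{P_{\mathbf{u}}\}$ is a subalgebra containing every $p_k$. For the reverse inclusion I would induct on the length $r$ of $\mathbf{u}$: the case $r = 1$ is $P_{(u)} = p_u$, and for $r \geqslant 2$ the instance
\[
P_{(u(1), \ldots, u(r-1))}\, p_{u(r)} = P_{\mathbf{u}} + P_{(u(1), \ldots, u(r-2),\, u(r-1) + u(r))}
\]
exhibits $P_{\mathbf{u}}$ as a difference of two elements already known to lie in the algebra generated by the $p_k$, namely the product on the left and the trailing term on the right, the latter of length $r - 1$.

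For the statement about $e_k$ I would argue by support. Every monomial of $e_k$ is a product of $k$ distinct variables, so $e_k$ is supported entirely in the single shape class $(1^k) = (1, \ldots, 1)$; hence, were $e_k$ in $\mathrm{span}\{P_{\mathbf{u}}\}$, it would have to be a scalar multiple of $P_{(1^k)}$, the uniform sum over that class. But for $k > 1$ the polynomial $e_k$ gives coefficient $1$ to the increasing word $x_1 x_2 \cdots x_k$ and coefficient $0$ to $x_2 x_1 x_3 \cdots x_k$, which is a legitimate reduced word of the very same shape, while $P_{(1^k)}$ weights the two equally. No scalar can reconcile this, so $e_k \notin \mathrm{span}\{P_{\mathbf{u}}\}$.

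The two inductions and the support count are routine; the one step that needs genuine care is the product rule, in particular the bookkeeping at the gluing point. There I must check that fusing $u(r)$ with $v(1)$ preserves neighbor-distinctness, since $i(r-1) \neq i(r) = j(1) \neq j(2)$, and that the two cases of the split set up exact bijections onto the reduced words of shapes $\mathbf{u} \odot \mathbf{v}$ and $\mathbf{u} \cdot \mathbf{v}$ with no accidental further fusion. I expect the main obstacle to sit here, though it is entirely combinatorial.
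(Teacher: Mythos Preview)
Your argument is correct and genuinely different from the paper's. The paper indexes both families by interval partitions of $n$: writing $Q_\sigma = \prod_{V \in \sigma} p_{|V|}$ and $P_\tau$ for your $P_{\mathbf{u}}$, it establishes the triangular relation $Q_\sigma = \sum_{\tau \geqslant \sigma} P_\tau$ and then invokes M{\"o}bius inversion on the lattice $\Int(n)$ to conclude that the spans coincide. Your product rule $P_{\mathbf{u}} P_{\mathbf{v}} = P_{\mathbf{u}\cdot\mathbf{v}} + P_{\mathbf{u}\odot\mathbf{v}}$ and the resulting length induction avoid the lattice machinery entirely and are more elementary; in particular the closure of $\mathrm{span}\{P_{\mathbf{u}}\}$ under multiplication is immediate from your rule, whereas in the paper it is implicit in the change of basis. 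The trade-off is that the paper's route hands you the explicit inversion formula (the alternating sum over interval partitions, used in the subsequent corollary) for free, while your recursion would have to be unwound to recover it. Your support argument for $e_k$ is also a useful addition, since the paper leaves that ``in particular'' clause to the reader; just note that it tacitly assumes $N \geqslant k$ so that the two witness words $x_1 x_2 \cdots x_k$ and $x_2 x_1 x_3 \cdots x_k$ actually exist.
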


\begin{proof}
Clearly the algebra generated by all $p_k$ is the span of all
\[
Q_{\mb{u}}(\mb{x}) = p_{u(1)}(\mb{x}) p_{u(2)}(\mb{x}) \ldots
= \sum_{i(1), i(2), \ldots = 1}^N x_{i(1)}^{u(1)} x_{i(2)}^{u(2)} \ldots,
\]
where the $i(j)$ are not necessarily distinct. Denote by $\Int(n)$ the interval partitions of $[n]$. Then we may re-index these polynomials as
\[
Q_{\pi}(\mb{x}) = \sum_{i(1), i(2), \ldots, i(r) = 1}^N \prod_{j=1}^r x_{i(j)}^{\abs{V_j}}
= \prod_{j=1}^r p_{\abs{V_j}}(\mb{x})
\]
for $\pi = \set{V_1, \ldots, V_r} \in \Int(n)$ for some $n$. For $\mb{u} \in [N]^r$, denote $\ker(\mb{u}) \in \Part(n)$ the partition such that $u(i) = u(j)$ if and only if $i, j$ lie in the same block of $\ker(\mb{u})$. Note that for $V \in \ker(\mb{u})$, the notation $u(V)$ is unambiguous. Also, for $\pi \in \Part(n)$, let $I(\pi)$ be the largest interval partition such that $I(\pi) \leqslant \pi$. Note that $I(\pi) = \tau$ if $\pi \geqslant \tau$ and if $V, V'$ are neighboring blocks of $\tau$, they lie in different blocks of $\pi$. Finally, for $\pi = \set{V_1, \ldots, V_r} \in \Int(n)$, denote
\[
P_\pi(\mb{x}) = \sum_{\substack{i(1), i(2), \ldots, i(r) = 1 \\ \text{neighbors distinct}}}^N \prod_{j=1}^r x_{i(j)}^{\abs{V_j}}.
\]
Then for $\sigma \in \Int(n)$,
\[
\begin{split}
Q_\sigma(\mb{x})
& = \sum_{\substack{\pi \in \Part(n) \\ \pi \geqslant \sigma}} \sum_{\mb{i} : \ker{\mb{i}} = \pi} \prod_{V \in \pi} x_{i(V)}^{\abs{V}}
= \sum_{\substack{\tau \in \Int(n) \\ \tau \geqslant \sigma}} \sum_{\substack{\pi \in \Part(n) \\ I(\pi) = \tau}} \sum_{\mb{i} : \ker{\mb{i}} = \pi} \prod_{V \in \pi} x_{i(V)}^{\abs{V}} \\
& = \sum_{\substack{\tau \in \Int(n) \\ \tau \geqslant \sigma}} \sum_{\substack{i(1), i(2), \ldots, i(\abs{\tau}) = 1 \\ \text{neighbors distinct}}}^N \prod_{V \in \tau} x_{i(V)}^{\abs{V}}
= \sum_{\substack{\tau \in \Int(n) \\ \tau \geqslant \sigma}} P_\tau(\mb{x}).
\end{split}
\]
Then by M{\"o}bius inversion on the lattice $\Int(n)$, the spans of $\set{Q_\pi}$ and of $\set{P_\pi}$ are the same.
\end{proof}

\begin{Cor}
\label{Cor:Distinct}
In the notation of the preceding proof,
\[
P_\sigma = \sum_{\substack{\pi \in \Int(n) \\ \pi \geqslant \sigma}} (-1)^{\abs{\sigma} - \abs{\pi}} \prod_{V \in \pi} p_{\abs{V}}(\mb{x}).
\]
In particular,
\[
\sum_{\substack{i(1), i(2), \ldots, i(n) = 1 \\ \text{neighbors distinct}}}^N \prod_{j=1}^n x_{i(j)}
= \sum_{\pi \in \Int(n)} (-1)^{n - \abs{\pi}} \prod_{V \in \pi} p_{\abs{V}}(\mb{x}).
\]
\end{Cor}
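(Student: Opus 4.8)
The identity to invert is the relation $Q_\sigma(\mb{x}) = \sum_{\tau \in \Int(n),\, \tau \geqslant \sigma} P_\tau(\mb{x})$ established in the proof of the preceding proposition. The plan is to apply M{\"o}bius inversion on the lattice $\Int(n)$, whose M{\"o}bius function turns out to be especially simple.

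First I would record the order structure of $\Int(n)$. An interval partition of $[n]$ is determined by its set of cut points among the $n-1$ gaps between consecutive integers, so $\pi \mapsto \mathrm{cuts}(\pi)$ is a bijection of $\Int(n)$ onto the Boolean lattice of subsets of $\set{1, \ldots, n-1}$. Moreover $\sigma \leqslant \tau$ in the refinement order holds exactly when $\mathrm{cuts}(\tau) \subseteq \mathrm{cuts}(\sigma)$, so this bijection is order-reversing, and $\abs{\pi} = \abs{\mathrm{cuts}(\pi)} + 1$. Transporting the M{\"o}bius function $(-1)^{\abs{B} - \abs{A}}$ of the Boolean lattice through this order-reversing bijection (which interchanges the two arguments), and noting that the two $+1$ terms cancel, gives $\mu_{\Int(n)}(\sigma, \tau) = (-1)^{\abs{\sigma} - \abs{\tau}}$ whenever $\sigma \leqslant \tau$.

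With this in hand, M{\"o}bius inversion applied to $Q_\sigma = \sum_{\tau \geqslant \sigma} P_\tau$ yields $P_\sigma = \sum_{\tau \geqslant \sigma} (-1)^{\abs{\sigma} - \abs{\tau}} Q_\tau = \sum_{\tau \geqslant \sigma} (-1)^{\abs{\sigma} - \abs{\tau}} \prod_{V \in \tau} p_{\abs{V}}(\mb{x})$, which is the first displayed formula after renaming $\tau$ to $\pi$. The second formula is the special case where $\sigma$ is the partition of $[n]$ into singletons: then $\abs{\sigma} = n$, the polynomial $P_\sigma$ is precisely the sum over tuples with distinct neighbors of $x_{i(1)} \cdots x_{i(n)}$, and the constraint $\pi \geqslant \sigma$ is vacuous since $\sigma$ is the bottom element of $\Int(n)$.

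The combinatorial heart of the argument --- expressing each $Q_\sigma$ as a sum of the $P_\tau$ --- was already carried out in the preceding proposition, so I anticipate no real obstacle here. The one point requiring care is the identification of the M{\"o}bius function, and even that reduces to the standard Boolean-lattice computation once the order-reversing bijection via cut points is observed.
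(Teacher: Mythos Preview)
Your proposal is correct and follows the same approach as the paper: M{\"o}bius inversion on $\Int(n)$ applied to the relation $Q_\sigma = \sum_{\tau \geqslant \sigma} P_\tau$ from the preceding proof, followed by specialization to $\sigma = \hat{0}_n$. The paper simply asserts the M{\"o}bius function $\mathrm{M\ddot{o}b}(\sigma,\pi) = (-1)^{\abs{\sigma}-\abs{\pi}}$ without justification, whereas you supply the standard Boolean-lattice argument via cut points; this extra detail is welcome but does not constitute a different route.
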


\begin{proof}
The first statement follows by M{\"o}bius inversion, since the M{\"o}bius function on the lattice $\Int(n)$ is $\mathrm{M\ddot{o}b}(\sigma, \pi) = (-1)^{\abs{\sigma} - \abs{\pi}}$. The second statement follows from the fact that the left-hand side is $P_{\hat{0}_n}(\mb{x})$.
\end{proof}

\normalsize

\def\cprime{$'$} \def\cprime{$'$}
\providecommand{\bysame}{\leavevmode\hbox to3em{\hrulefill}\thinspace}
\providecommand{\MR}{\relax\ifhmode\unskip\space\fi MR }
% \MRhref is called by the amsart/book/proc definition of \MR.
\providecommand{\MRhref}[2]{%
  \href{http://www.ams.org/mathscinet-getitem?mr=#1}{#2}
}
\providecommand{\href}[2]{#2}

\end{document}